\documentclass{amsart}
\usepackage{comment}
\usepackage{hyperref}
\usepackage{amssymb}
\usepackage{tikz-cd}
\usepackage[T1]{fontenc}
\usepackage[utf8]{inputenc}
\usepackage[english]{babel}

\usepackage{graphicx}%
\usepackage{multirow}%
\usepackage{amsmath,amssymb,amsfonts}%
\usepackage{amsthm}%
\usepackage{mathrsfs}%
\usepackage[title]{appendix}%
\usepackage{xcolor}%
\usepackage{textcomp}%
\usepackage{manyfoot}%
\usepackage{booktabs}%
\usepackage{algorithm}%
\usepackage{algorithmicx}%
\usepackage{algpseudocode}%
\usepackage{listings}%

\newcommand{\gi}{H^1_{0,g}(\Omega)}
\newcommand{\numberset}{\mathbb}
\newcommand{\N}{\numberset{N}}
\newcommand{\Z}{\numberset{Z}}
\newcommand{\R}{\numberset{R}}

\newcommand{\vp}{\varphi}
\newcommand{\oo}{\Omega}

\theoremstyle{plain} 
\newtheorem{thm}{Theorem}[section] 
\newtheorem{cor}[thm]{Corollary} 
\newtheorem{lem}[thm]{Lemma} 
\newtheorem{prop}[thm]{Proposition}

\theoremstyle{remark} 
\newtheorem{oss}[thm]{Remark}

\title[An isoperimetric inequality for twisted eigenvalues]{An isoperimetric inequality for twisted eigenvalues with one orthogonality constraint}
\author{Emanuele Salato}
\author{Davide Zucco}
\address[Emanuele Salato and Davide Zucco]{Dipartimento di Matematica ``G. Peano'', Università di Torino, Via Carlo Alberto 10, Torino, 10123, Italy}
\address[Emanuele Salato]{Dipartimento di Scienze Matematiche, Politecnico di Torino, Corso Duca degli Abruzzi 24, Torino, 10129, Italy.}
\address[Emanuele Salato]{Laboratoire de Mathématiques, Université Savoie Mont Blanc, UMR CNRS 5127, Campus Scientifique, Le Bourget-du-Lac, 73376, France.}
\email{emanuele.salato@unito.it}
\email{davide.zucco@unito.it}

\begin{document}

\begin{abstract}
We consider \emph{twisted} eigenvalues $\lambda_{1}^{g}(\oo)$, defined as the minimum of the Rayleigh quotient of functions in $H^1_{0}(\oo)$ that are
\emph{orthogonal} to a {given} function $g\in  L^2_\text{loc}(\mathbb R^d)$. 
We prove an \emph{isoperimetric inequality} for $\lambda_1^g(\Omega)$, which provides a uniform bound on twisted eigenvalues -- not only with respect to the  set  $\Omega$ (an open bounded set of $\mathbb R^d$) -- but also in relation to the orthogonality function $g$. 
Remarkably, the lower bound is \emph{uniquely} attained when $\Omega$ is the \emph{union of two disjoint balls} of specific radii, and when the function $g$ in the orthogonality constraint is of \emph{bang-bang} type, i.e., constant on each ball. As a consequence, we obtain a \emph{continuous} 1-parameter family of optimal sets -- each being the {union of two disjoint balls} -- that interpolates between the optimal shapes of the first two Dirichlet eigenvalues of the Laplacian.

This new isoperimetric inequality offers fresh perspectives on well-established results, such as the Hong-Krahn-Szeg{o} and the Freitas-Henrot inequalities.  Notably, only for these two particular inequalities our proof avoids reliance on Bessel functions, suggesting potential extensions to nonlinear settings. However, extending the inequalities to the general case requires proof strategies that rely on properties of Bessel functions.
\end{abstract}

\maketitle

{\small

\noindent {\textbf{Keywords:} shape optimization, twisted eigenvalue, isoperimetric inequality, nonlocal problem}

\smallskip
\noindent{\textbf{MSC 2020:} 33C10; 35P05; 35P15; 49R05; 49Q10}
}

\section{Introduction}

A wide variety of problems, arising from both pure and applied mathematics, involve the optimization of quantities with respect to functions subject to appropriate \emph{orthogonality constraints}. A typical situation occurs with the so-called \emph{min-max principles} (\cite{ch,sw}), which provide variational characterizations of the eigenvalues of compact self-adjoint operators on Hilbert spaces: the $k$-th eigenvalue of such operators can be obtained by minimizing the Rayleigh quotient among functions that are orthogonal  to the $(k-1)$ eigenfunctions $u_1, u_2, . . ., u_{k-1}$ corresponding to the first $k-1$ eigenvalues. Of course, to compute the $k$-th eigenvalue in this way, the $k-1$ eigenfunctions must be known beforehand. Another classical problem, originally considered by Lagrange in the 1770s (\cite{eg, L}), deals with the optimal shape of a column, where one optimizes with respect to functions having mean value zero (i.e., those that are orthogonal to constant functions in the $L^2$ sense). Similar orthogonality constraints have more recently appeared in problems related to constant mean curvature immersions \cite{bb}, to intermediate problems \cite{sw} as well as to hydrostatics \cite{f}, where, in the latter, the orthogonality condition is with respect to all harmonic functions.

Since the orthogonality constraint is generally unrelated to the ordinary Dirichlet boundary conditions, this phenomenon in \cite{bb} has been referred to as \textit{twisted} (in French, \emph{tordu}; see \cite{ber}). The majority of the literature on the subject considers either  orthogonality to \emph{unknown} functions -- not specified a priori, such as eigenfunctions -- or to \emph{highly specific} ones, such as constants or harmonic functions. 
Our goal is to develop a systematic framework for twisted questions, where optimization problems are subject to orthogonality constraints with respect to a \emph{given}, \emph{generic} family of functions. In this initial paper, we focus on the case of a \emph{single} orthogonality constraint and introduce tools for deriving isoperimetric inequalities. These inequalities provide uniform bounds on twisted eigenvalues -- not only with respect to the shape -- but also in relation to the orthogonality constraint, offering new perspectives on some of the previously discussed works.

More precisely, given the dimension $d \in \N$ with $d\ge 2$, an open bounded set $\oo \subset \R^d$, and a function $g$ in $L^2_\text{loc}(\R^d)$ (possibly but not necessarily depending on $\Omega$) we consider \textit{the space of twisted functions} (vanishing at the boundary)
    \begin{equation*}
\gi:=g^\perp\cap H^1_{0}(\oo)=
\left\{ u \in H^1_{0}(\oo) \, : \, \int_\oo u(x)g(x)  dx=0 \right\},
    \end{equation*}
 defined, roughly speaking, as the orthogonal complement of a function $g\in L^2_\text{loc}(\mathbb R^d)$, with respect to the scalar product of $L^2(\mathbb R^d)$, within the Sobolev space $H^1_0(\Omega)$, or equivalently as the space of $H^1_0(\Omega)$ functions which are orthogonal in $L^2(\mathbb R^d)$ to the function $g$ (as usual, functions in $H^1_0(\Omega)$ are extended by $0$ outside $\Omega$). This space does not change if the function $g$ is modified on a set of measure zero. Moreover, the values attained by $g$ in $\R^d \setminus \Omega$ do not affect the definition of the space. We choose $g \in L^2_\text{loc}(\mathbb R^d)$, instead of $g \in L^2(\Omega)$, since in the following we will need to perform scalings and translations.
By definition, $\gi$ is a non-empty Hilbert space with the scalar product endowed by $H^1_{0}(\Omega)$.
The \emph{(first) twisted eigenvalue of $\Omega$} is then defined as
    \begin{equation}\label{lambda}
        \lambda_{1}^{g}(\oo)
        :=\min_{u\in  H^1_{0,g}(\Omega)\atop u\not\equiv 0}
        \frac{\int_\oo |\nabla u(x)|^2  dx}{\int_\oo  u(x)^2  dx}.
    \end{equation}
The minimum problem is well-posed (there always exists a minimizer) and since for every $s \in \R \setminus \{0\}$ the space $H^1_{0,sg}(\Omega)=\gi$ with $(sg)(x)=s\cdot g(x)$ for every $x\in \mathbb R^d$, by \eqref{lambda} there holds
\begin{equation}\label{invariance2}
\lambda_1^{sg}(\Omega)=\lambda_1^{g}(\Omega),
\end{equation}
namely the twisted eigenvalue is invariant under the \emph{outer} scalings $sg$ of the function $g$. A function $u\in \gi$ achieving the minimum in \eqref{lambda} is called (first) \emph{twisted eigenfunction corresponding to $\lambda_{1}^{g}(\oo)$} and it solves in a weak sense
    \begin{equation}\label{pde}
    \left\{
\begin{array}{ll}
-\Delta u= \lambda_{1}^{g}(\oo) u +\bigg( \dfrac{\int_{\oo} -\Delta u(x)g(x) dx}{\int_\Omega g(x)^2 dx} \bigg)\, g\quad & \textup{in } \oo,
\\ 
u=0 & \textup{on } \partial\Omega.
\end{array}
\right.
    \end{equation}
when $g \not\equiv 0$, while it solves $-\Delta u= \lambda_{1}^{g}(\oo) u$ in $\Omega$ with $u=0$ on $\partial \Omega$, when $g \equiv 0$ (i.e. $u$ is a Dirichlet eigenfunction). Notice that, the preceding relations can be rewritten and unified as follows
    \begin{equation*}
    \left\{
        \begin{array}{ll}
(\text{Id}-P^g_\Omega)(-\Delta u)= \lambda_{1}^{g}(\oo) u\quad & \textup{in } \oo,
\\ 
u=0 & \textup{on } \partial\Omega,
\end{array}
\right.
    \end{equation*}
    where $\text{Id}$ and $P^g_\Omega$ denote the identity operator and the $L^2(\Omega)$ orthogonal projection onto the subspace $\text{span}(g|_\Omega)$, respectively.
    Due to the presence of an average of the Laplacian over $\oo$, equations like \eqref{pde} are referred to as \textit{non-local} (see, for instance, \cite[Introduction]{bhmp} and the references therein). The number of twisted eigenfunctions corresponding to the same twisted eigenvalue $\lambda_{1}^{g}(\oo)$ is called the \textit{multiplicity} of $\lambda_{1}^{g}(\oo)$. If the multiplicity is one, the eigenvalue is \emph{simple}; if it is two, the eigenvalue is \emph{double}, and so on. In general, $\lambda_{1}^{g}(\oo)$ can be \emph{multiple}, meaning that it is not simple (for instance, if $\oo$ is a disk in $\R^2$ and $g \in \{ 1, u_1 \}$, then $\lambda_{1}^{g}(\oo)$ is double, see \cite[Example 3]{bb}).
    
For specific choices of $g$, twisted eigenvalues \eqref{lambda} reduce to well-known quantities.
\begin{itemize}
        \item If $g\in \{ 0, \{u_k \}_{k \ge 2} \}$ with $u_k$ a \emph{Dirichlet eigenfunction corresponding to the $k$-th eigenvalue} of $\oo$, then 
        \begin{equation}\label{g.FK}
            \lambda_{1}^{g}(\oo)
            =\min_{{u\in  H^1_{0,g}(\Omega)\atop u\not\equiv 0}}\frac{\int_\oo |\nabla u(x)|^2  dx}{\int_\oo  u(x)^2  dx}
        =\lambda_{1}(\oo) ,
        \end{equation}
        where $\lambda_{1}(\oo)$ is the \emph{first Dirichlet eigenvalue} of $\oo$ (see \cite[Chapter~1, Theorem~1]{sw}). When $\Omega$ is connected it is always simple (see \cite[Theorem 6.34]{bor}).
        \item If $g=u_1$ with $u_1$ a \emph{Dirichlet eigenfunction corresponding to the first eigenvalue}  of $\oo$, then
        \begin{equation*}
            \lambda_{1}^{u_1}(\oo)
            =\min_{u\in H^1_{0,u_1}(\Omega)\atop u\not\equiv 0} \frac{\int_\oo |\nabla u(x)|^2  dx}{\int_\oo  u(x)^2  dx}
        =\lambda_{2}(\oo) ,
        \end{equation*}
        where $\lambda_{2}(\oo)$ is the \emph{second Dirichlet eigenvalue} of $\oo$ (see \cite[Chapter~1, Theorem~1]{sw}).
        \end{itemize}
        \begin{itemize}
        \item If $g\equiv1$ (i.e., if $g$ is the constant function identically equal to $1$), then
        \begin{equation}\label{g.t}
            \lambda_{1}^{1}(\oo)
            =\min_{{u\in H^1_{0,1}(\Omega)\atop u\not\equiv0}} \frac{\int_\oo |\nabla u(x)|^2  dx}{\int_\oo  u(x)^2  dx} 
        =\lambda_{1}^{T}(\oo) ,
        \end{equation}
        where $\lambda_{1}^{T}(\oo)$ is the \textit{first (standard) twisted eigenvalue} considered in \cite{bb,fh} (see in particular \cite[Proof of Proposition 2.2 (2)]{bb}).
    \end{itemize}
    
A natural question related to these eigenvalues, and one that is also relevant for the applications, is to determine \emph{uniform} bounds that are independent of both the function $g$ and the set $\Omega$. We start by focusing on the dependence on $g$ and notice that 
the first Dirichlet eigenvalue, can be characterized as the minimum of twisted eigenvalues 
 \begin{equation}\label{ming}
        \lambda_{1}(\oo)=\min_{g \in L^2_{\text{loc}}(\R^d)} \lambda_{1}^{g}(\oo),
    \end{equation}
while the second Dirichlet eigenvalue as the maximum of twisted eigenvalues
    \begin{equation}\label{maxg}
        \lambda_{2}(\oo)=\max_{g \in L^2_{\text{loc}}(\R^d)} \lambda_{1}^{g}(\oo),
    \end{equation}
    (see \cite[Chapter 3, Section 2]{sw}). In particular, for each $g\in L^2_\text{loc}(\mathbb R^d)$, twisted eigenvalues interlace between the first and second Dirichlet ones with
        \begin{equation}\label{interwinedTD}
            \lambda_{1}(\oo) 
            \le \lambda_{1}^{g}(\oo) 
            \le \lambda_{2}(\oo).
        \end{equation}
        
Two possible physical interpretations of these twisted eigenvalues are as follows. In Quantum Mechanics, the eigenvalues of the Dirichlet Laplacian correspond to the energy levels of a particle confined in $\Omega$ with infinite walls (i.e., an infinite potential barrier at the boundary), with the corresponding eigenfunctions representing the associated quantum states: $\lambda_1(\Omega)$ is the ground-state energy, while $\lambda_2(\Omega)$ is the first excited-state energy after the ground-state energy. In view of \eqref{interwinedTD} the eigenvalue $\lambda_1^g(\Omega)$ can be regarded as an intermediate energy where the orthogonality constraint required in \eqref{lambda} may be related to the so-called \emph{selection} and \emph{superselection rules} (see for instance the rule Ss1 and the relation (7.61) in \cite[Section 7.7]{mor}), which for various reasons (symmetries, external fields, etc...) certain transitions are forbidden and we know a priori that the wavefunction of the state must be orthogonal to some subspace: $\lambda_1^g(\Omega)$ may represent the minimum energy that such a state can achieve. Another interpretation can be found in \emph{stability theory} for symmetric solutions of dispersive equations (more specifically, standing waves and solitons for nonlinear Schrödinger equations, or even waves for Korteweg-de Vries-type equations), where one is interested in the study of the spectrum of differential operators on suitable Hilbert spaces that are constrained.  The constraint is typically expressed through an orthogonality with respect to a given set of functions and it is motivated by the fact that these functions form an element of the kernel, of another operator related to the same linearization of the energy, see for instance \cite{cm}. The theory was developed by M. Weinstein and independently by M. Grillakis, J. Shatah and W. Strauss in the late 1980s, and it was subsequently greatly expanded in the following years by numerous authors, with countless applications to linear and orbital stability theory, for more details see \cite[Section~5.2]{kp} (in particular equations (5.2.30) and (5.2.31)) and \cite[Section~4.2]{pel} (in particular equation (4.2.6)).

Now, returning to the mathematical discussion, notice that from \eqref{g.FK} there exist bounded minimizers of \eqref{ming}. The invariance under outer scalings \eqref{invariance2} allows us to rescale each bounded function in terms of its $\|\cdot\|_\infty$-norm, such that one may consider $|g(x)|\le 1$ for a.e. $x\in\mathbb R^d$. Consequently, the characterization \eqref{ming} still holds, with $L^2_{\text{loc}}(\mathbb R^d)$ replaced by the space of \emph{uniformly bounded} functions, see Proposition~\ref{p.sufficient}. After reviewing in Section~\ref{s.preliminary} some preliminary results on twisted eigenfunctions, we will prove in Section~\ref{sec.g}, see relation \eqref{infL+}, that the minimal value in \eqref{ming} can be achieved by working within the more restricted space of \emph{positive}, \emph{uniformly bounded} functions, which we denote by
$$L^\infty_+:=\{g\in L^\infty(\mathbb R^d):  0<g(x)\le 1 \text{ for a.e. } x\in \mathbb R^d\}.$$
Although the positivity constraint in the class above prevents the function $g$ from being identically zero and from being equal to any Dirichlet eigenfunction $u_k$ with $k\ge 2$, we still have
 \begin{equation}\label{infg}
        \lambda_{1}(\oo)=\inf_{g \in L^\infty_+} \lambda_{1}^{g}(\oo).
\end{equation}
Here the infimum is indeed not a minimum as it is no longer attained by any function in $L^\infty_+$. However, minimizing sequences do exists in $L^\infty_{+}$, converging to $\lambda_1(\Omega)$, and taking the form (the so-called \emph{bang-bang functions}, see below)
$$g_n:=\alpha_n\chi_{\Omega\setminus A_n} +\chi_{\R^d\setminus (\Omega\setminus A_n)},$$
for a suitable sequence of measurable sets $A_n$ and real numbers $\alpha_n$  such that  $|A_n|\to 0^+$ and $\alpha_n\to 0^+$ as $n\to\infty$ (see Propositions~\ref{p.necessary} and \ref{p.espilon} later on). 

To track the behaviour of minimizing sequences it is possible to restrict the problem within the smaller class of \emph{uniformly positive} and \emph{uniformly bounded} functions, defined for every $0<\alpha\le 1$ as the space
\begin{equation}\label{Lalpha}
L^\infty_\alpha:=\{g\in L^\infty(\mathbb R^d): \alpha \le g(x)\le 1 \text{ for a.e. $x\in \mathbb R^d$}\}.
\end{equation}
This class is closed under scalings. Other useful classes can be found in Remark~\ref{scalclass}.
A particular function in $L^\infty_\alpha$ is
 given by a \emph{bang-bang function}
$$\chi_\alpha=\alpha\chi_{\Omega^+} +\chi_{\mathbb R^d\setminus \Omega^+},$$
a function defined on two \emph{disjoint} regions (i.e., open bounded sets) $\Omega^+\cup \Omega^-$, which is $\alpha\in(0,1]$ in $\Omega^+$, 1 in $\Omega^-$ and extended by $1$ outside $\Omega^+\cup \Omega^-$. 
These functions will play a significant role throughout the paper and we use the notation $\chi_\alpha$ to denote them. These bang-bang functions are also of particular importance in Control Theory (see, for instance, \cite[Chapters 7, 8, and 9]{h} and the references therein).

As for the upper bound in \eqref{interwinedTD}, the situation is much simpler. When $\Omega$ is either a ball $B$ or the union of two disjoint balls $B^\wedge, B^\vee$ of equal measure, the maximization problem \eqref{maxg} is achieved by the constant function $g\equiv 1$. This follows from \eqref{g.t} with
\begin{equation*}
  \lambda_1^T(B)=\lambda_2(B)  \quad \text{and}\quad \lambda_{1}^{T}(B^\wedge \cup B^\vee)=\lambda_2(B^\wedge \cup B^\vee),
\end{equation*}
proved, respectively, in \cite[Example~3]{bb} and \cite[Corollary~2.3]{fh}.
Since $1\in L^\infty_\alpha$ for all $0<\alpha \le 1$, the second Dirichlet eigenvalue can be seen, in a sense, as a sharp upper bound for $\lambda_1^g(\Omega)$. Actually, in the case of two balls of equal measure, the twisted eigenvalue is independent of $g$. Indeed, when $\oo=B^\wedge \cup B^\vee$ then
$\lambda_1(B^\wedge\cup B^\vee)=\lambda_1(B^\wedge)=\lambda_2(B^\wedge\cup B^\vee)$ and
for every $g\in L^2_\text{loc}(\mathbb R^d)$, the bounds in \eqref{interwinedTD} yield
\begin{equation}\label{eq.ind}
\lambda_1^{g}(B^\wedge\cup B^\vee)=\lambda_1(B^\wedge\cup B^\vee)=\lambda_1(B^\wedge)=\lambda_2(B^\wedge\cup B^\vee).
\end{equation}

As for the shape dependence of $\lambda_1^g(\Omega)$ at a fixed $g$, let us move the set $\Omega$ in the class of bounded and open sets of $\mathbb R^d$, that we denote by $\mathcal O$. Twisted eigenvalues, similar to Dirichlet eigenvalues, possess valuable properties in dependence of the shape: they are \emph{monotone with respect to the set inclusion}
    \begin{equation*}
        \lambda_{1}^{g}(\oo_2) \le \lambda_{1}^{g}(\oo_1),\quad  \text{for all $\oo_1,\oo_2\in\mathcal O$  with $\oo_1 \subset \oo_2$},
    \end{equation*}
and they exhibit well-behaved \emph{scaling properties} 
    \begin{equation}\label{scaling}
        \lambda_{1}^{{g}}(s\oo)=\frac{\lambda_{1}^{g_s}(\oo)}{s^2}, \quad \text{for all $s\in\mathbb R\setminus\{0\}$ and $\oo\in\mathcal O$},
    \end{equation}
    where $g_s(x):=g(s\cdot x)$ for every $x\in \mathbb R^d$ are \emph{inner} scalings of the function $g$.
With the previous specific choices of $g$, let us recall the following well-known inequalities, where the cases of equality (the so-called \emph{rigidity})
should be understood up to sets of capacity zero (see, for instance, \cite[Section 3.3]{hp}).

\begin{itemize}
    \item The \emph{Faber-Krahn inequality} (see \cite[Theorem 3.2.1]{h}) states that
    \begin{equation}\label{FK}
        |\Omega|^\frac{2}{d}\lambda_1(\oo) \ge  |B|^\frac{2}{d}\lambda_1(B) \, ,
    \end{equation}
    where $B$ is any ball in $\mathbb R^d$. Equality holds if and only if $\oo=B$.
    \item The \emph{Hong-Krahn-Szeg{o} inequality}\footnote{This “isoperimetric” inequality was first proved by Edgar Krahn in the 1920s, but the result was later largely overlooked, since in 1955 George Pólya attributed the result to Peter Szego. However, almost in the same years as Pólya’s paper, a paper by Imsik Hong appeared, providing once again a proof of
    this result. It should be noted that Hong’s paper was published in 1954, just one year before Pólya’s. For this reason, we refer to \eqref{HKS} as the Hong-Krahn-Szego inequality. We thank Mark S. Ashbaugh for these historical information (further details, and references to these classical works, can be found in \cite{d} and \cite[p.6]{hp}).} (see \cite[Theorem 4.1.1]{h} or \cite[Theorem 6.4.1 and p.6]{hp}) states that
    \begin{equation}\label{HKS}
         |\Omega|^\frac{2}{d}\lambda_2(\oo) \ge  |B^\wedge \cup B^\vee|^\frac{2}{d}\lambda_2(B^\wedge \cup B^\vee) \, ,
    \end{equation}
    where $B^\wedge$ and $B^\vee$ are any disjoint balls in $\mathbb R^d$ of equal measure. Equality holds if and only if $\oo=B^\wedge \cup B^\vee$.
    \item The \emph{Freitas-Henrot inequality} (see \cite[Theorem 1]{fh}) states that
    \begin{equation}\label{FH}
        |\Omega|^\frac{2}{d} \lambda_{1}^{T}(\oo) \ge  |B^\wedge \cup B^\vee|^\frac{2}{d}\lambda_{1}^{T}(B^\wedge \cup B^\vee) \, ,
    \end{equation}
    where $B^\wedge$ and $B^\vee$ are any disjoint balls in $\mathbb R^d$ of equal measure. Equality holds if and only if $\oo=B^\wedge \cup B^\vee$.
\end{itemize}

According to the classical book \cite{posz51}, these inequalities are commonly referred to as \textit{isoperimetric}, even though no perimeter appears in them. We follow this convention also here in this paper. The presence of the measure inside \eqref{FK}, \eqref{HKS}, and \eqref{FH} ensures that the inequalities are scale-invariant. An equivalent formulation of these inequalities can be expressed as the minimization of eigenvalues over open sets subject to a \emph{measure constraint}.

While \eqref{FK} and \eqref{HKS} are well-known results, \eqref{FH} is more recent and some generalizations have been proven since then in \cite{bhmp, bdo, chp, gl}. All these works provide alternative proofs of \eqref{FH}, which, apart for \cite{chp}, are based on properties of Bessel functions. By allowing more nonlinear degrees of freedom, it has been found in \cite{n} that, for certain configurations, the optimal shape is not symmetric, given by a pair of non-equal balls. It is worth noting that, for a \emph{fixed} $g\in L^2_{\text{loc}}(\mathbb R^d)$, the minimization problem
\begin{equation}\label{p.finale}
\min_{\Omega\in \mathcal O} |\Omega|^{2/d} \lambda_1^g(\Omega)
\end{equation}
is not only a different problem, but it is not fully clear for which functions $g$ an optimal shape exists (this will be indeed the subject of a forthcoming paper; see also Corollary~\ref{c.nonexistence} at the end of the introduction).

Now, the equality \eqref{infg} also implies that a uniform lower bound (both with respect to the function $g$ and to the set $\Omega$) can be directly obtained \emph{via} \eqref{FK}. The situation becomes more interesting when one focuses on the double minimization problem 
\begin{equation}\label{problem}
\min_{\Omega\in \mathcal O}\min_{g \in L^\infty_\alpha} |\Omega|^\frac{2}{d}\lambda_{1}^{g}(\oo),
\end{equation}
with $0<\alpha\le 1$.
The first main result of the paper is an isoperimetric inequality for twisted eigenvalues: for every $0<\alpha\le 1$ the unique minimizer of \eqref{problem} is given by the union of two disjoint balls with orthogonality constraint of bang-bang type (see Figure~\ref{fig.opt} for some plots of the optimal shapes).

 \begin{thm}[Isoperimetric inequality for twisted eigenvalues]\label{t.main}
Fix $0< \alpha \le 1$. There exists a \emph{unique} number $m=m(\alpha)$, with $0< m(\alpha)\le 1$, such that for every set $\oo\in \mathcal O$ and every function $g \in L_\alpha^\infty$
        \begin{equation}\label{isoperimetric}
           |\Omega|^{\frac{2}{d}}\lambda_{1}^{g}(\oo)
            \ge |B_+^\alpha  \cup B_-^{\alpha}|^{\frac{2}{d}}\lambda_{1}^{\chi_\alpha}(B_+^{\alpha} \cup B_-^\alpha )=:\lambda(\alpha),
        \end{equation}
        where 
        $B_+^\alpha$ and $B_-^\alpha $ are any disjoint balls, $|B_-^\alpha |/|B_+^\alpha|=m(\alpha)$, 
and $\chi_\alpha=\alpha\chi_{B^{\alpha}_+} + \chi_{\mathbb R^d\setminus B^{\alpha}_+}$.
Equality holds in \eqref{isoperimetric}  if and only if $\oo=B_+^\alpha \cup B_-^\alpha $ and $g=\chi_\alpha$ a.e. in $B_+^\alpha\cup B_-^\alpha $.

Moreover, for the measure ratio $m(\alpha)$ of the optimal balls and for the minimal value $\lambda(\alpha)$ in \eqref{isoperimetric} the following estimates hold:
    \begin{equation}\label{dis.radius}
        m(\alpha) \ge \alpha^{\frac{d}{d-1}} \quad \text{and} \quad \lambda(\alpha)\ge \big(1+\alpha^\frac{d}{d-1}\big)^\frac{2}{d}|B|^\frac{2}{d}\lambda_1(B),
            \end{equation}
                    where $B$ is any ball in $\mathbb R^d$.

Finally, if $u_\alpha$ is a twisted eigenfunction corresponding to $\lambda_{1}^{\chi_\alpha}(B_+^\alpha  \cup B_-^\alpha )$, then the modulus of its gradient $|\nabla u_\alpha|$ is constant on the boundary $\partial B_+^\alpha  \cup \partial B_-^\alpha$  with
    \begin{equation}\label{eq.optimality}
|\nabla u_\alpha|^2={\frac{2\lambda_1^{\chi_\alpha}(B_+^\alpha  \cup B_-^\alpha )}{d|B_+^\alpha  \cup B_-^\alpha |}} \int_{B_+^\alpha  \cup B_-^\alpha } u_\alpha(x)^2 dx,\quad \text{ on $\partial B_+^\alpha  \cup \partial B_-^\alpha $} \, .
    \end{equation}
 \end{thm}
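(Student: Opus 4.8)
\medskip

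The plan is to prove \eqref{isoperimetric} directly for every admissible pair $(\oo,g)$ -- so that no a priori existence result for \eqref{problem} is needed -- by a chain of reductions, each one not increasing the scale-invariant quantity $|\oo|^{2/d}\lambda_1^g(\oo)$, until only a one-parameter family of configurations (two disjoint balls with $g=\chi_\alpha$) survives. The uniqueness, the rigidity statement, and the estimates \eqref{dis.radius}--\eqref{eq.optimality} are then extracted from this family.

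\emph{Steps 1--2 (reduction to two disjoint balls with $g$ constant on each piece).} Let $u$ be a first twisted eigenfunction of $\oo$ relative to $g\in L^\infty_\alpha$. By the preliminary results of Section~\ref{s.preliminary} -- the strict positivity of $g$ forces $u$ to change sign, and testing the nodal pieces of $u$ against $g$ in \eqref{lambda} (using \eqref{pde}) excludes two nodal domains of the same sign -- the function $u$ has \emph{exactly two} nodal domains $\oo_+=\{u>0\}$ and $\oo_-=\{u<0\}$. First replace $g$ by $\tilde g$ equal to the constants $g_\pm:=(\int_{\oo_\pm}ug)/(\int_{\oo_\pm}u)$ on $\oo_\pm$: the bounds $\alpha\le g\le1$ and the sign of $u$ on each $\oo_\pm$ give $\alpha\le g_\pm\le1$, so $\tilde g\in L^\infty_\alpha$, while $u$ is still admissible for $\tilde g$, whence $\lambda_1^{\tilde g}(\oo)\le\lambda_1^g(\oo)$. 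Then Schwarz-symmetrise $u|_{\oo_+}$ and $|u|_{\oo_-}$ onto disjoint balls $B_\pm$ with $|B_\pm|=|\oo_\pm|$: the resulting function (positive on $B_+$, negative on $B_-$) is admissible for $(B_+\cup B_-,\tilde g)$ because rearrangement preserves $\int u$ on each nodal domain and $\tilde g$ is constant there, its $L^2$-norm is unchanged, and by P\'olya--Szeg\H{o} its Dirichlet energy does not increase. Since the nodal set is negligible (by \eqref{pde}, using $g>0$), $|B_+\cup B_-|=|\oo|$, hence $|B_+\cup B_-|^{2/d}\lambda_1^{\tilde g}(B_+\cup B_-)\le|\oo|^{2/d}\lambda_1^g(\oo)$, with equality forcing $\oo_\pm$ to be balls via the rigidity of P\'olya--Szeg\H{o} and of the Faber--Krahn inequality \eqref{FK}.

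\emph{Step 3 (reduction to $\chi_\alpha$; the one-parameter optimisation).} We are left with two disjoint balls $B_\pm$ carrying constants $g_\pm$. By the scaling invariance \eqref{invariance2} only $\rho:=\max\{g_+,g_-\}/\min\{g_+,g_-\}$ matters, and $g\in L^\infty_\alpha$ means $1\le\rho\le1/\alpha$; writing $g=\chi_{B_+}+\rho\chi_{B_-}$ and differentiating the (simple) twisted eigenvalue in $\rho$, the envelope theorem gives $\partial_\rho\lambda_1^g=-2c\int_{B_-}u$, where $c$ is the nonlocal coefficient of \eqref{pde}. A maximum-principle analysis of \eqref{pde} on two balls shows $c<0$, with $c=0$ only when the balls are equal -- a case which, for \emph{every} $\rho$, attains the \emph{maximal} value $\lambda_2(B_+\cup B_-)$ by \eqref{maxg} and so is never the double minimum; hence for unequal balls $\partial_\rho\lambda_1^g<0$, $\rho=1/\alpha$ is optimal, and the double minimum coincides with $\min_m\Lambda(m)$, where $\Lambda(m):=|B_+\cup B_-|^{2/d}\lambda_1^{\chi_\alpha}(B_+\cup B_-)$ refers to two disjoint balls with $g=\chi_\alpha$ and volume ratio $m=|B_-|/|B_+|$. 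The function $\Lambda$ is continuous and tends to $|B|^{2/d}\lambda_2(B)$ as $m\to0^+$ (the constraint contribution of the vanishing ball becomes prohibitively expensive, pushing $u$ towards the second Dirichlet eigenfunction of the large ball), hence it stays strictly above its infimum near the endpoints and attains a minimum at some $m(\alpha)$; a convexity/monotonicity argument for $m\mapsto\log\Lambda(m)$ then yields uniqueness of $m(\alpha)$ and the bound $m(\alpha)\le1$, and this is precisely the point at which the explicit (Bessel) description of the two-ball eigenfunction can be replaced by a more robust variational identity. Tracking the equality cases back through Steps~1--2 gives the rigidity statement.

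\emph{Step 4 (optimality identity and estimates).} On a ball the eigenfunction is radial, so $|\nabla u_\alpha|$ is constant on each of $\partial B_+^\alpha$ and $\partial B_-^\alpha$; the Pohozaev (Rellich) identity for \eqref{pde} on $B_\pm^\alpha$ gives $|\nabla u_\alpha|^2|B_\pm^\alpha|=\tfrac{2\lambda}{d}\int_{B_\pm^\alpha}u_\alpha^2+\tfrac{d+2}{d}cg_\pm\int_{B_\pm^\alpha}u_\alpha$ (with $g_+=\alpha$, $g_-=1$). First-order optimality in the volume split (Hadamard's formula with the volume constraint) forces the two boundary values to coincide; adding the two identities, the orthogonality constraint $g_+\int_{B_+^\alpha}u_\alpha+g_-\int_{B_-^\alpha}u_\alpha=0$ cancels the $c$-terms and produces exactly \eqref{eq.optimality}. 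For \eqref{dis.radius}, the divergence theorem applied to \eqref{pde} yields the flux identities $\int_{\partial B_+^\alpha}|\nabla u_\alpha|=\lambda\int_{B_+^\alpha}u_\alpha+c\alpha|B_+^\alpha|$ and $\int_{\partial B_-^\alpha}|\nabla u_\alpha|=\lambda\bigl|\int_{B_-^\alpha}u_\alpha\bigr|-c|B_-^\alpha|$; dividing them, using that the common boundary value of $|\nabla u_\alpha|$ makes the ratio of the left-hand sides equal to $(r_+/r_-)^{d-1}$ (with $r_\pm$ the radii of $B_\pm^\alpha$), and using $c<0$ together with $\alpha\int_{B_+^\alpha}u_\alpha=\bigl|\int_{B_-^\alpha}u_\alpha\bigr|$, one gets $(r_+/r_-)^{d-1}<1/\alpha$, i.e.\ $m(\alpha)\ge\alpha^{d/(d-1)}$; the bound on $\lambda(\alpha)$ then follows from the interlacing $\lambda_1^{\chi_\alpha}\ge\lambda_1$ in \eqref{interwinedTD} together with $\alpha^{d/(d-1)}\le m(\alpha)\le1$. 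The main obstacle is Step~3: pinning down the sign of $c$ (equivalently, the strict monotonicity of the two-ball twisted eigenvalue in the ratio of the constants) and, above all, the uniqueness of $m(\alpha)$, a genuinely non-explicit one-dimensional optimisation requiring some quantitative control of the two-ball eigenfunction; by contrast the nodal-domain count of Step~1 and the rigidity bookkeeping of Step~2 are routine, provided one is careful about sets of zero capacity and about the equality case of P\'olya--Szeg\H{o}.
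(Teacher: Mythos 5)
Your overall scheme — symmetrization to two balls, reduction of the constraint to a bang-bang function, a one-parameter optimization in the volume ratio $m$, followed by shape-derivative/Pohozaev identities for the estimates — is the same architecture the paper uses (its Theorem~\ref{t.rearrangement} plus Steps~1--4 of Section~\ref{sec.4}). Several of your ingredients are close variants of what appears there: your replacement of $g$ by the two constants $g_\pm=\int_{\Omega_\pm}ug/\int_{\Omega_\pm}u$ is exactly the first half of Proposition~\ref{p.monotonicity2}; your flux/divergence argument for $m(\alpha)\ge\alpha^{d/(d-1)}$ matches Step~3; your claim that the common gradient modulus on the two spheres cancels the nonlocal term via the orthogonality is consistent with the Pohozaev argument of Proposition~\ref{p.pohozaev}. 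However, there are genuine gaps.

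The central gap is the \emph{uniqueness} of $m(\alpha)$, which you yourself flag at the end as "the main obstacle." The assertion that "a convexity/monotonicity argument for $m\mapsto\log\Lambda(m)$ then yields uniqueness" is not substantiated, and it is precisely at this point that the paper does use Bessel functions: the optimality condition is written as $\Phi(\omega R_-)=\alpha\,\Phi(\omega R_+)$ with $\Phi(x)=x^d J_{d/2+1}(x)/J_{d/2}(x)$, and uniqueness follows from the strict monotonicity of $\Phi$ on $(0,j_{d/2,1})$ (Proposition~\ref{p.phi-increasing}). Without something like this, the one-dimensional optimization could in principle admit several critical points, and the rest of your Step~3 (including the rigidity tracked "back through Steps~1--2") does not close. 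Relatedly, your boundary analysis $\Lambda(m)\to|B|^{2/d}\lambda_2(B)$ as $m\to0^+$ is only sketched; the paper instead proves existence by the Direct Method and then excludes $B_-=\emptyset$ using the strict upper bound~\eqref{sotto} and Lemma~\ref{p.ex1}.

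A second, smaller gap: your "envelope theorem" in Step~3 ($\partial_\rho\lambda_1^g=-2c\int_{B_-}u$) and your Hadamard first-order optimality in Step~4 both presuppose differentiability of the eigenvalue, hence simplicity. Simplicity is not free here -- on the union of two balls the twisted eigenvalue can be degenerate -- and the paper establishes it (Proposition~\ref{p.shapederivative}) only under the a priori bound $|\Omega|^{2/d}\lambda<|B|^{2/d}\lambda_2(B)$, which must be proved \emph{before} differentiating. The paper's route via Lemma~\ref{l.algebraic} and Proposition~\ref{p.monotonicity2} accomplishes the reduction to $\chi_\alpha$ without any differentiability assumption, which is cleaner at this stage. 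Also, you sign-characterize the nonlocal coefficient by a "maximum-principle analysis" and get a strict inequality $(r_+/r_-)^{d-1}<1/\alpha$, which would be false for $\alpha=1$ (where $m(1)=1$ forces equality); the paper's variational characterization of the sign of $\xi$ (Propositions~\ref{prop.xi}, \ref{p.rayleigh}, Corollary~\ref{cor.xi}) yields the non-strict inequality needed for Corollary~\ref{cor.FH}. Finally, the rigidity in your Steps~1--2 ("equality forcing $\oo_\pm$ to be balls via the rigidity of P\'olya--Szeg\H{o}") needs the Brothers--Ziemer theorem \emph{and} a nonvanishing/analyticity hypothesis on the eigenfunction, which the paper supplies explicitly in the proof of Theorem~\ref{t.rearrangement}; as stated in your sketch this part of the equality analysis is incomplete.
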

 
\begin{figure}[t]
\includegraphics[width=4.1cm]{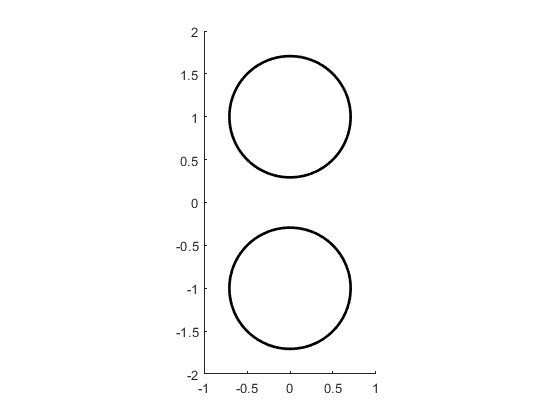}
\includegraphics[width=4.1cm]{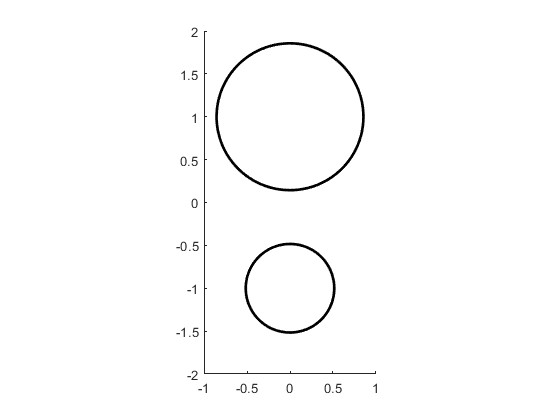}
\includegraphics[width=4.1cm]{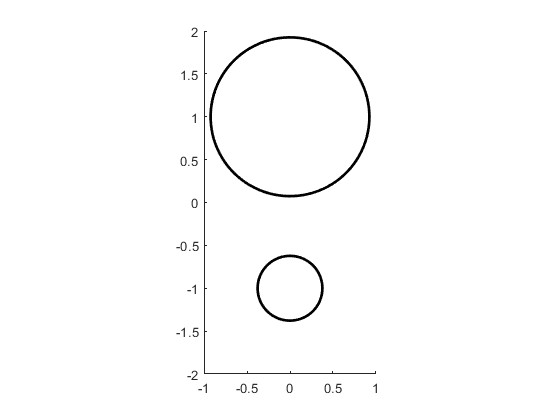}
        \caption{The optimal shape (up to scaling) $B_+^\alpha\cup B_-^\alpha$ corresponding to the values $\alpha=1$, $\alpha=\frac16$ and $\alpha=\frac{1}{20}$, in the case $d=2$. When the balls are different, the optimal bang-bang function $\chi_\alpha$ equals $\alpha$ in the larger ball and $1$ in the smaller one.}\label{fig.opt}
\end{figure}
 
The estimates \eqref{dis.radius} are derived via \emph{shape derivatives} of twisted eigenvalues without employing results on Bessel functions. Then, if $\alpha=1$, by \eqref{dis.radius} we have $m(1)=1$ and the Freitas-Henrot inequality \eqref{FH} follows, purely from the shape derivative of twisted eigenvalues, without relying on Bessel functions (the strategy is similar to \cite{bhmp} but at the end we do not use Bessel functions). Potentially, this may have applications to nonlinear eigenvalues, where an explicit representation of the eigenfunctions in terms of Bessel functions is not available (this was already noticed in \cite{chp} where a nonlinear variant of the problem was considered, however their proof required a function space larger than $H^1_0(\Omega)$, similar to the one introduced in \cite{gl}). The inequality with the lower bound for $\lambda_1^g(\Omega)$ in \eqref{interwinedTD} gives also the Hong-Krahn-Szego inequality.

\begin{cor}[Hong-Krahn-Szego and Freitas
Henrot inequalities]\label{cor.FH}
If $\alpha=1$ then \eqref{FH}, and in particular \eqref{HKS}, hold, together with their rigidity statements.
\end{cor}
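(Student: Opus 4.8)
The plan is to obtain both inequalities as immediate specializations of Theorem~\ref{t.main} to the case $\alpha=1$, combined with the elementary facts recalled in the introduction. First I would observe that the constraint class collapses to a single function: $L^\infty_1=\{g\in L^\infty(\mathbb R^d):1\le g(x)\le 1\ \text{a.e.}\}=\{g\equiv 1\}$. Hence the inner minimization in \eqref{problem} is trivial and, by \eqref{g.t}, the double minimization \eqref{problem} with $\alpha=1$ is simply $\min_{\oo\in\mathcal O}|\Omega|^{2/d}\lambda_1^T(\oo)$. By Theorem~\ref{t.main}, this problem has a unique minimizer, the union of two disjoint balls $B_+^1\cup B_-^1$; moreover $\chi_1=\chi_{B_+^1}+\chi_{B_-^1}\equiv 1$ on $B_+^1\cup B_-^1$, so that $\lambda_1^{\chi_1}(B_+^1\cup B_-^1)=\lambda_1^T(B_+^1\cup B_-^1)$ and \eqref{isoperimetric} reads
\begin{equation*}
|\Omega|^{2/d}\lambda_1^T(\oo)\ge |B_+^1\cup B_-^1|^{2/d}\lambda_1^T(B_+^1\cup B_-^1)=\lambda(1),\qquad\text{for all }\oo\in\mathcal O.
\end{equation*}

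Next I would identify $B_+^1\cup B_-^1$ with a union of two disjoint balls of \emph{equal} measure (the set $B^\wedge\cup B^\vee$ in the notation of \eqref{FH} and \eqref{HKS}). For this I use the first estimate in \eqref{dis.radius}, which gives $m(1)\ge 1^{d/(d-1)}=1$; since Theorem~\ref{t.main} also guarantees $m(1)\le 1$, we conclude $m(1)=1$, i.e. $|B_-^1|=|B_+^1|$. Therefore the displayed inequality is precisely the Freitas--Henrot inequality \eqref{FH}, and its rigidity follows from that of Theorem~\ref{t.main}: equality forces $\oo=B_+^1\cup B_-^1$ (the condition $g=\chi_1$ being automatically satisfied, as $L^\infty_1$ is a singleton).

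To derive \eqref{HKS} I would combine the interlacing bound \eqref{interwinedTD} with $g\equiv 1$, namely $\lambda_1^T(\oo)\le\lambda_2(\oo)$, with the identity \eqref{uguali}, $\lambda_1^T(B_+^1\cup B_-^1)=\lambda_2(B_+^1\cup B_-^1)$, to obtain
\begin{equation*}
|\Omega|^{2/d}\lambda_2(\oo)\ge|\Omega|^{2/d}\lambda_1^T(\oo)\ge\lambda(1)=|B_+^1\cup B_-^1|^{2/d}\lambda_2(B_+^1\cup B_-^1),
\end{equation*}
which is \eqref{HKS}. For its rigidity, equality in \eqref{HKS} squeezes the two inequalities above into equalities, so in particular $|\Omega|^{2/d}\lambda_1^T(\oo)=\lambda(1)$; the rigidity of \eqref{FH} just established then yields $\oo=B_+^1\cup B_-^1$, and the converse implication is immediate.

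No genuine obstacle is expected, since everything reduces to Theorem~\ref{t.main} and to the facts recalled above. The only point requiring a little care is the rigidity of \eqref{HKS}: one has to check that equality in \eqref{HKS} propagates \emph{backwards} to equality in \eqref{FH} through the interlacing bound, which works precisely because the chain of inequalities is one-directional and its endpoints agree on $B_+^1\cup B_-^1$ by virtue of \eqref{uguali}.
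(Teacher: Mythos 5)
Your proposal matches the paper's argument in all essentials: specialize Theorem~\ref{t.main} at $\alpha=1$ (where $L^\infty_1$ is a singleton and $\chi_1\equiv1$), deduce $m(1)=1$ from the first estimate in \eqref{dis.radius} together with $m(\alpha)\le1$, identify \eqref{isoperimetric} with the Freitas--Henrot inequality, and then obtain \eqref{HKS} by combining it with the interlacing bound \eqref{interwinedTD} and the equality \eqref{uguali}. Your spelling out of the Hong--Krahn--Szeg\H{o} rigidity by back-propagating equality through the chain of inequalities is correct and slightly more explicit than the paper's terse remark, but it is the same route.
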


Section~\ref{sec.4} contains the proofs of Theorem~\ref{t.main} and Corollary~\ref{cor.FH}. Notice, however, that extending the result to the general case  $\alpha<1$ requires proof strategies that rely on Bessel functions. 

As a second topic we study the dependence of the measure ratio $m(\alpha)$ and of the minimal value $\lambda(\alpha)$ in terms of the parameter $\alpha$ (see Figure~\ref{fig12} for some plots of these two maps).

 \begin{thm}[Continuous maps connecting the optimal shapes of Dirichlet eigenvalues]\label{t.main2}
 The functions $m(\alpha)$ and $\lambda(\alpha)$ in Theorem~\ref{t.main} are
 continuously differentiable in $(0,1)$, strictly increasing with  
        \begin{equation*}
             \lim_{\alpha \rightarrow 0^+} m(\alpha)=0  \quad \text{and} \quad  \lim_{\alpha \rightarrow 1^-} m(\alpha)=m(1)=1,
        \end{equation*}
        and moreover
          \begin{equation*}
       \lim_{\alpha \rightarrow 0^+} \lambda(\alpha)
        =|B^\wedge|^\frac{2}{d}\lambda_1(B^\wedge)
        \ \text{and} \
         \lim_{\alpha \rightarrow 1^-}\lambda(\alpha)= \lambda(1)
        = |B^\wedge \cup B^\vee|^\frac{2}{d}\lambda_{2}(B^{\wedge} \cup B^{\vee}), 
    \end{equation*}
    where $B^\wedge$ and $B^\vee$ are any disjoint balls in $\mathbb R^d$ of equal measure.
    \end{thm}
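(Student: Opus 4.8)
The plan is to reduce the statement to a one-variable study. By the scaling law \eqref{scaling} and the rigidity in Theorem~\ref{t.main}, for a given $\alpha\in(0,1]$ the optimal pair depends only on the measure ratio $t$ of the two balls; so it is natural to introduce, for disjoint balls $B_1,B_2$ with $|B_2|/|B_1|=t$, the scale-invariant quantity
\[
\Lambda(\alpha,t):=|B_1\cup B_2|^{\frac{2}{d}}\,\lambda_1^{\alpha\chi_{B_1}+\chi_{B_2}}(B_1\cup B_2),
\]
so that $\lambda(\alpha)=\min_{t>0}\Lambda(\alpha,t)$, attained uniquely at $t=m(\alpha)\le1$ by Theorem~\ref{t.main}; write $\Omega_\alpha:=B_+^\alpha\cup B_-^\alpha$ for the optimal domain, and note at once that $m(1)=1$ (by the first estimate in \eqref{dis.radius} together with $m(1)\le1$), so that $\Omega_1$ is a union of two equal balls and $\lambda(1)=|B^\wedge\cup B^\vee|^{\frac{2}{d}}\lambda_2(B^\wedge\cup B^\vee)$ by \eqref{uguali}. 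The strict monotonicity of $\lambda$ is immediate from the double minimization \eqref{problem}: if $\alpha_1<\alpha_2$ then $L^\infty_{\alpha_2}\subset L^\infty_{\alpha_1}$, whence $\lambda(\alpha_1)\le\lambda(\alpha_2)$; and the $\alpha_2$-minimizer is admissible for the $\alpha_1$-problem -- its density takes values $\alpha_2>\alpha_1$ and $1$ -- but is not its minimizer, since the optimal $\alpha_1$-density $\chi_{\alpha_1}$ takes the different value $\alpha_1$ on a set of positive measure; hence by rigidity $\lambda(\alpha_1)<\lambda(\alpha_2)$.

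Next I would show that $\Lambda$ is smooth near the optimal locus, the key being that the first twisted eigenvalue of $(\Omega_\alpha,\chi_\alpha)$ is \emph{simple}. First, \eqref{eq.optimality} forces $m(\alpha)<1$ for $\alpha<1$: if the two optimal balls were equal, the bottom twisted eigenfunction would be $u_1^{B_+^\alpha}-\alpha\,u_1^{B_-^\alpha}$ (the first Dirichlet eigenfunctions of the two balls), whose gradient norm on $\partial B_+^\alpha$ equals $\alpha^{-1}$ times that on $\partial B_-^\alpha$, contradicting \eqref{eq.optimality} unless $\alpha=1$. Hence $B_+^\alpha\neq B_-^\alpha$, so $\lambda_1(\Omega_\alpha)=\lambda_1(B_+^\alpha)$ is simple and $\lambda_2(\Omega_\alpha)>\lambda_1(\Omega_\alpha)$; moreover $\lambda_1^{\chi_\alpha}(\Omega_\alpha)>\lambda_1(\Omega_\alpha)$, because the first Dirichlet eigenfunction $u_1^{B_+^\alpha}$ violates the orthogonality constraint ($\int_{B_+^\alpha}u_1^{B_+^\alpha}\neq0$), and $\lambda_1^{\chi_\alpha}(\Omega_\alpha)<\lambda_2(\Omega_\alpha)$, since otherwise \eqref{interwinedTD} would give $\lambda_1^{\chi_\alpha}(\Omega_\alpha)=\lambda_2(\Omega_\alpha)$, whence $\lambda(\alpha)=|\Omega_\alpha|^{\frac{2}{d}}\lambda_2(\Omega_\alpha)\ge|B^\wedge\cup B^\vee|^{\frac{2}{d}}\lambda_2(B^\wedge\cup B^\vee)=\lambda(1)$ by \eqref{HKS}, contradicting $\lambda(\alpha)<\lambda(1)$. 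Thus $\lambda_1^{\chi_\alpha}(\Omega_\alpha)$ lies strictly between the first two Dirichlet eigenvalues of $\Omega_\alpha$, hence is not a Dirichlet eigenvalue, so by \eqref{pde} the twisted eigenspace is one-dimensional (spanned by $(-\Delta-\lambda_1^{\chi_\alpha}(\Omega_\alpha))^{-1}\chi_\alpha$ with Dirichlet conditions). Simplicity being an open condition, analytic perturbation theory -- after the usual diffeomorphism reduction for the moving balls -- gives that $\Lambda$ is real-analytic in a neighbourhood of $\{(\alpha,m(\alpha)):\alpha\in(0,1)\}$. Consequently $\lambda(\alpha)=\Lambda(\alpha,m(\alpha))$ is $C^1$ with $\lambda'(\alpha)=\partial_\alpha\Lambda(\alpha,m(\alpha))$ by Danskin's theorem, and $m$ is continuous; finally $m\in C^1$ follows from the implicit function theorem applied to $\partial_t\Lambda(\alpha,t)=0$, once the strict second-order condition $\partial^2_{tt}\Lambda(\alpha,m(\alpha))>0$ is checked, which furthermore gives $m'(\alpha)=-\partial^2_{\alpha t}\Lambda(\alpha,m(\alpha))/\partial^2_{tt}\Lambda(\alpha,m(\alpha))$.

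The strict monotonicity of $m$ is the main obstacle: via the last formula it is equivalent to $\partial^2_{\alpha t}\Lambda(\alpha,m(\alpha))<0$, i.e.\ to the marginal cost $\partial_\alpha\Lambda(\alpha,t)$ being strictly decreasing in $t$ near the minimizer. I would record the Hellmann-Feynman-type identity $\partial_\alpha\Lambda(\alpha,t)=-2\,c_\alpha\int_{B_1}u_\alpha\,dx$, where $c_\alpha$ is the non-local coefficient appearing in \eqref{pde} and $u_\alpha$ is $L^2$-normalized, and then differentiate it in $t$ by means of the shape derivative of twisted eigenvalues already employed for \eqref{dis.radius}; the sign of the resulting expression is to be extracted from the monotonicity of the radial eigenfunction profiles on $B_1$ and $B_2$. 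It is here that a more explicit (Bessel) description of $u_\alpha$, avoided in the proof of Theorem~\ref{t.main}, is likely to re-enter.

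It remains to identify the limits. As $\alpha\to1^-$, the first estimate in \eqref{dis.radius} squeezes $\alpha^{\frac{d}{d-1}}\le m(\alpha)\le1$, so $m(\alpha)\to1=m(1)$; therefore $\Omega_\alpha$ converges (after rigid motions) to a union of two equal balls while $\chi_\alpha\to1$ uniformly, and by continuity of $(g,\Omega)\mapsto\lambda_1^g(\Omega)$ together with \eqref{uguali}, $\lambda(\alpha)\to|B^\wedge\cup B^\vee|^{\frac{2}{d}}\lambda_2(B^\wedge\cup B^\vee)=\lambda(1)$. As $\alpha\to0^+$, the second estimate in \eqref{dis.radius} gives $\liminf_{\alpha\to0^+}\lambda(\alpha)\ge|B|^{\frac{2}{d}}\lambda_1(B)$, while for each fixed $t>0$ one has $\lambda(\alpha)\le\Lambda(\alpha,t)\to\Lambda(0,t)=(1+t)^{\frac{2}{d}}|B|^{\frac{2}{d}}\lambda_1(B)$, by continuity of $g\mapsto\lambda_1^g$ at $g=\chi_{B_2}\not\equiv0$ (for which $\lambda_1^{\chi_{B_2}}(B_1\cup B_2)=\lambda_1(B_1)$), and letting $t\to0^+$ gives the reverse inequality; hence $\lambda(\alpha)\to|B^\wedge|^{\frac{2}{d}}\lambda_1(B^\wedge)$. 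Finally, since $m(\alpha)\in(0,1]$, if $m(\alpha_k)\to m_0>0$ along some $\alpha_k\to0^+$ then, passing to the limit in $\Lambda(\alpha_k,m(\alpha_k))=\lambda(\alpha_k)$, we would get $\Lambda(0,m_0)=|B^\wedge|^{\frac{2}{d}}\lambda_1(B^\wedge)=\inf_{t>0}\Lambda(0,t)$, which is impossible because $t\mapsto\Lambda(0,t)=(1+t)^{\frac{2}{d}}|B^\wedge|^{\frac{2}{d}}\lambda_1(B^\wedge)$ is strictly increasing; therefore $m(\alpha)\to0$.
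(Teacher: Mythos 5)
Your proof takes a genuinely different route from the paper's and it establishes several parts of the statement cleanly (the monotonicity of $\lambda$ via nesting of the classes $L^\infty_\alpha$ and the rigidity statement, the simplicity of the optimal twisted eigenvalue via the resolvent representation $u = c(-\Delta-\lambda)^{-1}\chi_\alpha$, and all four limits). Those parts are correct and in places more transparent than the paper's own arguments. However, the two central assertions of the theorem — that $m(\alpha)$ is $C^1$ \emph{and} strictly increasing — are not actually proved in your proposal.

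For the $C^1$-regularity you reduce the question to the strict second-order condition $\partial^2_{tt}\Lambda(\alpha,m(\alpha))>0$ but never verify it; the minimum being strict (uniqueness in Theorem~\ref{t.main}) does not by itself yield a non-degenerate Hessian, so the implicit function theorem cannot be invoked. And for the strict monotonicity of $m$ you candidly flag it as ``the main obstacle'' and defer the sign check of $\partial^2_{\alpha t}\Lambda$ to an unperformed Bessel computation. Those are exactly the points where the paper does the hard work: it introduces the auxiliary variables $M=R_+^d$ and $\omega=\sqrt\lambda$, reformulates the orthogonality and optimality conditions as the explicit Bessel system $f_1=0$, $f_2=0$ with $\phi(x)=x^d J_{d/2+1}/J_{d/2-1}$ and $\Phi(x)=x^d J_{d/2+1}/J_{d/2}$, verifies invertibility of the Jacobian through the strict monotonicity of $\phi$ and $\Phi$ (Propositions~\ref{p.phi} and~\ref{p.phi-increasing}), and then extracts the sign of $M'$ from the sign of $\partial f_2/\partial\alpha$, $\partial f_2/\partial M$, $\partial f_2/\partial\omega$, the last one controlled via the additional function $\Upsilon$ of Proposition~\ref{p.upsilon}. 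Without these (or an equivalent quantitative input), your argument proves continuity of $m$ and differentiability of $\lambda$, but neither the differentiability nor the strict monotonicity of $m$; so the proposal, as it stands, has a genuine gap.
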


Section~\ref{sec.5} contains the proof of Theorem~\ref{t.main2}. Roughly speaking, the double minimization problem \eqref{problem} offers a way to interpolate between the optimal shapes of the first two Dirichlet eigenvalues of the Laplacian, as given in \eqref{FK} and \eqref{HKS}, through a \emph{continuous} 1-parameter family of optimal shapes that are \emph{union of two disjoint balls}. We are not aware of any results of this type that simultaneously maintain the continuity of the map and ensure that the optimal shapes are unions of balls. We would like to remind the reader that the optimal sets for non-trivial linear combinations of the first two Dirichlet eigenvalues are not represented by unions of balls, see \cite[Section 6.4.2, Open problem 20]{h} and also \cite[Theorems 1.1 and 1.3]{mz}. On the contrary, 
a connection between the optimal shapes for $\lambda_1$ and $\lambda_{1}^{T}$ was found in \cite{bfnt}, where the minimizing sets associated with a $1$-parameter family of eigenvalues are studied. This problem exhibits a saturation behavior: the minimal eigenvalue increases with the parameter up to a finite critical value, after which it remains constant. This critical point marks the transition between one ball to two equal balls as the optimal shapes. Therefore, the map that associates the parameter to the optimal shape is not continuous in the topology induced by the Hausdorff distance. Similar questions are addressed in \cite{dp}. We notice that another $1$-parameter family of balls gives the solution of an optimization problem involving the \emph{twisted Cheeger constant}, see \cite{bdpnt}.

\begin{figure}
\includegraphics[width=6.2cm]{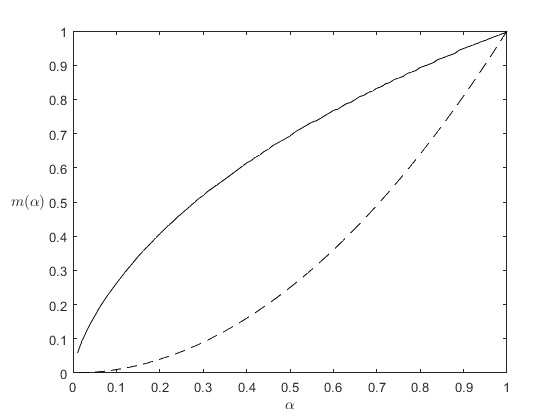}
\includegraphics[width=6.2cm]{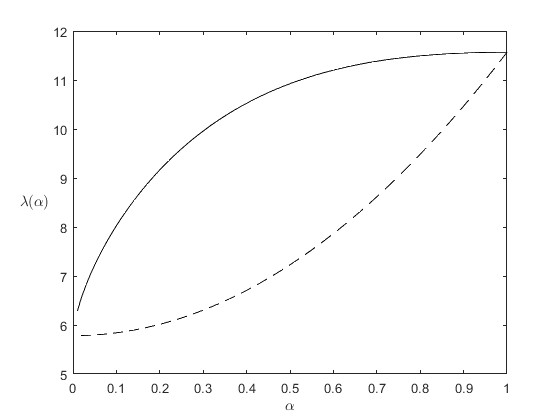}
        \caption{Plots of the functions $m(\alpha)$ and $\lambda(\alpha)$ (continuous lines) with their lower bounds in \eqref{dis.radius} (dashed lines), in the case $d=2$.}\label{fig12}
\end{figure}    

Now, we write a comment on the role of the dimension. On the one hand, the main results presented in this paper -- such as the isoperimetric inequalities -- also hold in the one-dimensional case ($d=1$), where the proofs simplify and may even yield stronger results. On the other hand, certain arguments encounter technical difficulties in this setting. For instance, the estimate  \eqref{dis.radius} becomes ill-defined in dimension one, and it is unclear how to recover an appropriate analogue.  For this reason, we have chosen to address the one-dimensional case separately in a future work (and, in the present paper, we mainly omitted the references related to the one-dimensional case).

We conclude with an interesting application of Theorems~\ref{t.main} and \ref{t.main2} to a non-existence result for the shape optimization problem \eqref{p.finale}, for a suitable function $g$ represented in Figure~\ref{fig.monotonecirc}. A minimizing sequence consists of two balls: a small ball centered at the origin whose radius shrinks, and a larger ball whose radius grows while its center drifts away from the origin. Its proof is contained in Section~\ref{sec.5}.

\begin{cor}[Non-existence result]\label{c.nonexistence}
If $\alpha \in (0,1)$ then there exists $g\in L^\infty_\alpha(\mathbb R^d)$ for which problem \eqref{p.finale} has no solution.
\end{cor}

\medskip
\paragraph{\textbf{Notation}}  
We denote by $\N$ the set of natural numbers
starting at one. We denote by $\mathbb{R}^+ := (0,+\infty)$ the set of positive real numbers. We use the shorthand notation \emph{a.e.} to say \emph{almost everywhere} with respect to the Lebesgue measure and \emph{w.r.t.} to say \emph{with respect to}. Given $a \in \R$ and $f \in L^2_{\text{loc}}(\R^d)$, for the sake of brevity, we simply write $f \ge a$ to denote $f\ge a$ a.e. in $\mathbb R^d$ (and similarly for the inequalities $>,<,\le$ or the equality $=$). Given a set $A\in\mathcal O$ the characteristic function $\chi_A$ of the set $A$ is defined as usual by $\chi_A(x)=1$ if $x\in A$ and $\chi_A(x)=0$ otherwise. The symbol $\delta_d$ denotes the volume of the ball of unit radius in $\mathbb R^d$. Given $p \in [1,+\infty]$ the symbol $||\cdot||_{p}$ is the $L^p$-norm of a function defined in $\R^d$. When there is no ambiguity, given a set of class $C^1$ we denote by $\nu$ its outward unit normal, defined on each point of its boundary. The symbol $\mathcal H^{d-1}$ denotes the $d-1$ dimensional Hausdorff measure.
  
If no assumptions are specified on $\Omega$, $g$, and $\alpha$ we always mean $\Omega \in \mathcal O$, $g\in L^2_\text{loc}(\mathbb R^d)$, and $0<\alpha\le 1$.
For any continuous function $u\in H^1_0(\Omega)$ we associate the positivity and negativity sets $\oo^+=\{u>0\}$ and $\oo^-=\{u<0\}$, and define the positive and negative parts of $u$ as $u^+=u|_{\oo^+}$ and $u^-=-u|_{\oo^-}$, respectively (so that $u=u^+ -u^-$). This should not be confused with the notation $B_+$ and $B_-$ (here $+$ and $-$ are subscripts) which always refers to two disjoint \emph{non-empty} balls. 
As usual, functions in $H^1_0(\Omega)$ are extended by $0$ outside $\Omega$.
When there is no ambiguity, given two sets $A_1,A_2\in \mathcal O$ we associate the bang-bang function $\chi_\alpha$, defined as $\chi_\alpha:=\alpha\chi_{A_1}+\chi_{\mathbb R^d\setminus A_1}$. If there is no misunderstanding, $\chi_\alpha$ is the function equal to $\alpha$ in $A_1$ and $1$ in $A_2$ (and extended to 1 elsewhere). By \emph{the} first Dirichlet eigenfunction $u_1$ of a simple eigenvalue $\lambda_1(\Omega)$ (e.g., when $\Omega$ is connected) we always refer to the \emph{positive} and \emph{normalized in $L^2(\Omega)$}, namely  $\int_{\Omega} u_1(x)^2 dx=1$, eigenfunction corresponding to $\lambda_1(\oo)$.

\section{Preliminary results on twisted eigenfunctions}\label{s.preliminary}

In this section we prove some properties of twisted eigenfunctions corresponding to $\lambda_1^g(\Omega)$. We use the shorthand notation $\mathcal R(u)$ to denote
the \emph{Rayleigh quotient} of a function $u\in H^1(\Omega)$, i.e., we define
\begin{equation*}
         \mathcal{R}(u):=\frac{\int_\oo |\nabla u(x)|^2  dx}{\int_\oo  u(x)^2  dx}.
\end{equation*}

We start by deriving the Euler-Lagrange equation satisfied by twisted eigenfunctions. To this purpose we need to construct a function in $H^1_{0,g}(\Omega)$ as a suitable linear combination of functions in $H^1_0(\Omega)$. Since this construction will be useful in several proofs, we state it separately in the following.

\begin{lem}\label{l.convex}
Let $v_1,v_2\in H^1_0(\Omega)$ and consider their linear combination $v$ defined by 
\begin{equation}\label{e.convex}
    v:=\begin{cases}
    v_1-\gamma v_2 \quad &\text{if $v_2\notin H^1_{0,g}(\Omega)$,}\\
    v_2\qquad &\text{if $v_2\in H^1_{0,g}(\Omega),$}
\end{cases}
\qquad \text{with } \gamma:=\frac{\int_\oo v_1(x) g(x)  dx}{\int_\oo v_2(x) g(x)  dx}.
\end{equation}
Then $v \in \gi$. If moreover $v_1$ and $v_2$ are \emph{linearly independent}  then $v\not\equiv 0$.
\end{lem}
\begin{proof}
Linear combinations of $H^1_0(\Omega)$ functions are in $H^1_0(\Omega)$. By definition of $v$ and the linearity of the integral, $\int_\Omega v(x)g(x) dx=0$, thus $v\in \gi$. Moreover, if the functions $v_1,v_2$ are linearly independent, then $v_1\not\equiv 0$, $v_2\not\equiv 0$, and also $v\not\equiv 0$.
\end{proof}

\begin{prop}\label{p.min}
Let $u$ be a minimizer of \eqref{lambda}. Then 
\begin{equation}\label{eq.lambda}
    \int_\oo \nabla u \nabla \phi \, dx=  \lambda_{1}^{g}(\oo) \int_\oo u \phi \, dx, \quad \text{for every } \phi \in \gi
\end{equation}
and
    \begin{equation}\label{eq.EL}
        \int_\oo \nabla u \nabla \psi \, dx
    =  \lambda_{1}^{g}(\oo) \int_\oo u \psi \, dx+ \xi \int_\oo g \psi \, dx, \quad \text{for every } \psi \in H^1_{0}(\oo) ,
    \end{equation}
for some $\xi \in \R$ (possibly depending on $u$ and $g$, i.e., $\xi =\xi(u,g)$). If $g\not\equiv 0$ and $u$ also belongs to $H^2(\oo)$, then it satisfies \eqref{pde} in the weak sense, namely \eqref{eq.EL} with
    \begin{equation}\label{eq.xi}
            \xi=\dfrac{\int_{\oo} (-\Delta u(x))g(x)dx}{\int_{\Omega} g(x)^2 dx}.
    \end{equation}
\end{prop}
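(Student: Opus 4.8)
The plan is to establish the three assertions in turn, each relying on the previous one, with the real work concentrated in passing from variations constrained to $\gi$ to arbitrary variations in $H^1_0(\oo)$, and then in pinning down the multiplier $\xi$.

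\emph{Step 1 (the identity \eqref{eq.lambda}).} Here I would use only that $\gi$ is a linear subspace, so that $u+t\phi\in\gi$ for every $\phi\in\gi$ and every $t\in\R$. Since $u\not\equiv0$ and $u$ minimizes $\mathcal R$ over $\gi\setminus\{0\}$, the function $t\mapsto\mathcal R(u+t\phi)=N(t)/D(t)$, with $N(t)=\int_\oo|\nabla(u+t\phi)|^2\,dx$ and $D(t)=\int_\oo(u+t\phi)^2\,dx$, is differentiable near $t=0$ (the denominator $D(0)=\int_\oo u^2\,dx$ is positive, as $u\not\equiv0$) and attains a minimum at $t=0$. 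The vanishing of its derivative there, $N'(0)D(0)=N(0)D'(0)$, reads $\int_\oo\nabla u\nabla\phi\,dx\int_\oo u^2\,dx=\int_\oo|\nabla u|^2\,dx\int_\oo u\phi\,dx$; dividing by $\int_\oo u^2\,dx$ and recalling $\int_\oo|\nabla u|^2\,dx=\lambda_1^g(\oo)\int_\oo u^2\,dx$ gives \eqref{eq.lambda}.

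\emph{Step 2 (the identity \eqref{eq.EL}).} The difficulty is that a generic $\psi\in H^1_0(\oo)$ is not an admissible variation, since it need not respect the constraint $\int_\oo ug\,dx=0$. If $\int_\oo\psi g\,dx=0$ for all $\psi\in H^1_0(\oo)$ --- equivalently $g=0$ a.e.\ in $\oo$, so that $\gi=H^1_0(\oo)$ --- then \eqref{eq.EL} is just \eqref{eq.lambda} with $\xi=0$. Otherwise I fix $w\in H^1_0(\oo)$ with $\int_\oo wg\,dx=1$, and for an arbitrary $\psi\in H^1_0(\oo)$ I correct it to $\phi:=\psi-\big(\int_\oo\psi g\,dx\big)w$, which satisfies $\int_\oo\phi g\,dx=0$, hence $\phi\in\gi$. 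Applying \eqref{eq.lambda} to $\phi$ and collecting terms yields \eqref{eq.EL} with the explicit value $\xi=\int_\oo\nabla u\nabla w\,dx-\lambda_1^g(\oo)\int_\oo uw\,dx$, a number depending only on $u$ and $g$; this is nothing but a hands-on Lagrange-multiplier argument.

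\emph{Step 3 (the value \eqref{eq.xi}).} Assume $u\in H^2(\oo)$ and $\int_\oo g^2\,dx>0$ (this positivity is implicit in \eqref{pde} and is what makes \eqref{eq.xi} meaningful; in the complementary case $g=0$ a.e.\ in $\oo$ one has $\gi=H^1_0(\oo)$ and nothing is to be proved). Testing \eqref{eq.EL} with $\psi\in C_c^\infty(\oo)\subset H^1_0(\oo)$ and integrating the Dirichlet term by parts (no boundary contribution, as $\psi$ has compact support) turns it into $\int_\oo\big(-\Delta u-\lambda_1^g(\oo)u-\xi g\big)\psi\,dx=0$; since $\overline\oo$ is compact and $g\in L^2_{\text{loc}}(\R^d)$ we have $g|_\oo\in L^2(\oo)$, so the bracket belongs to $L^2(\oo)$, and density of $C_c^\infty(\oo)$ in $L^2(\oo)$ forces $-\Delta u=\lambda_1^g(\oo)u+\xi g$ a.e.\ in $\oo$ --- in particular $u$ solves \eqref{pde} in the weak sense. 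Multiplying this pointwise identity by $g\in L^2(\oo)$, integrating over $\oo$, and using $\int_\oo ug\,dx=0$ (valid since $u\in\gi$) makes the $u$-term vanish and leaves $\int_\oo(-\Delta u)g\,dx=\xi\int_\oo g^2\,dx$, i.e.\ \eqref{eq.xi}. No step is deep; the only points requiring care are the $w$-correction in Step 2 --- one genuinely cannot vary $u$ freely in $H^1_0(\oo)$ --- and, in Step 3, the fact that $g\notin H^1_0(\oo)$ in general, so \eqref{eq.EL} cannot be tested against $g$ directly and must first be upgraded to the pointwise PDE, which is precisely where the $H^2$-regularity enters.
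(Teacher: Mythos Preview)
Your argument is correct, and Steps 1--2 mirror the paper's proof essentially verbatim: differentiate the Rayleigh quotient along constrained variations $u+t\phi$ with $\phi\in\gi$, then for a general $\psi\in H^1_0(\oo)$ subtract a multiple of a fixed auxiliary function $w$ (the paper's $\varphi$) with $\int_\oo wg\neq0$ to land back in $\gi$ and read off the Lagrange multiplier.

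In Step 3 the two proofs diverge slightly. The paper keeps the expression $\xi=\big(\int_\oo(-\Delta u)\varphi-\lambda\int_\oo u\varphi\big)/\int_\oo g\varphi$ for $\varphi\in H^1_0(\oo)\setminus\gi$ and would like to set $\varphi=g$; since $g$ need not lie in $H^1_0(\oo)$, it passes to the limit along a sequence $\varphi_n\to g$ in $L^2(\oo)$, checking along the way that $\int_\oo g\varphi_n>0$ eventually. You instead first upgrade \eqref{eq.EL} to the pointwise identity $-\Delta u=\lambda_1^g(\oo)u+\xi g$ a.e.\ (testing against $C^\infty_c(\oo)$ and using density in $L^2$), and only then pair with $g$. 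Your route is a bit cleaner --- it sidesteps the limiting argument and the positivity check entirely --- while the paper's has the minor virtue of exhibiting $\xi$ explicitly as a functional of any single test function before specialising.
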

\begin{proof}
Let $\lambda=\lambda_{1}^{g}(\oo)$. The existence of a minimizer follows by the Direct Methods of the Calculus of Variations (by
taking a suitable minimizing sequence that, up to scaling, can be assumed to be normalized in $L^2(\Omega)$). The space $\gi$ is compact with respect to the weak convergence in $H^1(\Omega)$ (notice that the orthogonality condition to $g$ is preserved by this convergence) and the Rayleigh quotient $\mathcal{R}$ is lower semicontinuous with respect to this convergence.  Now, if $g\equiv 0$ there is nothing to prove, since $\lambda$ is the first Dirichlet eigenvalue. Let $g\not\equiv 0$ and $u\in \gi\setminus\{0\}$ be a minimizer of \eqref{lambda} then
\begin{equation*}
    \dfrac{d}{d \epsilon} \left(\mathcal R(u+\epsilon\phi)\right) \Big|_{\epsilon=0}=0, \quad \textup{for every } \phi \in \gi,
\end{equation*}
and \eqref{eq.lambda} follows.
For $\psi \in H^1_{0}(\oo)$ and $\vp \in H^1_{0}(\oo) \setminus \gi$, by Lemma~\ref{l.convex}, we can consider the function
\begin{equation*}
    \phi:=\psi- \frac{\int_\oo g(x) \psi(x)  dx}{\int_\oo g(x) \vp(x)  dx} \vp\in \gi,
\end{equation*}
which plugged into \eqref{eq.lambda} yields
\begin{equation*}
    \int_\oo \nabla u(x) \nabla \psi(x)  dx 
    = \lambda \int_\oo u(x) \psi(x)  dx +  \xi \int_\oo g(x) \psi(x)  dx,\  \text{ for every } \psi \in H^1_{0}(\oo),
\end{equation*}
with
\begin{equation*}
    \xi=\xi(u,g,\vp)=\frac{\int_\oo \nabla u(x)  \nabla \vp(x)  dx - \lambda \int_\oo u(x) \vp(x)  dx}{\int_\oo g(x) \vp(x)  dx}.
\end{equation*}
This is equation \eqref{eq.EL} and it remains to prove that the quantity $\xi$ does not depend on $\varphi$.
To this purpose, let $\vp_1,\vp_2, \psi_0 \in H^1_{0}(\oo) \setminus \gi$. Subtracting the equation above with $\vp=\vp_1$ and $\psi=\psi_0$ from the one with $\vp=\vp_2$ and $\psi=\psi_0$ we obtain
\begin{equation*}
    [\xi(u,g,\vp_1)-\xi(u,g,\vp_2)]\int_\oo g(x) \psi_0(x)  dx=0.
\end{equation*}
We chose $\psi_0$ such that $\int_\oo g(x) \psi_0(x)  dx\neq 0$, hence $\xi(u,g,\vp_1)=\xi(u,g,\vp_2)=\xi(u,g)$.

Finally, when $g\not\equiv 0$ and $u$ also belongs to $H^2(\oo)$, an integration by parts yields
\begin{equation}\label{eq.dim2}
    \int_{\oo} \nabla u \nabla \psi  dx
    = \lambda \int_{\oo} u \psi  dx+ \frac{\int_{\oo} (-\Delta u)  \vp  dx - \lambda \int_{\oo} u \vp  dx }{\int_{\oo} g \vp  dx} \int_{\oo} g \psi  dx.
\end{equation}
To finish the proof we would take $\vp=g$ in \eqref{eq.dim2}, which is clearly not possible, since $g$ could not belong to $H^1_{0}(\Omega)$. However, this can be justified with a limiting argument: by density, there exists a sequence $(\vp_n)_n$ in $H^1_{0}(\Omega)$ such that $\vp_n \rightarrow g$ in $L^2(\oo)$ as $n \rightarrow +\infty$. Moreover, for every $n$ sufficiently large such that $\|\vp_n-g\|_2<\|\vp_n\|_2$ (this is always possible since the left-hand side converges to 0 while the right-hand side goes to $\|g\|_2>0$ being $g\not\equiv0$) it follows that
$$2\int_\Omega g(x) \vp_n(x) dx=\|\vp_n\|_2^2+\|g\|_2^2 -\|\vp_n-g\|_2^2>\|g\|^2_2>0,$$
that is there exists a sequence $(\vp_n)_n$ in $H^1_{0}(\oo) \setminus \gi$ converging to $g$ in $L^2(\Omega)$. By letting first $\vp=\vp_n$ in \eqref{eq.dim2} and then $n \rightarrow +\infty$ we find that a minimizer solves \eqref{pde} in the weak sense.
\end{proof}

We aim to characterize the sign of $\xi$ in \eqref{eq.EL}. A preliminary result is the following.

\begin{oss}\label{bbs33}
Let $u$ be a twisted eigenfunction corresponding to $\lambda_1^g(\Omega)$. If $u$ is not a Dirichlet eigenfunction and $g$ does not vanish a.e. on any open subset of $\Omega$, then $u$ cannot vanish on any open subset of $\Omega$. Indeed, assume that $u \equiv 0$ in some open set $A\subset \Omega$. Then, since $g\not\equiv 0$ a.e. in $A$ by the du Bois-Reymond Lemma there exists $\psi_0\in H^1_0(A)$ such that $\int_A \psi_0 g dx \neq 0$ and by plugging $\psi_0$ in \eqref{eq.EL} one obtains $\xi \equiv 0$. Therefore, $u$ satisfies \eqref{eq.EL} with $\xi=0$, that is $u$ is a Dirichlet eigenfunction of $\Omega$ (which vanishes in $A$). This proves what claimed above. 

A particular case of application is when $\Omega$ is not connected with $A$ one of its connected components and $g$ piecewise constant on the connected components. 
\end{oss}

To obtain more information, we need to state some regularity results for twisted eigenfunctions corresponding to $\lambda_{1}^{g}(\oo)$, depending on the regularity of $\oo$ and of $g$.

\begin{lem}\label{l.reg}
Let $u$ be a twisted eigenfunction corresponding to $\lambda_1^g(\Omega)$. 
\begin{itemize}
\item[(a)] If $g\in  L^\infty(\mathbb R^d)$, then $u$ is continuous in $\Omega$.
\item[(b)] If $g$ is analytic in $\oo$, then $u$ is analytic in $\oo$.
\item[(c)] If $g \in C^\infty(\overline{\oo})$ and $\oo$ is a set of class $C^\infty$, then $u \in C^\infty(\overline{\oo})$. In particular, $u$ satisfies \eqref{pde} in the classical sense.
\item[(d)] If $\Omega$ is convex or is of class $C^{1,1}$, then $u \in H^2(\Omega)$.
\end{itemize}
\end{lem}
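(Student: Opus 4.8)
The plan is to observe that, despite its non-local appearance, \eqref{pde} is from the viewpoint of regularity nothing but an inhomogeneous linear second-order elliptic equation with constant coefficients, and then to invoke the classical interior and boundary elliptic regularity (and analyticity) theory. If $g\equiv 0$ there is nothing to prove, since $u$ is then a first Dirichlet eigenfunction of $\Omega$ and both statements are classical; so assume $g\not\equiv 0$. By Proposition~\ref{p.min} the eigenfunction $u$ satisfies \eqref{eq.EL}, that is, it is a distributional solution in $\Omega$ of
\[
-\Delta u=\lambda_1^g(\Omega)\,u+\xi\, g ,
\]
where $\xi=\xi(u,g)\in\R$ is a \emph{fixed} real number. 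The point to keep in mind is that the averaged-Laplacian term of \eqref{pde} contributes only the scalar $\xi$ multiplying the \emph{given} function $g$; hence the right-hand side $\lambda_1^g(\Omega)u+\xi g$ is exactly as regular as $u$ and $g$, and the non-locality plays no role in what follows.

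For part (a), since $g$ is analytic in $\Omega$ it lies in $H^k_{\mathrm{loc}}(\Omega)$ for every $k$. Starting from $u\in H^1(\Omega)$ and applying interior $L^2$ elliptic estimates iteratively (\cite[Theorems~8.8 and 8.10]{gt}, \cite{bjs}) --- at each step the right-hand side $\lambda_1^g(\Omega)u+\xi g$ inherits the Sobolev regularity of $u$, which is then improved by two derivatives --- one gets $u\in H^k_{\mathrm{loc}}(\Omega)$ for every $k$, hence $u\in C^\infty(\Omega)$ by the Sobolev embedding theorem. Now $u\in C^\infty(\Omega)$ solves $-\Delta u-\lambda_1^g(\Omega)u=\xi g$, an elliptic equation with analytic coefficients and analytic right-hand side, so the analyticity theorem for elliptic equations (\cite[Analyticity Theorem, p.~136]{bjs}) applies and yields that $u$ is analytic in $\Omega$.

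For part (b), now $g\in C^\infty(\overline{\Omega})$ and $\partial\Omega$ is of class $C^\infty$. Since $u\in H^1_0(\Omega)$, the function $u$ solves the Dirichlet problem for $-\Delta u=\lambda_1^g(\Omega)u+\xi g$ in $\Omega$ with $u=0$ on $\partial\Omega$. The same bootstrap as above, carried out up to the boundary (\cite[Theorem~8.13]{gt}, \cite[Theorem~8, p.~200]{bjs}) and using that $\partial\Omega\in C^\infty$, gives $u\in H^k(\Omega)$ for every $k$, hence $u\in C^\infty(\overline{\Omega})$. In particular $u\in H^2(\Omega)$, so Proposition~\ref{p.min} shows that $u$ satisfies \eqref{pde} weakly with $\xi$ given by \eqref{eq.xi}; since every function appearing in \eqref{pde} is now smooth up to $\partial\Omega$, the weak identity \eqref{eq.EL} becomes a pointwise one, i.e.\ \eqref{pde} holds in the classical sense.

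I do not expect a genuine obstacle here: the argument is a routine application of standard elliptic regularity, and the only two things that require care are (i) noticing that the non-locality of \eqref{pde} is inessential because it only produces the scalar $\xi$ in front of the fixed, regular datum $g$, and (ii) ordering the reasoning in part (a) correctly --- one must first establish $u\in C^\infty(\Omega)$ by the Sobolev bootstrap and only then invoke the analyticity theorem, which presupposes an a priori smooth (in particular $C^2$) solution.
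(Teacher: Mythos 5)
Your proof is correct and follows the same route the paper intends: the paper does not actually write out a proof of this lemma but simply attributes it to the cited elliptic-regularity results in \cite{bjs}, which is exactly what you do, after making the (important but standard) observation that the non-local term is just a fixed scalar $\xi$ multiplying the given datum $g$ so that the equation is genuinely a constant-coefficient linear elliptic PDE for the purposes of regularity. Your version is simply a fleshed-out form of the paper's one-line justification, with the interior and boundary bootstrap spelled out and the correct ordering (smoothness before analyticity) observed.
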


\begin{proof}
All items follow from classical elliptic regularity results, see for instance \cite{bjs,eva,gt}. More precisely, if $u$ satisfies \eqref{eq.EL} with $g$ bounded,  by \cite[Theorem 8.22]{gt}, there holds (a).
Moreover, by \cite[Analyticity Theorem, p.~136 and Theorem 8, p.~200]{bjs} and \cite[Theorem 2, p. 273]{eva} it follow the remaining items (b) and (c). Finally, by \cite[Theorem 3.2.1.2 and Theorem 2.4.2.5]{gris} there holds item (d).
\end{proof}

\begin{oss}\label{r.atleast}
By item (a) of Lemma~\ref{l.reg} we can always consider the positive and negative parts $u^+$ and $u^-$ of $u$ with their positivity and negativity sets $\Omega^+$ and $\Omega^-$. Of particular importance is the role played by the space of \emph{positive} uniformly bounded functions $L^\infty_+$.
If $g\in L^\infty_+$ then every $u \in \gi$ must change sign in $\oo$, that is neither $u^+$ nor $u^-$ is identically zero, since $\int_{\oo}  gu  \, dx=0$. In particular, every twisted eigenfunction has at least two nodal domains. Recall that the \textit{nodal domains} of a continuous function $u$ defined over $\Omega$ are the connected components of the open set $\Omega\setminus u^{-1}(0)$.
\end{oss}

We first characterize the sign of $\xi$ in \eqref{eq.EL} in terms of the magnitude of the Rayleigh quotients of twisted eigenfunctions restricted to the positivity and negativity sets.

\begin{prop}\label{prop.xi}
If $g\in L^\infty_+$ and $u$ is a twisted eigenfunction corresponding to $\lambda_{1}^{g}(\oo)$ then 
    \begin{equation*}
        \mathcal{R}(u^+) \le \mathcal{R}(u^-)
        \quad \text{if and only if} \quad
       \xi\le 0.
    \end{equation*}
The equality $\mathcal{R}(u^+)=\mathcal{R}(u^-)$ holds if and only if $\xi=0$ (i.e., $u$ is a Dirichlet eigenfunction).
\end{prop}
\begin{proof}
By letting $\psi=u^+/\|u^+\|_2^2$ in \eqref{eq.EL}, the function $u^+$ satisfies
       \begin{equation*}
          \mathcal{R}(u^+)
    =  \lambda_{1}^{g}(\oo) + \frac{\xi}{||u^+||^2_{2}} \int_\oo g(x) u^+(x)  dx,
     \end{equation*}
        while, by letting $\psi=u^-/\|u^-\|_2^2$ in \eqref{eq.EL}, the function $u^-$ satisfies
          \begin{equation*}
          \mathcal{R}(u^-)
    =  \lambda_{1}^{g}(\oo) - \frac{\xi}{||u^-||^2_{2}} \int_\oo g(x) u^-(x)  dx.
     \end{equation*}
Subtracting the second equation from the first, and using the identity $\int_\oo g u^+dx =\int_\oo gu^-dx$, which follows from the condition $u \in \gi$, yields
\begin{equation*}
    \mathcal{R}(u^+) - \mathcal{R}(u^-)
    =\xi \int_\oo g(x) u^+(x)  dx
    \left( \frac{1}{||u^+||^2_{2}} + \frac{1}{||u^-||^2_{2}} \right) ,
\end{equation*}
and the thesis follows from the positivity of $g$ and $u^+$.
\end{proof}

We will also need the following elementary result.

\begin{lem}\label{l.algebraic}
    Let $a_1,a_2,b_1,b_2>0$ with $a_1/a_2 \ge b_1/b_2$. The function $Q\colon \mathbb R^+\to \mathbb R^+$ defined by
    \begin{equation*}
       Q(t)=\frac{a_1t+b_1}{a_2t+b_2}
    \end{equation*}
    is non-decreasing in $\mathbb R^+$. The function $Q$ is strictly increasing if and only if 
    $a_1/a_2 > b_1/b_2$ (otherwise it is constant).
     In particular, there hold
    \begin{equation*}
       \frac{b_1}{b_2}\le \frac{a_1+b_1}{a_2+b_2} \le \frac{a_1}{a_2},
    \end{equation*}
    with equalities if and only if $a_1/a_2 = b_1/b_2$. 
\end{lem}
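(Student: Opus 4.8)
The plan is to reduce everything to the sign of a single $2\times2$ determinant. First I would differentiate $Q$ on $\mathbb R^+$: the quotient rule gives
\[
Q'(t)=\frac{a_1(a_2t+b_2)-a_2(a_1t+b_1)}{(a_2t+b_2)^2}=\frac{a_1b_2-a_2b_1}{(a_2t+b_2)^2},
\]
so the sign of $Q'$ is constant on all of $\mathbb R^+$ and equals the sign of $a_1b_2-a_2b_1$. Since $a_2,b_2>0$, the hypothesis $a_1/a_2\ge b_1/b_2$ is equivalent to $a_1b_2\ge a_2b_1$, i.e.\ to $Q'\ge 0$; hence $Q$ is non-decreasing. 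Moreover $Q'\equiv 0$ precisely when $a_1b_2=a_2b_1$, i.e.\ when $a_1/a_2=b_1/b_2$, in which case $Q$ is constant (indeed $Q\equiv a_1/a_2$); otherwise $a_1b_2>a_2b_1$ and $Q'>0$ throughout, so $Q$ is strictly increasing. This settles the first two assertions.

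For the ``in particular'' chain I would use the boundary behaviour of $Q$. The function extends continuously to $t=0$ with $Q(0)=b_1/b_2$, and $\lim_{t\to+\infty}Q(t)=a_1/a_2$; since $Q$ is non-decreasing on $[0,+\infty)$, we obtain $b_1/b_2\le Q(t)\le a_1/a_2$ for every $t\ge 0$, and evaluating at $t=1$ gives exactly $b_1/b_2\le(a_1+b_1)/(a_2+b_2)\le a_1/a_2$. Equivalently, and avoiding the endpoint, one can check the two inequalities directly through the identities
\[
(a_1+b_1)b_2-(a_2+b_2)b_1=a_1b_2-a_2b_1\ge 0,\qquad a_1(a_2+b_2)-a_2(a_1+b_1)=a_1b_2-a_2b_1\ge 0,
\]
both again reducing to the same determinant. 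Finally, if $a_1/a_2=b_1/b_2$ all three fractions coincide; conversely, if $a_1/a_2>b_1/b_2$ then $a_1b_2-a_2b_1>0$, so by the two identities above both displayed inequalities are strict. Hence equality holds throughout if and only if $a_1/a_2=b_1/b_2$.

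There is essentially no serious obstacle here: the statement is elementary and everything collapses onto the quantity $a_1b_2-a_2b_1$. The only minor point to be careful about is the phrasing of the domain ($\mathbb R^+$ as an open half-line versus the need to invoke the value at $t=0$ to read off the lower bound $b_1/b_2$), which is handled either by a continuous extension to $[0,+\infty)$ or, more cleanly, by the direct determinant identities displayed above.
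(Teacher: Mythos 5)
Your proof is correct. The paper itself supplies no proof of Lemma~\ref{l.algebraic}; it is merely ``recalled'' as a standard elementary fact, so there is no argument in the paper to compare against. Your computation of $Q'(t)=(a_1b_2-a_2b_1)/(a_2t+b_2)^2$, the observation that the hypothesis $a_1/a_2\ge b_1/b_2$ is equivalent (since $a_2,b_2>0$) to $a_1b_2-a_2b_1\ge 0$, and the evaluation of the limits $Q(0^+)=b_1/b_2$, $Q(+\infty)=a_1/a_2$ together with $Q(1)=(a_1+b_1)/(a_2+b_2)$ give exactly what is claimed, including the equality characterization. The supplementary direct identities reducing both displayed inequalities to the same determinant $a_1b_2-a_2b_1$ cleanly sidestep the minor issue of $\mathbb{R}^+$ being an open half-line. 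Nothing is missing.
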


\begin{oss}\label{td}
Let $\Omega=B_+\cup B_-$ where $B_+,B_-$ are disjoint balls with $|B_+|\geq  |B_-|$ and $g=\chi_\alpha=\alpha \chi_{B_+}+\chi_{\mathbb R^d\setminus B_+}$. By \eqref{interwinedTD} and \cite[Remark 1.2.4]{h} there hold
\[
\lambda_1(B_+)\leq \lambda_1^{\chi_\alpha}(B_+\cup B_-)\leq \min\{\lambda_1(B_-),\lambda_2(B_+)\}.
\]
Only in the special case where equality holds in the previous inequalities can a twisted eigenfunction $u$ be a Dirichlet eigenfunction. Indeed, by Proposition~\ref{prop.xi} and Lemma~\ref{l.algebraic}, we have the following necessary and sufficient conditions:
\begin{itemize}
\item[(i)] $\lambda_1^{\chi_\alpha}(B_+\cup B_-)=\lambda_1(B_+)$ or $\lambda_1^{\chi_\alpha}(B_+\cup B_-)=\lambda_1(B_-)$ if and only if $B_+$ and $B_-$ are such that $|B_+|=|B_-|$ with $\lambda_1^{\chi_\alpha}(B_+\cup B_-)=\lambda_2(B_+\cup B_-)$ if and only if $u$ is equal, up to scaling, to $u_1^+-\alpha u_1^-$ (with $u_1^\pm$ the first Dirichlet eigenfunctions in $B_\pm$);
\item[(ii)] $\lambda_1^{\chi_\alpha}(B_+\cup B_-)= \lambda_2(B_+)$ if and only if $u$ is equal, up to scaling, to a Dirichlet eigenfunction $u_2^+$ corresponding to $\lambda_2(B_+)$.
\end{itemize}
\end{oss}

Now, we need a result \emph{\`a la} Courant, concerning the number of nodal domains of twisted eigenfunctions (see Remark~\ref{r.atleast}). This is a very delicate issue and we are able to determine the exact number of nodal domains of twisted eigenfunctions when $\oo$ is the union of two balls and $g$ is of bang-bang type.

\begin{thm}[Nodal domain theorem]\label{simpleCourant}
Let $\Omega=B_+\cup B_-$ where $B_+,B_-$ are disjoint balls with $|B_+|\ge |B_-|$ and $g=\chi_\alpha=\alpha \chi_{B_+}+\chi_{\mathbb R^d\setminus B_+}$. If 
\begin{equation}\label{eq.OmBT<D2}
    \lambda_1^{\chi_\alpha}(B_+\cup B_-) < \lambda_2(B_+),
\end{equation}
then the eigenvalue $\lambda_{1}^{\chi_\alpha}(B_+\cup B_-)$ is simple, the corresponding eigenfunction $u$ is (up to non-zero scalar multiples) radially symmetric, positive, and monotone decreasing (w.r.t. the distance from the center of $B_+$) in
$B_+$, while radially symmetric, negative, and monotone increasing (w.r.t. the distance from the center of $B_-$) in $B_-$, with exactly two nodal domains $\Omega^+=B_+$ and $\Omega^-=B_-$.
\end{thm}
\begin{proof}

Let $u$ be an eigenfunction corresponding to $\lambda:=\lambda_1^{\chi_\alpha}(B_+\cup B_-)$ and denote $u_\pm:=u|_{B_\pm}$ the restrictions of $u$ to each ball. The case $|B_+|=|B_-|$ has already been contained in item (i) of Remark~\ref{td}. Therefore, we may only consider the case $|B_+|>|B_-|$ and divide the proof in three steps.
\smallskip

\emph{Step 1 ($u$ has a nodal domains in $B_+$ and another in $B_-$).}
By Remark~\ref{td} we know that $u$ is not a Dirichlet eigenfunction (i.e. $\xi\neq 0$ in \eqref{eq.EL}) with
\begin{equation}\label{fine}
   \lambda_1(B_+)< \lambda<\lambda_1(B_-).
\end{equation} 
Then, by Remark~\ref{bbs33}, $u$ does not vanish identically on any of the connected components of $B_+\cup B_-$.
In particular $u$ has at least a nodal domains in $B_+$ and another in $B_-$.  

\smallskip
\emph{Step 2 ($u$ is radially symmetric w.r.t. each center of the balls $B_+$ and $B_-$).} By item (b) of Lemma~\ref{l.reg} $u$ is $C^\infty$ in $B_+\cup B_-$ and it satisfies
 \begin{equation}\label{eq.uCourant}
     -\Delta u = \lambda u +\xi\chi_\alpha \quad \text{in } B_+\cup B_-.
 \end{equation} 
Let $I_+$ be an isometry of $\R^d$ with the same center of $B_+$. Let $v_{+}$ be the function obtained by moving isometrically the function $u$ with $I_+$ on the ball $B_+$, that is
\[
v_+(x)= u_+(I_+(x)) + u_-(x),\quad \text{for every $x\in B_+\cup B_-$.} 
\]
 Since $v_+ \in \gi$ and $\mathcal{R}(v_+)=\mathcal{R}(u)$ then $v_+$ is an eigenfunction corresponding to $\lambda$ that satisfies the equation \eqref{eq.uCourant} with the same $\xi$. If, by contradiction, $u-v_{+} \not \equiv 0$ in $B_+\cup B_-$, then by \eqref{eq.uCourant} $u-v_+$ is a Dirichlet eigenfunction of $B_+\cup B_-$ with corresponding eigenvalue $\lambda$. In particular $u_+-(u_+ \circ I_+)$ is a Dirichlet eigenfunction of $B_+$ with corresponding eigenvalue $\lambda$, a contradiction with \eqref{eq.OmBT<D2} and \eqref{fine}. This implies that $u_+$ is radially symmetric in $B_+$. A similar argument applies for $u_-$.

 \smallskip
 \emph{Step 3 ($u$ has nodal domains $B_+$ and $B_-$).}
 Suppose by contradiction that there exists $r>0$ such that on the sphere $\partial B_r \subset B_+$ (with the ball $B_r$ of radius $r$ with same center of $B_+$ strictly contained in $B_+$) the radial derivative of $u_+$ is equal to $0$ (note that this is true in the case of $u_+$ with multiples nodal domains). Now, consider the function $w_+$ defined as
 \[
 w_+(x)=u_+(x)-u_+(x_r) \quad \text{for every $x\in \overline{B_r}$},
 \]
 where $x_r$ is a point on $\partial B_r$. 
By definition, $w_+$ is analytic in $B_r$ and then, by \eqref{eq.uCourant} and the construction of $r$, it satisfies
\[
\begin{cases}
\Delta^2 w_+=-\lambda \Delta w_+, \quad &\text{in } B_r,\\
w_+=(\partial w_+)/(\partial \nu)=0, \quad &\text{on $\partial B_r$}.
\end{cases}
\]
Therefore, $\lambda$ is also a \textit{buckling eigenvalue} of $B_r$ (not only a twisted one), see \cite[Section 15.1, equations (15.1.1)-(15.1.2)]{bw}. We can use the \textit{Payne inequality}, see \cite[Section 15]{bw} or \cite[Section 2, Part A]{pay}, to obtain 
$$\lambda \ge \Lambda_1(B_r) \ge\lambda_2(B_r)>\lambda_2(B_+),$$
where $\Lambda_1(B_r)$ is the first buckling eigenvalue of $B_r$. 
This last inequality contradicts the assumption \eqref{eq.OmBT<D2} and hence the radial derivative of $u_+$ has the same sign in every point of $B_+$. An analogous argument applies also to $u_-$. All in all, it follows that $u_+$ and $u_-$ have constant sign and are radially decreasing with respect to the centers of $B_+$ and $B_-$ respectively.

By Remark~\ref{r.atleast} the function $u$ has to change sign in $B_+\cup B_-$, hence we may assume $u>0$ in $B_+$ and $u<0$ in $B_-$. In particular, this implies that $\lambda$ is a simple eigenvalue. 
\end{proof}

We conclude this section with an interesting result that allows us to determine the sign of $\xi$ in \eqref{eq.EL} when the set $\Omega$ is the union of two disjoint balls and the function $g$ is of bang-bang type. 

\begin{prop}\label{p.rayleigh}
Let $\Omega=B_+\cup B_-$ where $B_+,B_-$ are disjoint balls with $|B_+|\ge |B_-|$ and $g=\chi_\alpha=\alpha \chi_{B_+}+\chi_{\mathbb R^d\setminus B_+}$. If $u$ denote a twisted eigenfunction corresponding to $\lambda_{1}^{g}(\oo)$
with two nodal domains $\Omega^+, \Omega^-$, then 
    \begin{equation*}
|\Omega^+|\ge |\Omega^-| \quad \text{if and only if}\quad  \mathcal{R}(u^{+})\le  \mathcal{R}(u^{-}).
    \end{equation*}
The equality $|\Omega^+|=|\Omega^-|$ holds if and only if $\mathcal{R}(u^+)=\mathcal{R}(u^-)$ (i.e., $u$ is a Dirichlet eigenfunction).
\end{prop}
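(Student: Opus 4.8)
The plan is to reduce the statement, through Proposition~\ref{prop.xi}, to the sign of the number $\xi$ in \eqref{eq.EL}, and then to extract that sign from the equations satisfied by $u$ on each of the two balls. Since $\lambda_1^g(\oo)$ and its eigenfunctions depend on $g$ only through its restriction to $\oo$, which takes values in $[\alpha,1]$, we may assume $g\in L^\infty_+$; moreover $g$ is locally constant, hence analytic, in $\oo$. As $\oo=B_+\cup B_-$ has exactly two connected components, Proposition~\ref{p.CH2} applies: $u$ has exactly two nodal domains $\oo^+,\oo^-$, with $u=u^+-u^-$ and $u^\pm>0$, and by Lemma~\ref{l.reg}(a) $u$ is analytic in $\oo$. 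By Proposition~\ref{prop.xi} it then suffices to prove that $|\oo^+|\ge|\oo^-|$ holds if and only if $\xi\le0$, and $|\oo^+|=|\oo^-|$ holds if and only if $\xi=0$. Write $\lambda:=\lambda_1^g(\oo)$; since $|B_+|\ge|B_-|$, scaling gives $\lambda_1(B_+)\le\lambda_1(B_-)$ and $\lambda_1(\oo)=\lambda_1(B_+)$, so by \eqref{interwinedTD} one has $\lambda\ge\lambda_1(B_+)$. The basic tool is the elementary remark that if $B$ is a ball and $w\in H^1_0(B)$ with $w\ge0$, $w\not\equiv0$, solves $-\Delta w=\mu w+c$ weakly for a constant $c$, then testing this equation against the first Dirichlet eigenfunction $\phi_1>0$ of $B$, and the eigenvalue equation for $\phi_1$ against $w$, gives $(\lambda_1(B)-\mu)\int_B w\phi_1\,dx=c\int_B\phi_1\,dx$; since both integrals are strictly positive, $\lambda_1(B)-\mu$ and $c$ have the same sign and vanish together.

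Each nodal domain, being connected, lies in a single ball. Suppose first that $\oo^+$ and $\oo^-$ lie in the same ball $B_\star$. Then $u\equiv0$ on the other ball $B'$, and testing \eqref{eq.EL} with $\psi\in H^1_0(B')$ gives $\xi\int_{B'}g\psi\,dx=0$; since $g$ is a nonzero constant on $B'$, choosing $\psi$ with $\int_{B'}\psi\,dx\neq0$ forces $\xi=0$. By Proposition~\ref{prop.xi}, $u$ is a Dirichlet eigenfunction of $\oo$ and $\mathcal{R}(u^+)=\mathcal{R}(u^-)$; moreover $u|_{B_\star}$ is a Dirichlet eigenfunction of $B_\star$ that changes sign (it has two nodal domains), so its eigenvalue satisfies $\lambda\ge\lambda_2(B_\star)$. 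On the other hand, extending by zero to $B_-$ an antisymmetric second Dirichlet eigenfunction of $B_+$ produces an element of $H^1_{0,g}(\oo)$ (its $g$-weighted integral over $\oo$ vanishes by antisymmetry) with Rayleigh quotient $\lambda_2(B_+)$, so $\lambda\le\lambda_2(B_+)$; since $|B_\star|\le|B_+|$ gives $\lambda_2(B_+)\le\lambda_2(B_\star)$, we conclude $\lambda=\lambda_2(B_\star)=\lambda_2(B_+)$ and $u|_{B_\star}$ belongs to the second Dirichlet eigenspace of $B_\star$. Recalling that second Dirichlet eigenfunctions of a ball are antisymmetric with respect to a hyperplane through its center, their two nodal domains are congruent half-balls, so $|\oo^+|=|\oo^-|$. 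Hence all quantities in the two equivalences are equalities, and the statement holds in this case.

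Suppose now that $\oo^+$ and $\oo^-$ lie in different balls. On each ball $u$ has a constant sign and is not identically zero, so, being analytic in $\oo$, its zero set has measure zero on each ball; hence the nodal domain contained in $B_+$ has measure $|B_+|$ and the one contained in $B_-$ has measure $|B_-|$. Thus $|\oo^+|\ge|\oo^-|$ precisely when $\oo^+\subseteq B_+$, and $|\oo^+|=|\oo^-|$ precisely when $|B_+|=|B_-|$. If $\oo^+\subseteq B_+$ (so $\oo^-\subseteq B_-$), restricting \eqref{eq.EL} to $H^1_0(B_+)$ shows $u^+=u|_{B_+}$ solves $-\Delta u^+=\lambda u^+ +\alpha\xi$ in $B_+$, so by the remark $\lambda_1(B_+)-\lambda$ and $\alpha\xi$, hence $\lambda_1(B_+)-\lambda$ and $\xi$, have the same sign; since $\lambda\ge\lambda_1(B_+)$ this forces $\xi\le0$, with $\xi=0$ iff $\lambda=\lambda_1(B_+)$. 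The same computation on $B_-$, where $u=-u^-$ and $g\equiv1$, gives $-\Delta u^-=\lambda u^- -\xi$, so $\xi=0$ also forces $\lambda=\lambda_1(B_-)$; therefore $\xi=0$ implies $\lambda_1(B_+)=\lambda_1(B_-)$, i.e. $|B_+|=|B_-|$. Conversely, if $|B_+|=|B_-|$ and $\xi<0$, the remark applied to $u^-$ (with $c=-\xi>0$) gives $\lambda<\lambda_1(B_-)=\lambda_1(B_+)\le\lambda$, impossible, so $\xi=0$. If instead $\oo^-\subseteq B_+$ (so $\oo^+\subseteq B_-$), then $u^+=u|_{B_-}$ solves $-\Delta u^+=\lambda u^++\xi$ in $B_-$ while $u^-=-u|_{B_+}$ solves $-\Delta u^-=\lambda u^- -\alpha\xi$ in $B_+$; were $\xi<0$, the remark would force both $\lambda>\lambda_1(B_-)$ and $\lambda<\lambda_1(B_+)$, contradicting $\lambda_1(B_+)\le\lambda_1(B_-)$, so $\xi\ge0$, and $\xi=0$ iff $\lambda=\lambda_1(B_-)=\lambda_1(B_+)$, i.e. $|B_+|=|B_-|$ (and $|B_+|=|B_-|$ together with $\xi>0$ leads to a contradiction in the same way). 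In every case $|\oo^+|\ge|\oo^-|$ holds iff $\xi\le0$ and $|\oo^+|=|\oo^-|$ holds iff $\xi=0$, which through Proposition~\ref{prop.xi} is exactly the claim.

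The main obstacle is the degenerate configuration in which both nodal domains lie in the same ball: one must notice that $u$ vanishes on the other ball, deduce $\xi=0$, and then identify $u|_{B_\star}$ as a second Dirichlet eigenfunction of that ball (hence antisymmetric, with nodal domains of equal measure), the comparison $\lambda_1^g(\oo)\le\lambda_2(B_+)$ obtained from an explicit competitor being the key input. Once analyticity is used to see that in the remaining case the two nodal domains exhaust the two balls, what is left is a routine two-sided sign chase with the elementary remark.
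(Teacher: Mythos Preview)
Your proof is correct, and it takes a genuinely different route from the paper's.

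The paper first disposes of the equality case, then in the strict case assumes $\Omega^+=B_+$, $\Omega^-=B_-$ and builds explicit competitors by transplanting scaled copies of $u^\pm$ into the other ball; plugging these into \eqref{lambda} produces upper bounds of the form $\lambda_1^{\chi_\alpha}(B_+\cup B_-)\le \frac{a^{-2}+\alpha^2a^d}{1+\alpha^2a^d}\,\mathcal R(u^-)$ and the analogous one with $\mathcal R(u^+)$, and the conclusion is then squeezed out of the algebraic mediant Lemma~\ref{l.algebraic}. You instead reduce everything via Proposition~\ref{prop.xi} to the sign of $\xi$, and extract that sign directly from the PDE by the testing identity $(\lambda_1(B)-\lambda)\int_B w\phi_1\,dx=c\int_B\phi_1\,dx$ on each ball, together with a careful treatment of the degenerate ``same-ball'' configuration (where $\xi=0$ and $u|_{B_\star}$ is a second Dirichlet eigenfunction with congruent half-ball nodal domains).

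What each approach buys: your argument is shorter and more conceptual for this proposition in isolation, avoiding both the rescaling construction and Lemma~\ref{l.algebraic}; it also handles the same-ball degeneracy very cleanly. The paper's competitor construction, on the other hand, yields quantitative inequalities (e.g.\ the displayed bound \eqref{dim22}) that are in the same spirit as the estimates in Lemma~\ref{p.ex1} and Proposition~\ref{p.upper} and so fit naturally into the paper's subsequent use of explicit test functions.
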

\begin{proof}
We first prove the cases of equality. By Proposition~\ref{prop.xi} we know that the equality $\mathcal{R}(u^{+})=  \mathcal{R}(u^{-})$ holds if and only if the twisted eigenfunction is a Dirichlet eigenfunction which, by Remark~\ref{td}, is true if and only if either $\lambda_{1}^{\chi_\alpha}(B_+\cup B_-)=\lambda_2(B_+)$ or  $\lambda_{1}^{\chi_\alpha}(B_+\cup B_-)=\lambda_1(B_+)$. In all cases this turns out equivalent to $|\oo^+|=|\oo^-|$.

Now, in order to prove the double implication for the strict inequalities in the statement, we may assume $|\Omega^+|\neq |\Omega^-|$ which implies, in particular, $\lambda_{1}^{\chi_\alpha}(B_+\cup B_-)<\lambda_2(B_+)$. By Theorem~\ref{simpleCourant}, up to exchanging the sets $\Omega^+$ and $\Omega^-$, it follows that $\Omega^+=B_+$ and $\Omega^-=B_-$; that is $u$ is non-zero in both balls. The strategy of the proof is to test \eqref{lambda} with suitable combinations of $u^+$ or $u^-$. We proceed in two steps.

\smallskip
\emph{Step 1 (upper bounds for the twisted eigenvalue).}
We first use a suitable combination of $u^-$ in the two balls, where on $B_+$ it is appropriately (translated and) scaled so as to belong to $H^1_{0,\chi_\alpha}(B_+\cup B_-)$, and hence is admissible for \eqref{lambda}.
Let $c_+$ and $c_-$ be the centers of the balls $B_+$ and $B_-$. For every $x\in B_+$ define the function $v^{+}(x):={u}^{-}((x-c_++c_-)/a)$ with $a:=(|B_+|/|B_-|)^{1/d}$ (i.e., $v^+$ is $u^-$ rescaled by the factor $a$ and translated so as to belong to $H^1_0(B_+$)). By using Lemma~\ref{l.convex} with $v_1=v^+$ and $v_2=u^-$, then $\gamma=\alpha a^d$ and the linear combination $v:=v^+-\alpha a^d u^-\in H^1_{0,\chi_\alpha}(B_+\cup B_-)$ 
can be plugged into \eqref{lambda} to yield
\begin{equation*}
\begin{split}
\lambda_1^{\chi_\alpha}(B_+\cup B_-)&\le \frac{\int_{B_+}|\nabla v^+|^2dx+ \alpha^2a^{2d} \int_{B_-}|\nabla u^-|^2dx}{ \int_{B_+}(v^+)^2dx+\alpha^2a^{2d} \int_{B_-}(u^-)^2dx}\\&=\frac{a^{d-2}\int_{B_-}|\nabla u^-|^2dx+ \alpha^2 a^{2d}\int_{B_-}|\nabla u^-|^2dx}{a^{d}\int_{B_-}(u^-)^2dx+\alpha^2 a^{2d}\int_{B_-}(u^-)^2dx}=\frac{a^{-2}+\alpha^2a^d}{1+\alpha^2a^d}\mathcal R(u^-),
\end{split}
\end{equation*}
where the first equality is obtained by changing variable with scaling factor $a$.
 Similarly, we may use $v:=u^+-\alpha a^d v^-$ with $v^{-}(x):={u}^{+}(a(x-c_-+c_+))$ for every $x\in B_-$ to obtain
\begin{equation}\label{dim22}
\lambda_1^{\chi_\alpha}(B_+\cup B_-)\le \frac{1+\alpha^2a^{d+2}}{1+\alpha^2a^d}\mathcal R(u^+).
\end{equation}

\smallskip
\emph{Step 2 (double implication for the strict
inequalities).} If $|B_+|>|B_-|$, namely $a>1$, then from the previous inequality $\lambda_1^{\chi_\alpha}(B_+\cup B_-)< \mathcal R(u^-)$. The estimates in Lemma~\ref{l.algebraic} yield
\begin{equation}\label{minmax}
\min\{\mathcal R(u^+),\mathcal R(u^-)\}\le\lambda_1^{\chi_\alpha}(B_+\cup B_-)\le\max\{\mathcal R(u^+),\mathcal R(u^-)\},
\end{equation}
where, in Lemma~\ref{l.algebraic}, $\mathcal R(u^+)=b_1/b_2<a_1/a_2=\mathcal R(u^-)$. On the other hand, if $\mathcal R(u^+)< \mathcal R(u^-)$, then \eqref{minmax} is equivalent to $\mathcal R(u^+)<\lambda_1^{\chi_\alpha}(B_+\cup B_-)$ and combined with \eqref{dim22} yields $(1+\alpha^2a^d)< (1+\alpha^2a^{d+2})$, that is $a>1$.
\end{proof}

By combining Propositions~\ref{prop.xi} with \ref{p.rayleigh} one may deduce the sign of $\xi$ in \eqref{eq.EL} for an eigenfunction with exactly two nodal domains, that is when $\Omega$ is the union of two balls and $g$ is of bang-bang type.

\begin{cor}\label{cor.xi}
Let $\Omega=B_+\cup B_-$ where $B_+$ and $B_-$ are disjoint balls with $|B_+|\ge|B_-|$ and $g=\chi_\alpha=\alpha \chi_{B_+}+\chi_{\mathbb R^d\setminus B_+}$. If $u$ is a twisted eigenfunction corresponding to $\lambda_{1}^{\chi_\alpha}(B_+\cup B_-)$
with two nodal domains $\Omega^+, \Omega^-$ such that $|\Omega^+|\ge |\Omega^-|$,  
 then $\xi\le 0$. 
\end{cor}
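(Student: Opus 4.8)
The plan is to combine the two preceding propositions directly, since Corollary~\ref{cor.xi} is essentially their concatenation. The key observation is that Proposition~\ref{p.rayleigh} applies verbatim under the hypotheses here ($\Omega=B_+\cup B_-$ with $|B_+|\ge|B_-|$, $g=\chi_\alpha$), and it tells us that $|\Omega^+|\ge|\Omega^-|$ is equivalent to $\mathcal R(u^+)\le\mathcal R(u^-)$. So the assumption $|\Omega^+|\ge|\Omega^-|$ immediately gives $\mathcal R(u^+)\le\mathcal R(u^-)$.

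Next, to invoke Proposition~\ref{prop.xi}, I need to check that its hypotheses are met: it requires $g\in L^\infty_+$ and $u$ a twisted eigenfunction with nodal domains $\Omega^\pm$, $u=u^+-u^-$ and $u^\pm>0$. Here $\chi_\alpha=\alpha\chi_{B_+}+\chi_{B_-}$ with $0<\alpha\le 1$ is indeed in $L^\infty_+$ (it is positive on $B_+\cup B_-$ and we may regard it as $0$ outside, but since $u$ is supported in $\Omega$ the orthogonality integral only sees $\chi_\alpha$ on $\Omega$ where it is bounded below by $\alpha>0$; alternatively one notes $\chi_\alpha$ restricted to $\Omega$ plays the role of $g$ in the argument of Proposition~\ref{prop.xi}, whose proof only uses positivity of $g$ and $u^\pm$ on $\Omega$). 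The decomposition into exactly two nodal domains with $u^\pm>0$ is guaranteed either by the hypothesis of the corollary or, more robustly, by Proposition~\ref{p.CH2}, since a union of two disjoint balls has at most two connected components and $\chi_\alpha$ is analytic (locally constant) in $\Omega$.

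Therefore Proposition~\ref{prop.xi} yields the equivalence $\mathcal R(u^+)\le\mathcal R(u^-)\iff\xi\le 0$, and chaining the two equivalences gives $|\Omega^+|\ge|\Omega^-|\Longrightarrow\xi\le 0$, which is the claim. I would write the proof in essentially one line: ``By Proposition~\ref{p.rayleigh}, $|\Omega^+|\ge|\Omega^-|$ implies $\mathcal R(u^+)\le\mathcal R(u^-)$; since $\chi_\alpha\in L^\infty_+$, Proposition~\ref{prop.xi} then gives $\xi\le 0$.''

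I do not anticipate a genuine obstacle here — the corollary is a bookkeeping consequence of results already established. The only point requiring a moment of care is the already-noted compatibility of the class $L^\infty_+$ (defined via positivity a.e.\ on all of $\mathbb R^d$) with the bang-bang function $\chi_\alpha$ which vanishes outside $B_+\cup B_-$; this is harmless because, as used throughout the preliminary section, only the values of $g$ on $\Omega$ enter the orthogonality constraint and the Rayleigh-quotient identities in the proof of Proposition~\ref{prop.xi}, and there $\chi_\alpha\ge\alpha>0$. Hence the positivity hypothesis is effectively satisfied and no modification of the earlier arguments is needed.
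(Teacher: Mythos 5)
Your proof is correct and coincides with the paper's intended argument: the sentence immediately preceding Corollary~\ref{cor.xi} says exactly that it follows from combining Propositions~\ref{prop.xi} and~\ref{p.rayleigh}, which is your chain of equivalences $|\Omega^+|\ge|\Omega^-| \iff \mathcal R(u^+)\le\mathcal R(u^-) \iff \xi\le 0$. Your side remark about $\chi_\alpha$ vanishing outside $\Omega$ yet needing to play the role of a function in $L^\infty_+$ is a fair observation about an implicit convention the paper uses throughout (only the restriction of $g$ to $\Omega$ enters the orthogonality constraint and the identities in the proof of Proposition~\ref{prop.xi}), and you resolve it correctly.
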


\section{Estimates of twisted eigenvalues with Dirichlet eigenvalues}\label{sec.g}

We prove bounds for twisted eigenvalues in terms of Dirichlet ones. We separately analyze the dependence on the function $g$ and on the shape $\Omega$.
 
\subsection{Estimates in terms of the function $g$}

We give necessary and sufficient conditions on the function $g$ under which the lower bound for $\lambda_1^g(\Omega)$ in \eqref{interwinedTD} is attained. We start with a sufficient condition. 

\begin{prop}\label{p.sufficient}
For all $a_1,a_2 \in (0,+\infty]$ there exists $g\in L^\infty(\mathbb R^d)$ with $|\{g>0\}|=a_1$ and $|\{g<0\}|=a_2$ such that 
    \begin{equation*}
         \lambda_{1}^{g}(\oo)=\lambda_1(\oo).
    \end{equation*}
\end{prop}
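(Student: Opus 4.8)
The plan is to exploit the characterization $\lambda_1(\Omega)=\min_g \lambda_1^g(\Omega)$ from \eqref{ming} together with the freedom in the sign of the first Dirichlet eigenfunction on the two nodal-domain pieces. More precisely, I would start from the first Dirichlet eigenfunction $u_1$ of $\Omega$, which is positive and satisfies $\mathcal R(u_1)=\lambda_1(\Omega)$, and try to engineer $g$ so that $u_1$ itself lies in $H^1_{0,g}(\Omega)$; since any $v\in H^1_{0,g}(\Omega)$ satisfies $\lambda_1^g(\Omega)\le \mathcal R(v)$, this would force $\lambda_1^g(\Omega)\le \lambda_1(\Omega)$, and the reverse inequality is \eqref{interwinedTD}. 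The orthogonality $\int_\Omega u_1 g\,dx=0$ with $u_1>0$ necessarily requires $g$ to change sign, which is consistent with the requirement $|\{g>0\}|=a_1$, $|\{g<0\}|=a_2$.

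The key construction is therefore: pick disjoint measurable sets $A_1,A_2\subset\mathbb R^d$ with $|A_1|=a_1$ and $|A_2|=a_2$ (taking them, for definiteness, inside $\Omega$ if the $a_i$ are finite and small, or intersecting them with $\Omega$ in general — the values of $g$ outside $\Omega$ are irrelevant to $\lambda_1^g(\Omega)$, so one just needs $|\{g>0\}|,|\{g<0\}|$ to have the prescribed values globally) and set $g:=s_1\chi_{A_1}-s_2\chi_{A_2}$ for positive constants $s_1,s_2$ to be tuned. Then $\int_\Omega u_1 g\,dx = s_1\int_{A_1\cap\Omega}u_1\,dx - s_2\int_{A_2\cap\Omega}u_1\,dx$, and since $u_1>0$ both integrals $I_1:=\int_{A_1\cap\Omega}u_1\,dx$ and $I_2:=\int_{A_2\cap\Omega}u_1\,dx$ are strictly positive provided $A_1\cap\Omega$ and $A_2\cap\Omega$ have positive measure; choosing $s_1/s_2 = I_2/I_1$ makes the integral vanish, so $u_1\in H^1_{0,g}(\Omega)$ and $g\in L^\infty(\mathbb R^d)$ with the right superlevel/sublevel measures. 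One subtlety to address: one must ensure $A_i\cap\Omega$ has positive measure; since $\Omega$ is open and nonempty and we are free to place $A_1,A_2$ as we like, this is arranged by choosing $A_1,A_2$ to be, say, two disjoint balls contained in $\Omega$ of the appropriate small measure, and then enlarging them outside $\Omega$ (keeping them disjoint and away from each other) to reach the prescribed total measures $a_1,a_2$ — including the case $a_i=+\infty$, where we just let $A_i$ fill up an unbounded region of $\mathbb R^d\setminus\Omega$.

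With this $g$ in hand, the conclusion is immediate: $\lambda_1^g(\Omega)\le \mathcal R(u_1)=\lambda_1(\Omega)$ by \eqref{lambda} applied to the admissible competitor $u_1$, and $\lambda_1^g(\Omega)\ge\lambda_1(\Omega)$ by the left inequality in \eqref{interwinedTD} (or equivalently by \eqref{ming}), so equality holds. I do not expect a genuine obstacle here; the only thing requiring a little care is the bookkeeping for the set measures — in particular handling $a_i=+\infty$ and making sure $g$ has exactly $|\{g>0\}|=a_1$, $|\{g<0\}|=a_2$ and not merely $\ge$ — but this is routine once one decouples the ``part of $g$ inside $\Omega$'' (which must have positive measure to make the orthogonality nontrivial) from the ``part outside $\Omega$'' (which is free and only serves to pad the measures). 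A clean way to phrase it is: fix two disjoint balls $B_1,B_2\Subset\Omega$ of positive measure $\le \min\{a_1,a_2\}$ (shrinking if necessary), then enlarge $B_i$ to a set $A_i\supset B_i$ with $A_i\setminus B_i\subset\mathbb R^d\setminus\overline\Omega$, $|A_i|=a_i$, and $A_1\cap A_2=\emptyset$, and define $g$ as above with $s_1,s_2$ chosen to annihilate $\int_\Omega u_1 g\,dx$.
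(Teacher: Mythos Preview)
Your proposal is correct and follows essentially the same approach as the paper: both construct $g=s_1\chi_{A_1}-s_2\chi_{A_2}$ for disjoint sets $A_1,A_2$ of the prescribed measures meeting $\Omega$ in positive measure, choose the coefficients so that $\int_\Omega u_1 g\,dx=0$ (the paper takes $s_1=\int_{A_2\cap\Omega}u_1\,dx$, $s_2=\int_{A_1\cap\Omega}u_1\,dx$, which realizes exactly your ratio), and conclude by testing \eqref{lambda} with $u_1$ together with \eqref{interwinedTD}. Your extra care about padding the $A_i$ outside $\Omega$ to reach the prescribed global measures (including $a_i=+\infty$) is a welcome clarification the paper leaves implicit.
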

\begin{proof}
If $\Omega$ is connected then consider the first Dirichlet eigenfunction $u_1$ be of $\Omega$ and two disjoint open sets $A_1,A_2\subset \mathbb R^d$ such that $|A_1|=a_1$, $|A_1 \cap \oo|>0$, $|A_2|=a_2$, $|A_2 \cap \oo|>0$. If
    \begin{equation*}
        g=\Big(\int_{A_2 \cap \oo} u_1(x)  dx\Big) \chi_{A_1} -\Big(\int_{A_1 \cap \oo} u_1(x)  dx\Big)\chi_{A_2} \, ,
    \end{equation*}
    then $u_1 \in \gi$. Plugging $u_1$ into \eqref{lambda} and using \eqref{interwinedTD} gives the thesis. 
    
If $\Omega$ is not connected, by \cite[Remark 1.2.4]{h}, an analogous argument applies by restricting to a connected component whose first Dirichlet eigenvalue equals $\lambda_1(\Omega)$.
\end{proof}

We discuss a necessary condition in the case of connected sets.

\begin{prop}\label{p.necessary}
If $\oo$ is connected and $g\in L^\infty_+$, then $$ \lambda_{1}^{g}(\oo)>\lambda_1(\oo).$$
\end{prop}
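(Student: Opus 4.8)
The plan is to argue by contradiction: suppose $\lambda_1^g(\Omega) = \lambda_1(\Omega)$ for some $g \in L^\infty_+$ and connected $\Omega$. Let $u$ be a twisted eigenfunction corresponding to $\lambda_1^g(\Omega)$, which exists by Proposition~\ref{p.min}. Since $g \in L^\infty(\mathbb R^d)$, Proposition~\ref{p.CH} applies, so (after possibly replacing $u$ by the function constructed there) $u$ has at most two nodal domains $\Omega^+, \Omega^-$ with $u = u^+ - u^-$ and $u^\pm > 0$; by Remark~\ref{r.atleast}, since $g \in L^\infty_+$, both $u^+$ and $u^-$ are nontrivial, so $u$ has exactly two nodal domains. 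Now the key point is to exploit the equality $\lambda_1^g(\Omega) = \lambda_1(\Omega)$ together with the strict version of domain monotonicity for the first Dirichlet eigenvalue.

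First I would observe that $\mathcal R(u^+) \ge \lambda_1(\Omega^+)$ and $\mathcal R(u^-) \ge \lambda_1(\Omega^-)$, since $u^\pm \in H^1_0(\Omega^\pm)$, while $\lambda_1(\Omega^\pm) \ge \lambda_1(\Omega)$ by domain monotonicity since $\Omega^\pm \subset \Omega$. On the other hand, from Proposition~\ref{prop.xi} we have $\xi = 0$ if and only if $\mathcal R(u^+) = \mathcal R(u^-)$, and $\xi = 0$ means $u$ is a genuine Dirichlet eigenfunction of $\Omega$; I would use the standard combination argument (Lemma~\ref{l.convex}) to show $\lambda_1^g(\Omega) = \lambda_1(\Omega)$ forces, via the min-max, that $\mathcal R(u^+) = \mathcal R(u^-) = \lambda_1(\Omega)$. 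Indeed testing \eqref{lambda} with $u$ and splitting over the nodal domains gives
\[
\lambda_1(\Omega) = \lambda_1^g(\Omega) = \mathcal R(u) = \frac{\|\nabla u^+\|_2^2 + \|\nabla u^-\|_2^2}{\|u^+\|_2^2 + \|u^-\|_2^2} = \frac{\mathcal R(u^+)\|u^+\|_2^2 + \mathcal R(u^-)\|u^-\|_2^2}{\|u^+\|_2^2 + \|u^-\|_2^2},
\]
a weighted average of $\mathcal R(u^+) \ge \lambda_1(\Omega)$ and $\mathcal R(u^-) \ge \lambda_1(\Omega)$, which can equal $\lambda_1(\Omega)$ only if $\mathcal R(u^+) = \mathcal R(u^-) = \lambda_1(\Omega)$. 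Consequently $u^+$ is a first Dirichlet eigenfunction of $\Omega^+$ with eigenvalue $\lambda_1(\Omega^+) = \lambda_1(\Omega)$, and likewise for $u^-$.

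Then I would derive the contradiction from connectedness. Extending $u^+$ by zero to all of $\Omega$ gives a nonnegative function in $H^1_0(\Omega)$ that is a (weak) eigenfunction: $-\Delta u^+ = \lambda_1(\Omega) u^+$ in $\Omega^+$ and $u^+ \equiv 0$ on $\Omega \setminus \Omega^+$; one checks this extension still satisfies the equation $-\Delta(u^+|_\Omega) = \lambda_1(\Omega)\, u^+|_\Omega$ weakly on all of $\Omega$ because $u^+ = 0$ on $\partial\Omega^+ \cap \Omega$ (the nodal set) in the $H^1_0$ sense. Thus $u^+$, extended by zero, is a first Dirichlet eigenfunction of $\Omega$ that vanishes on the nonempty open set $\Omega^-$ (or more precisely on $\Omega\setminus\Omega^+$, which has nonempty interior since $\Omega^-$ is open and nonempty). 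But the first Dirichlet eigenfunction of a connected set is strictly positive in $\Omega$ (equivalently, by the strong maximum principle, or by uniqueness/simplicity of $\lambda_1$ on connected domains, cf.\ the cited \cite[Theorem 6.34]{bor} / Faber-Krahn rigidity), which contradicts its vanishing on $\Omega^-$. Hence the assumed equality is impossible and $\lambda_1^g(\Omega) > \lambda_1(\Omega)$.

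The main obstacle I anticipate is the careful justification that the zero-extension of $u^+$ is a genuine weak Dirichlet eigenfunction of $\Omega$ (not merely of $\Omega^+$) — that is, showing no spurious distributional Laplacian mass appears on the nodal interface $\partial\Omega^+ \cap \Omega$. This is a standard but slightly delicate point: it follows from $u^+ \in H^1_0(\Omega^+) \subset H^1_0(\Omega)$ and the fact that the weak formulation \eqref{eq.lambda}–\eqref{eq.EL} for $u$ restricted to test functions supported in $\Omega^+$ already encodes the PDE there, combined with $\int_\Omega g u^+\,dx = \int_\Omega g u^-\,dx$ and the observation that testing the twisted equation against $u^+$ itself recovers $\mathcal R(u^+) = \lambda_1(\Omega)$ exactly (so $\xi\int_\Omega g u^+ = 0$, forcing $\xi = 0$). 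Once $\xi = 0$ is established, $u$ is literally a Dirichlet eigenfunction of $\Omega$ with two nodal domains and eigenvalue $\lambda_1(\Omega)$, which already contradicts the simplicity/positivity of the first Dirichlet eigenfunction on a connected set — so in fact the cleanest route is: equality $\Rightarrow \xi = 0 \Rightarrow u$ is a sign-changing Dirichlet eigenfunction for $\lambda_1(\Omega)$ on connected $\Omega$, impossible.
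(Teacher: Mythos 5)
Your proposal is correct, and in the end it reaches the same contradiction the paper does, but by a more circuitous route. The paper's proof is shorter and sidesteps everything about nodal domains, $\xi$, and zero-extensions: since $u\in H^1_{0,g}(\Omega)\subset H^1_0(\Omega)$ achieves $\mathcal R(u)=\lambda_1^g(\Omega)=\lambda_1(\Omega)$, the function $u$ already minimizes the \emph{unconstrained} Rayleigh quotient on $H^1_0(\Omega)$, hence is a first Dirichlet eigenfunction of $\Omega$. Simplicity of $\lambda_1$ on the connected set $\Omega$ forces $u=\pm u_1$, and since $u_1>0$ (unique continuation) and $g>0$ a.e., $\int_\Omega u g\,dx\neq 0$, contradicting $u\in H^1_{0,g}(\Omega)$. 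That single observation collapses your entire first paragraph (the nodal-domain weighted-average computation, $\lambda_1(\Omega^\pm)=\lambda_1(\Omega)$, the extension-by-zero argument and your worry about distributional mass on the nodal interface) into one line. The ``cleanest route'' you identify in your last paragraph --- deducing $\xi=0$ from $\mathcal R(u^+)=\lambda_1(\Omega)$ and then noting $u$ is a sign-changing $\lambda_1$-eigenfunction, impossible on a connected domain --- is valid and is essentially a nodal-domain disguise of the paper's argument; both exploit that the first Dirichlet eigenfunction on a connected set is sign-definite. What the extra detour through Propositions~\ref{p.CH}, \ref{prop.xi} and the weighted-average step costs you is length and the technical worry you flag; what it might buy you is a template that could survive in settings where $\lambda_1(\Omega)$ is not known to be simple (so one cannot say $u=\pm u_1$ directly), but in the present connected setting the paper's direct appeal to simplicity is the right tool.
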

\begin{proof}
We proceed by contradiction, assuming that $\lambda_{1}^{g}(\oo)=\lambda_1(\oo)$. Let $u_1$ be the first Dirichlet eigenfunction of $\oo$, and let $u$ be a normalized in $L^2(\Omega)$  twisted eigenfunction corresponding to $\lambda_{1}^{g}(\oo)$, chosen such that $\int_\Omega u(x)u_1(x) dx \ge 0$. Since the first Dirichlet eigenvalue is simple on the connected set $\Omega$, and by the assumption $\lambda_{1}^{g}(\oo)=\lambda_1(\oo)$, it follows that $u=u_1$ (recall the choice $\int_\Omega u u_1dx\ge0$). Moreover, it is known that $u_1$ does not change sign (see \cite[Theorem 6.34]{bor}). Therefore,
    \begin{equation*}
        \int_{\oo} u(x) g(x)  dx= \int_{\oo} u_1(x) g(x)  dx> 0,
    \end{equation*}
which contradicts the fact that $\int_\Omega u(x)g(x)dx=0$. The thesis follows from \eqref{interwinedTD}.
\end{proof}

\begin{oss}\label{r.necessary}
Proposition~\ref{p.necessary} is no longer true if the connectedness assumption on $\Omega$ is removed: there exists a disconnected set $\Omega\in\mathcal O$ and $g\in L^\infty_+$, for which the lower bound for $\lambda_1^g(\Omega)$ in \eqref{interwinedTD} is attained. For instance, one may consider $\oo=B^\wedge \cup B^\vee$, where $B^\wedge, B^\vee$ are two disjoint balls of equal measure and then \eqref{eq.ind} holds.
\end{oss}

Proposition~\ref{p.sufficient} shows that the lower bound for $\lambda_1^g(\Omega)$ in \eqref{interwinedTD} can be attained by a bounded function $g$ that is allowed to change sign.
In view of Proposition~\ref{p.necessary}, one may wonder whether the infimum of $\lambda_1^g(\Omega)$, when restricted to positive functions $g\in L^\infty_+$, is strictly greater that $\lambda_1(\Omega)$. The following proposition addresses this question and shows that the answer is negative.

\begin{prop}\label{p.espilon}
Given $\epsilon>0$, there exist $\alpha>0$ and $g\in L_\alpha^\infty$ such that
    \begin{equation*}
        \lambda_{1}^{g}(\oo) \le \lambda_1(\Omega) + \epsilon \, .
    \end{equation*} 
\end{prop}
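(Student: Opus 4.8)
The plan is to construct, for each $\epsilon>0$, a \emph{bang--bang} orthogonality function $g=\alpha+(1-\alpha)\chi_D\in L^\infty_\alpha$ — equal to $1$ on a small ball $D\subset\oo$ and to $\alpha$ elsewhere — together with a test function admissible in \eqref{lambda} whose Rayleigh quotient exceeds $\lambda_1(\oo)$ by at most $\epsilon$. The point to overcome is that $g\mapsto\lambda_1^g(\oo)$ is not continuous enough to allow a naive perturbation of a minimizer of \eqref{ming}: taking $g$ to be a small positive constant gives, by \eqref{invariance2} and \eqref{g.t}, $\lambda_1^g(\oo)=\lambda_1^T(\oo)$, which in general is strictly larger than $\lambda_1(\oo)$. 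Following the heuristics sketched in the Introduction, the right choice concentrates the ``mass $1$'' of $g$ on a vanishing ball $D$ placed where the first Dirichlet eigenfunction is negligible, and corrects the resulting orthogonality defect by a small bump supported in $D$.

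In detail, let $\oo_0$ be a connected component of $\oo$ with $\lambda_1(\oo_0)=\lambda_1(\oo)$ and let $u_1$ be its positive, $L^2$-normalized first Dirichlet eigenfunction, extended by $0$, so that $\mathcal{R}(u_1)=\lambda_1(\oo)$. First I would show that the essential infimum of $u_1$ on $\oo_0$ vanishes: otherwise $u_1\ge m>0$ a.e.\ on $\oo_0$, whence $\min(u_1,s)\equiv s$ on $\oo_0$ for every $s\in(0,m)$; since truncations of $H^1_0$ functions remain in $H^1_0$, this would exhibit a nonzero constant in $H^1_0(\oo_0)$, contradicting the Poincaré inequality. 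As $u_1$ is continuous in $\oo_0$, for every $s>0$ the set $\{u_1<s\}\cap\oo_0$ is open and nonempty, hence contains a ball $D=B(y,r)$ with $r\le 1$. I would then fix a bump $\phi\in C^\infty_c(D)$ with $0\le\phi\le1$ and $\phi\equiv1$ on $B(y,r/2)$ (so $\int_D\phi\ge 2^{-d}|D|$), obtained by rescaling a fixed profile $\phi_1\in C^\infty_c(B_1)$ depending only on $d$; set $g:=\alpha+(1-\alpha)\chi_D\in L^\infty_\alpha$ and take as test function $v:=u_1-\beta\phi$, where $\beta:=\bigl(\int_\oo u_1 g\,dx\bigr)/\bigl(\int_\oo\phi g\,dx\bigr)$ is chosen exactly so that $v\in\gi$. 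Since $\phi$ is supported in $D$ and $u_1<s$ there, one has $\int_\oo\phi g\,dx=\int_D\phi\,dx$ and $\int_\oo u_1 g\,dx=\alpha\int_{\oo_0}u_1\,dx+(1-\alpha)\int_D u_1\,dx\le\alpha\|u_1\|_1+s|D|$, so that $\beta\le 2^{d+1}s$ as soon as $\alpha$ is chosen small enough (depending on $s$ and $D$); also $v\in\gi\setminus\{0\}$ for $\beta$ small, since $\|v\|_2\ge1-\beta\|\phi\|_2$.

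It remains to estimate $\mathcal{R}(v)$. Because $\phi\in C^\infty_c(\oo)$, the weak eigenvalue equation gives $\int_\oo\nabla u_1\cdot\nabla\phi\,dx=\lambda_1(\oo)\int_\oo u_1\phi\,dx$, so the terms linear in $\beta$ cancel and a direct computation yields
\[
\mathcal{R}(v)=\lambda_1(\oo)+\frac{\beta^2\bigl(\|\nabla\phi\|_2^2-\lambda_1(\oo)\|\phi\|_2^2\bigr)}{\|v\|_2^2}\le\lambda_1(\oo)+\frac{\beta^2\|\nabla\phi\|_2^2}{\|v\|_2^2}.
\]
This is the only place where the hypothesis $d\ge2$ enters essentially: by scaling, $\|\nabla\phi\|_2^2=r^{d-2}\|\nabla\phi_1\|_2^2\le\|\nabla\phi_1\|_2^2$ since $r\le1$ and $d-2\ge0$, so the energy of the corrector does \emph{not} blow up as $D$ shrinks (in $d=1$ it would behave like $r^{-1}$, which is precisely why that case is postponed). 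Together with $\|v\|_2^2\ge 1/2$ — valid once $\beta$ is small, as $\|\phi\|_2\le\|\phi_1\|_2$ — and with $\beta\le 2^{d+1}s$, this gives $\mathcal{R}(v)\le\lambda_1(\oo)+C_d\|\nabla\phi_1\|_2^2\,s^2$ for a dimensional constant $C_d$. Hence, given $\epsilon>0$, one fixes \emph{at the outset} $s>0$ small enough that $C_d\|\nabla\phi_1\|_2^2\,s^2\le\epsilon$ (and small enough for the two auxiliary smallness conditions, which involve only the fixed profile $\phi_1$), then carries out the construction above and concludes from $\lambda_1^g(\oo)\le\mathcal{R}(v)$. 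The main obstacle is exactly the one singled out above: the lack of continuity of $g\mapsto\lambda_1^g(\oo)$ forces $g$ to concentrate on a vanishing set, and one must then check that restoring the single orthogonality constraint costs an arbitrarily small amount of Dirichlet energy — which is what $d\ge2$ guarantees.
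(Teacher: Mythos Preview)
Your proof is correct and takes a genuinely different route from the paper's. The paper removes a small ball $B(r)$ centred on a boundary point of $\oo$, invokes the continuity $\lambda_1(\oo\setminus\overline{B(r)})\to\lambda_1(\oo)$ as $r\to0^+$, sets $g=\alpha\chi_{\oo\setminus\overline{B(r)}}+\chi_{\oo\cap B(r)}$, and tests with the linear combination $u_r^+-\gamma u_r^-$ of the first Dirichlet eigenfunctions on the two pieces (via Lemma~\ref{l.convex}); letting $\alpha\to0^+$ makes $\gamma\to0^+$ and the Rayleigh quotient collapse to $\lambda_1(\oo\setminus\overline{B(r)})$. By contrast, you place the small ball $D$ in the \emph{interior} of $\oo_0$ where the exact eigenfunction $u_1$ is small, and correct $u_1$ itself by a single scaled bump. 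Your approach avoids any appeal to stability of $\lambda_1$ under domain perturbation (a nontrivial ingredient in the paper) and is fully quantitative, producing an explicit bound $\lambda_1^g(\oo)\le\lambda_1(\oo)+C_d\|\nabla\phi_1\|_2^2\,s^2$; it also isolates cleanly the role of $d\ge2$ through the scaling $\|\nabla\phi\|_2^2=r^{d-2}\|\nabla\phi_1\|_2^2$. The paper's argument is shorter and piggybacks on the already--available Lemma~\ref{l.convex}, but yours is more self-contained and gives more information.
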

\begin{proof}
 Assume that $\Omega$ is connected. For a given $x_0\in\partial \Omega$ consider the open ball $B(r)$ centered at $x_0$ of radius $r>0$ sufficiently small such that 
  \begin{equation}\label{hip}
         \lambda_{1}(\Omega\setminus \overline B(r)) \le \lambda_1(\Omega) +\epsilon/2 \,,
\end{equation}
(by the continuity of the first Dirichlet eigenvalue, see for instance \cite[Theorem~3.2.7 and Corollary 4.7.4]{hp}, $\lambda_1(\Omega \setminus \overline B(\rho)) \rightarrow \lambda_1(\Omega)$ as $\rho \rightarrow 0^+$ and such an $r$ always exists). Let $u_r^{+}$ and $u_r^{-}$ be, respectively, two first Dirichlet eigenfunctions corresponding to $\lambda_1(\Omega\setminus \overline{B}(r))$ and $\lambda_1(\Omega\cap B(r))$. For a given $\alpha>0$, let $g=\alpha\chi_{\Omega\setminus \overline{B}(r)} + \chi_{\mathbb R^d\setminus(\Omega\setminus \overline{B}(r))}$ and consider as in \eqref{e.convex} the linear combination $u_r:=u_{r}^+-\gamma u_{r}^-$ with $\gamma=\alpha\|u_r^+\|_1/\|u_r^-\|_1$. By Lemma~\ref{l.convex} $u_r\in\gi$, thus it can be used as test function in \eqref{lambda} yielding
    \begin{equation*}
        \lambda_{1}^{g}(\Omega)
        \le \mathcal R(u_r)= \frac{\int_\Omega |\nabla u_{r}^+(x)|^2dx  +\gamma^2 \int_\Omega |\nabla u_{r}^-(x)|^2dx }{\int_\Omega u_r^+(x)^2 dx+\gamma^2 \int_\Omega u_{r}^-(x)^2dx}.
    \end{equation*}
As $\alpha \rightarrow 0^+$, by definition $\gamma\to 0^+$, and the quantity in the right-hand side tends to $\lambda_1(\Omega\setminus \overline B(r))$. Therefore, there exists a sufficiently small $\alpha>0$ such that
    \begin{equation*}
        \lambda_{1}^{g}(\Omega) 
        \le \lambda_1(\Omega\setminus \overline B(r)) +\epsilon/2.
    \end{equation*}
     One obtains the thesis by combining the previous inequality with \eqref{hip}.
     
     As in the proof of the previous Proposition~\ref{p.necessary}, if $\Omega$ is not connected, by \cite[Remark 1.2.4]{h}, an analogous argument applies by restricting to a connected component whose first Dirichlet eigenvalue equals $\lambda_1(\Omega)$.
\end{proof}

As a consequence of the arbitrariness of $\epsilon$ in Proposition~\ref{p.espilon} it follows that
\begin{equation}\label{infL+}
    \inf_{g\in L^\infty_+} \lambda_1^g(\Omega) =\lambda_1(\Omega).
\end{equation}
Moreover, by Proposition~\ref{p.necessary}, if $\Omega$ is connected, the infimum is not a minimum -- that is, it is not attained by any bounded positive function. This equality also implies that a uniform lower bound for $\lambda_1^g(\Omega)$, including with respect to variations of $\Omega$, can be obtained via \eqref{FK}. The situation becomes more interesting when the infimum is restricted to the class $L^\infty_\alpha$ of \emph{uniformly positive} and \emph{uniformly bounded} functions, see \eqref{Lalpha}.

\begin{oss}\label{scalclass}
In order to carry out a finer analysis, to determine when the minimum is not attained in \eqref{problem}, it could be useful to consider the constrained formulation and consider the class 
$$\{ g \in L^\infty_{\alpha} \, : \, |\{g=\alpha\}| \le c \},$$
for some $c>0$.
Notice that this family is not closed under scalings, in contrast to $L^\infty_{\alpha}$.
\end{oss}

We conclude this subsection with a monotonicity result involving bang-bang functions as orthogonality constraints in the class $L^\infty_\alpha$.

\begin{prop}\label{p.monotonicity2}
Let $g\in L^\infty_\alpha$ and $u$ be a twisted eigenfunction corresponding to $\lambda_{1}^{g}(\Omega)$. Then there exist $\alpha_1,\alpha_2 \in [\alpha,1]$ such that 
$$\lambda_1^g(\Omega)\ge \lambda_1^{\chi}(\Omega),$$
where $\chi=\alpha_1\chi_{\Omega^+}+\alpha_2\chi_{\mathbb R^d\setminus \Omega^+}\in L^\infty_\alpha$.
Equality holds if and only if $u$ is a twisted eigenfunction corresponding to $\lambda_{1}^{\chi}(\Omega)$.

If moreover $\mathcal R(u^+)\le \mathcal R(u^-)$, then 
$$\lambda_1^g(\Omega)\ge \lambda_1^{\chi_\alpha}(\Omega),$$
where $\chi_\alpha= \alpha \chi_{\Omega^+}+\chi_{\Omega^-}$.
Equality holds only if either $g=\chi_\alpha$ a.e. in $\Omega$ or $\mathcal R(u^+)= \mathcal R(u^-)$.
\end{prop}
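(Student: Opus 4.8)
The plan is to exploit the orthogonality $\int_\Omega ug\,dx=0$ of the given twisted eigenfunction $u$, which, since $u^\pm>0$ on the nodal domains $\Omega^\pm$, can be rewritten as $\int_{\Omega^+}u^+g\,dx=\int_{\Omega^-}u^-g\,dx=:c>0$. From $u$ itself, or from a suitable rescaling of its two pieces $u^\pm$, I will build a function that is still admissible in the variational problem \eqref{lambda} associated to a bang-bang orthogonality weight, i.e.\ one supported on $\Omega^+\cup\Omega^-$ and constant on each of the two nodal domains, and then compare Rayleigh quotients.

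For the first assertion I set $\alpha_1:=c\big/\int_{\Omega^+}u^+\,dx$ and $\alpha_2:=c\big/\int_{\Omega^-}u^-\,dx$, that is, the averages of $g$ over $\Omega^\pm$ weighted by $u^\pm$. Since $\alpha\le g\le 1$ a.e.\ and $u^\pm>0$ on $\Omega^\pm$, these numbers lie in $[\alpha,1]$ and $\chi:=\alpha_1\chi_{\Omega^+}+\alpha_2\chi_{\Omega^-}\in L^\infty_\alpha$. By the choice of $\alpha_1,\alpha_2$ one has $\int_\Omega u\chi\,dx=\alpha_1\int_{\Omega^+}u^+\,dx-\alpha_2\int_{\Omega^-}u^-\,dx=c-c=0$, so $u\in H^1_{0,\chi}(\Omega)\setminus\{0\}$ is admissible for $\lambda_1^\chi(\Omega)$, whence $\lambda_1^\chi(\Omega)\le\mathcal R(u)=\lambda_1^g(\Omega)$ by \eqref{lambda}. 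Equality in this chain means precisely that $u$ attains the minimum defining $\lambda_1^\chi(\Omega)$, i.e.\ that $u$ is a twisted eigenfunction for $\chi$; conversely, if $u$ is such an eigenfunction, then $\lambda_1^\chi(\Omega)=\mathcal R(u)=\lambda_1^g(\Omega)$.

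For the second assertion, assume $\mathcal R(u^+)\le\mathcal R(u^-)$ and put $\gamma:=\alpha\int_{\Omega^+}u^+\,dx\big/\int_{\Omega^-}u^-\,dx>0$. The decisive elementary fact is that $\gamma\le 1$, which follows from $\alpha\int_{\Omega^+}u^+\,dx\le\int_{\Omega^+}u^+g\,dx=\int_{\Omega^-}u^-g\,dx\le\int_{\Omega^-}u^-\,dx$, using $g\ge\alpha$, the orthogonality identity, and $g\le 1$. The competitor $w:=u^+-\gamma u^-\in H^1_0(\Omega)\setminus\{0\}$ (nonzero because $u^+$ and $u^-$ have disjoint nontrivial supports) satisfies $\int_\Omega w\,\chi_\alpha\,dx=\alpha\int_{\Omega^+}u^+\,dx-\gamma\int_{\Omega^-}u^-\,dx=0$, so it is admissible for $\lambda_1^{\chi_\alpha}(\Omega)$. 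Using the disjointness of the supports, $\mathcal R(w)=\big(\mathcal R(u^+)\|u^+\|_2^2+\gamma^2\mathcal R(u^-)\|u^-\|_2^2\big)\big/\big(\|u^+\|_2^2+\gamma^2\|u^-\|_2^2\big)$, which equals $Q(\gamma^2)$ for the function $Q$ of Lemma~\ref{l.algebraic} with $a_1=\mathcal R(u^-)\|u^-\|_2^2$, $a_2=\|u^-\|_2^2$, $b_1=\mathcal R(u^+)\|u^+\|_2^2$, $b_2=\|u^+\|_2^2$, while $\mathcal R(u)=Q(1)$; since $a_1/a_2=\mathcal R(u^-)\ge\mathcal R(u^+)=b_1/b_2$, $Q$ is non-decreasing, and $\gamma^2\le 1$ gives $\lambda_1^{\chi_\alpha}(\Omega)\le\mathcal R(w)=Q(\gamma^2)\le Q(1)=\mathcal R(u)=\lambda_1^g(\Omega)$.

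It remains to treat the equality case of the second assertion. Suppose $\lambda_1^g(\Omega)=\lambda_1^{\chi_\alpha}(\Omega)$ and $\mathcal R(u^+)\neq\mathcal R(u^-)$, i.e.\ $\mathcal R(u^+)<\mathcal R(u^-)$; then $Q$ is \emph{strictly} increasing by Lemma~\ref{l.algebraic}, so $Q(\gamma^2)=Q(1)$ forces $\gamma=1$ and hence $w=u$. Thus $u\in H^1_{0,\chi_\alpha}(\Omega)$ with $\mathcal R(u)=\lambda_1^{\chi_\alpha}(\Omega)$, so $u$ is a twisted eigenfunction for $\chi_\alpha$ as well; writing \eqref{eq.EL} for $u$ with respect to both weights $g$ and $\chi_\alpha$ — the eigenvalue being the same $\lambda_1^g(\Omega)$ — and subtracting, one gets $\int_\Omega(\xi\,g-\xi'\,\chi_\alpha)\,\psi\,dx=0$ for all $\psi\in H^1_0(\Omega)$, hence $\xi\,g=\xi'\,\chi_\alpha$ a.e.\ in $\Omega$ for the associated constants $\xi,\xi'\in\R$. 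Since $\mathcal R(u^+)<\mathcal R(u^-)$ forces $\xi<0$ by Proposition~\ref{prop.xi}, this reads $g=c\,\chi_\alpha$ a.e.\ in $\Omega$ with $c:=\xi'/\xi$; because $g\ge\alpha>0$ a.e.\ while $\chi_\alpha$ vanishes off $\Omega^+\cup\Omega^-$, the nodal set must have zero measure, and comparing $g=c\alpha\ge\alpha$ on $\Omega^+$ with $g=c\le 1$ on $\Omega^-$ forces $c=1$, i.e.\ $g=\chi_\alpha$ a.e.\ in $\Omega$. I expect this last upgrade — from $g=\chi_\alpha$ on $\Omega^+\cup\Omega^-$, which already falls out of examining the equality cases in $\gamma\le 1$, to $g=\chi_\alpha$ on all of $\Omega$ — to be the only genuinely delicate point: the pointwise test-function bookkeeping says nothing about $g$ on the nodal set, and the non-local Euler-Lagrange equation \eqref{eq.EL} is really needed to rule out a nodal set of positive measure.
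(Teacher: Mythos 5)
Your proof is correct and follows the paper's strategy — same choice of the weighted averages $\alpha_1,\alpha_2$ for the first part, same competitor $w=u^+-\gamma u^-$ with the same $\gamma$ and the same invocation of Lemma~\ref{l.algebraic} for the second. However, you tighten two points the paper glosses over. First, to apply the monotonicity of $Q$ one needs $\gamma^2\le 1$; the paper never verifies this, while you derive $\gamma\le 1$ cleanly from the chain $\alpha\int_{\Omega^+}u^+\,dx\le\int_{\Omega^+}u^+g\,dx=\int_{\Omega^-}u^-g\,dx\le\int_{\Omega^-}u^-\,dx$. Second, and more significantly, the paper treats the implication $\gamma=1\Rightarrow g=\chi_\alpha$ a.e.\ in $\Omega$ as immediate, but a direct argument from $\gamma=1$ only yields $g=\alpha$ a.e.\ on $\Omega^+$ and $g=1$ a.e.\ on $\Omega^-$, i.e.\ $g=\chi_\alpha$ on $\Omega^+\cup\Omega^-$. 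Since $g\ge\alpha>0$ while $\chi_\alpha$ vanishes on the nodal set $\Omega\setminus(\Omega^+\cup\Omega^-)$, which for a general $g\in L^\infty_\alpha$ need not have zero measure (the analyticity of Lemma~\ref{l.reg}(a) is unavailable here), the jump to ``a.e.\ in $\Omega$'' requires justification. Your route — showing that $w=u$ makes $u$ a twisted eigenfunction for \emph{both} weights, subtracting the two instances of \eqref{eq.EL} to get $\xi g=\xi'\chi_\alpha$ a.e.\ in $\Omega$, then using $\xi<0$ (from Proposition~\ref{prop.xi}) and $g\ge\alpha$ to force the nodal set to have zero measure and $\xi'/\xi=1$ — supplies exactly the missing ingredient. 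This is a genuine improvement: it proves, as a byproduct, that the nodal set has zero measure in the equality case, rather than assuming it.
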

\begin{proof}
Let $\alpha_1,\alpha_2 \in [\alpha,1]$ be such that $\| u^+g\|_1=\alpha_1\|u^+\|_1$ and $\| u^-g\|_1=\alpha_2\|u^-\|_1$, and define $\chi=\alpha_1\chi_{\Omega^+}+\alpha_2\chi_{\mathbb R^d\setminus\Omega^+}\in L^\infty_\alpha$. Then $u \in H^1_{0,\chi}(\Omega)$ and it can be used in \eqref{lambda} to obtain
\begin{equation*}
\lambda_1^{\chi}(\Omega)\le \frac{\int_{\oo^+} |\nabla u^+(x)|^2  dx+\int_{\oo^-} |\nabla u^-(x)|^2  dx}{\int_{\oo^+}  u^+(x)^2  dx+\int_{\oo^-}  u^-(x)^2  dx}= \lambda_1^{g}(\Omega),
\end{equation*}
which proves the first inequality.
Now, assume $\mathcal R(u^+)\le \mathcal R(u^-)$ and define the linear combination $v$ as in \eqref{e.convex} with $v_1=u^+$, $v_2=u^-$, and $g=\chi_\alpha$ there. Notice that, by the fact that $\alpha\le g\le 1$ a.e. and the orthogonality constraint
\[
\alpha\int_{\Omega_+} u^+dx=\int_{\Omega_+} u^+gdx=\int_{\Omega_-} u^-gdx\le\int_{\Omega_-} u^-dx,
\]
which implies that $\gamma$ as defined in  \eqref{e.convex} is such that $\gamma<1$.
By Lemma~\ref{l.convex} the function $v\in  H^1_{0,\chi_\alpha}(\Omega)$ and it can be used in \eqref{lambda} to obtain
\begin{equation*}
\lambda_1^{\chi_\alpha}(\Omega)\le \frac{\int_{\oo^+} |\nabla u^+(x)|^2  dx+\gamma^2\int_{\oo^-} |\nabla u^-(x)|^2  dx}{\int_{\oo^+}  u^+(x)^2  dx+\gamma^2\int_{\oo^-}  u^-(x)^2  dx}=Q(\gamma^2),
\end{equation*}
with $Q$ as in Lemma~\ref{l.algebraic} and $a_1/a_2=\mathcal R(u^-)\ge\mathcal R(u^+)=b_1/b_2$ by hypothesis, so that 
\begin{equation*}
\lambda_1^{\chi_\alpha}(\Omega)\le Q(\gamma^2)\le Q(1)=\lambda_1^g(\Omega).
\end{equation*}
If $\lambda_1^{\chi_\alpha}(\Omega)=\lambda_1^g(\Omega)$ then all the previous inequalities 
are equalities, in particular $Q(\gamma^2)=Q(1)$ means $\mathcal R(u^+)=\mathcal R(u^-)$ or $\gamma=1$, namely $g=\chi_\alpha$ a.e. in $\Omega$. 
\end{proof}

\begin{oss}
Notice that the equality condition in Proposition~\ref{p.monotonicity2} is only a \emph{necessary condition} but not a sufficient one. However, if $\mathcal R(u^+)<\mathcal R(u^-)$, then the equality $\lambda_1^g(\Omega)= \lambda_1^{\chi_\alpha}(\Omega)$ holds if and only if $g = \chi_\alpha$ a.e. in $\Omega$.
\end{oss}

\subsection{Estimates also in terms of the domain $\Omega$}

As a consequence of the results in the previous sections we now focus on the shape dependence. 

\begin{lem}\label{p.ex1}
If $\Omega=B_+\cup B_-$ where $B_+$ and $B_-$  are disjoint balls with $|B_+|\ge|B_-|$, and $u$ is a twisted eigenfunction corresponding to $\lambda_{1}^{\chi_\alpha}(B_{+} \cup B_{-})$ with $\chi_\alpha=\alpha\chi_{B_+}+\chi_{\mathbb R^d\setminus B_+}$,  then 
\begin{equation*}
    |B_+|^\frac{2}{d}\lambda_{1}^{\chi_\alpha}(B_{+} \cup B_{-})\le{|B|^\frac{2}{d}}\lambda_2(B),
\end{equation*}
where $B$ is any ball in $\mathbb R^d$. 
\end{lem}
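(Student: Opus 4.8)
The plan is to exploit the fact that the first twisted eigenfunction $u$ of $B_+\cup B_-$ has at most two nodal domains (by Proposition~\ref{p.CH2}, since $B_+\cup B_-$ has two connected components and $\chi_\alpha\in L^\infty_+$ is analytic on $\Omega$), together with the monotonicity of twisted eigenvalues under set inclusion. First I would localize: the two nodal domains $\Omega^+,\Omega^-$ of $u$, being disjoint connected open sets lying inside $B_+\cup B_-$, must satisfy one of two mutually exclusive situations — either both are contained in the \emph{same} ball (necessarily the larger one $B_+$, because $\Omega^+\cup\Omega^-$ occupies "most" of that ball in the sense made precise below), or each nodal domain equals exactly one of the two balls (using analyticity and unique continuation: if a nodal domain touches an entire connected component it fills it). Actually, by analyticity of $u$ in $\Omega$ (item (a) of Lemma~\ref{l.reg}), within any connected component of $\Omega$ the eigenfunction cannot vanish on an open subset, so each of $B_+$ and $B_-$ either contains a single nodal domain equal to it, or contains parts of both; a careful case analysis combined with "there are exactly two nodal domains" leaves precisely two scenarios: (i) $\Omega^+\cup\Omega^-\subset B_+$ (up to relabelling), or (ii) $\Omega^+=B_+$ and $\Omega^-=B_-$.

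In case (i), since $u^+\in H^1_0(\Omega^+)$ and $u^-\in H^1_0(\Omega^-)$ with $\Omega^+,\Omega^-\subset B_+$ disjoint, the restriction $u|_{B_+}$ is a function in $H^1_0(B_+)$ that changes sign, hence by the variational characterization of $\lambda_2$ (the min-max over two-dimensional subspaces, or equivalently \eqref{maxg}) one has $\mathcal R(u)\ge \lambda_2(B_+)$; on the other hand $u$ is admissible for $\lambda_1^{\chi_\alpha}(B_+\cup B_-)$ so $\mathcal R(u)=\lambda_1^{\chi_\alpha}(B_+\cup B_-)$, giving $\lambda_1^{\chi_\alpha}(B_+\cup B_-)\ge\lambda_2(B_+)$. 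But in fact equality must hold here: a two-sign eigenfunction of $\lambda_2(B_+)$ supported inside $B_+$ is itself orthogonal to $\chi_\alpha$ (it integrates to zero against $\alpha$ on $B_+$ and against $1$ on the empty intersection with $B_-$), so it is admissible for the twisted problem and yields $\lambda_1^{\chi_\alpha}(B_+\cup B_-)\le\lambda_2(B_+)$. Therefore in case (i), $\lambda_1^{\chi_\alpha}(B_+\cup B_-)=\lambda_2(B_+)$, and by the scaling relation $\lambda_2(B_+)=|B|^{2/d}\lambda_2(B)/|B_+|^{2/d}$ we get $|B_+|^{2/d}\lambda_1^{\chi_\alpha}(B_+\cup B_-)=|B|^{2/d}\lambda_2(B)$, the equality case.

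In case (ii), where $\Omega^+=B_+$ and $\Omega^-=B_-$, I would argue the \emph{strict} inequality. Here $u^+$ is the first Dirichlet eigenfunction of $B_+$ (being positive in $B_+$ and vanishing on $\partial B_+$, with $u$ analytic), so $\mathcal R(u^+)=\lambda_1(B_+)$, and similarly $\mathcal R(u^-)=\lambda_1(B_-)$. Since $|B_+|\ge|B_-|$, Faber–Krahn gives $\lambda_1(B_+)\le\lambda_1(B_-)$, i.e. $\mathcal R(u^+)\le\mathcal R(u^-)$; the bounds \eqref{minmax}-type estimate from Lemma~\ref{l.algebraic} (or directly Proposition~\ref{p.rayleigh} combined with Step~1 of its proof) then yield $\lambda_1^{\chi_\alpha}(B_+\cup B_-)\le\mathcal R(u^+)=\lambda_1(B_+)<\lambda_2(B_+)$, and again rescaling $\lambda_2(B_+)=|B|^{2/d}\lambda_2(B)/|B_+|^{2/d}$ gives the strict inequality $|B_+|^{2/d}\lambda_1^{\chi_\alpha}(B_+\cup B_-)<|B|^{2/d}\lambda_2(B)$. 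Combining cases (i) and (ii), the inequality holds always, with equality precisely when $\Omega^+\cup\Omega^-\subset B_+$.

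The main obstacle I anticipate is the clean dichotomy of the first step — ruling out configurations where, say, one nodal domain sits entirely in $B_-$ while the other straddles only $B_+$, or where a nodal domain meets $B_-$ without filling it. This is where analyticity (Lemma~\ref{l.reg}(a)) and the exact count of two nodal domains (Proposition~\ref{p.CH2}) must be used together carefully: on the connected component $B_-$, the set where $u$ is positive, where it is negative, and where it vanishes partition $B_-$; if $u$ vanishes on a nonempty open subset of $B_-$ it would vanish identically there, contradicting analyticity unless $u\equiv 0$ on $B_-$, so $B_-$ is contained in a single nodal domain; the same for $B_+$; since there are exactly two nodal domains in total this forces either both components inside one nodal domain (impossible as the two components are disjoint and a nodal domain is connected — so actually this sub-case collapses and instead both nodal domains lie in $B_+$ with $B_-$ unused, i.e. $u\equiv0$ on $B_-$) or one nodal domain per component. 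I would need to phrase this partition argument precisely, but it is elementary once analyticity is invoked.
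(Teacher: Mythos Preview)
There is a genuine gap in your case (ii). The claim that $u^+$ is the first Dirichlet eigenfunction of $B_+$ is false: positivity in $B_+$ and vanishing on $\partial B_+$ do not make $u^+$ a Dirichlet eigenfunction. From \eqref{eq.EL} the restriction $u^+$ solves $-\Delta u^+=\lambda u^+ +\alpha\xi$ on $B_+$ with $\xi$ the Lagrange multiplier, and by Propositions~\ref{prop.xi} and~\ref{p.rayleigh} one has $\xi\neq0$ whenever $|B_+|\neq|B_-|$; so $u^+$ is not a multiple of the first Dirichlet eigenfunction and $\mathcal R(u^+)>\lambda_1(B_+)$ in that situation. Even setting this aside, the inequality you invoke, $\lambda_1^{\chi_\alpha}(B_+\cup B_-)\le\mathcal R(u^+)$, points the wrong way: Lemma~\ref{l.algebraic} and \eqref{minmax} give $\mathcal R(u^+)\le\lambda_1^{\chi_\alpha}\le\mathcal R(u^-)$ when $\mathcal R(u^+)\le\mathcal R(u^-)$, while Step~1 of the proof of Proposition~\ref{p.rayleigh} only gives $\lambda_1^{\chi_\alpha}\le c\,\mathcal R(u^+)$ for some constant $c\ge1$. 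So your strict-inequality argument collapses, and with it the ``only if'' direction of the equality characterisation.

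The paper handles this direction differently. The upper bound $\lambda_1^{\chi_\alpha}(B_+\cup B_-)\le\lambda_2(B_+)$ comes, as you also note, simply from testing \eqref{lambda} with a second Dirichlet eigenfunction of $B_+$. For the equality case, the paper uses Proposition~\ref{p.rayleigh} to conclude $|\Omega^+|=|\Omega^-|$; then, if the two nodal domains were not contained in the same ball, analyticity (Lemma~\ref{l.reg}(a)) would force $\Omega^+=B_+$ and $\Omega^-=B_-$, hence $|B_+|=|B_-|$, and \eqref{eq.ind} combined with the strict Hong--Krahn--Szeg\H{o} inequality \eqref{HKS} produces a contradiction. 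This replaces your broken case (ii).

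A smaller point on case (i): ``sign-changing'' alone does not give $\mathcal R(u)\ge\lambda_2(B_+)$, and the min-max over two-dimensional subspaces only bounds $\max\{\mathcal R(u^+),\mathcal R(u^-)\}$ from below, not $\mathcal R(u)$ itself. What you actually have is that $u$ has mean zero on $B_+$ (from $\int u\chi_\alpha=0$ together with $u\equiv0$ on $B_-$), so $\mathcal R(u)\ge\lambda_1^T(B_+)=\lambda_2(B_+)$ by~\eqref{uguali}.
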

\begin{proof}
By testing \eqref{lambda} with the Dirichlet eigenfunction $u_2$ corresponding to $\lambda_2(B_+)$, which clearly belongs to $H^1_{0,\chi_\alpha}(B_+)$, it follows that $\lambda_{1}^{\chi_\alpha}(B_{+} \cup B_{-})\le \lambda_2(B_+)$. The scaling property \eqref{scaling} for Dirichlet eigenvalues provides the bound.
\end{proof}

\begin{oss}
In the case of equality, the twisted eigenvalue is not simple: there exist at least $d$ orthogonal twisted eigenfunctions, which are the ones corresponding to the second Dirichlet eigenvalue on a ball $B$, see \cite[Example 3]{bb}. 
\end{oss}

By testing with a suitable linear combination of the first Dirichlet eigenfunctions on two disjoint balls we also have an upper bound for twisted eigenvalues with orthogonality constraint of bang-bang type.

\begin{prop}\label{p.upper}
If $\Omega=B_+\cup B_-$ where $B_+$ and $B_-$ are two disjoint balls with $|B_+|\ge|B_-|$, and $\chi_\alpha=\alpha\chi_{B_+}+\chi_{\mathbb R^d\setminus B_+}$, then 
\begin{equation*}
\frac{|B_+|^{-1}+\alpha^2|B_-|^{-1}}{|B_+|^{-1-\frac2d}+\alpha^2 |B_-|^{-1-\frac2d}}\lambda_{1}^{\chi_\alpha}(B_{+} \cup B_{-})\le  |B|^\frac{2}{d}\lambda_1(B),  
\end{equation*}
where $B$ is any ball in $\mathbb R^d$.
\end{prop}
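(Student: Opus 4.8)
The plan is to test the variational definition \eqref{lambda} of $\lambda_1^{\chi_\alpha}(B_+\cup B_-)$ with an explicit competitor built from the first Dirichlet eigenfunctions of the two balls. Let $R_+$ and $R_-$ denote the radii of $B_+$ and $B_-$, and let $u_1^+,u_1^-$ be the first Dirichlet eigenfunctions of $B_+,B_-$, each normalized in $L^2(\mathbb R^d)$ and extended by zero outside the corresponding ball. Then $u_1^\pm>0$, $u_1^\pm\in H^1_0(B_+\cup B_-)$, they are linearly independent (disjoint supports), and $\int_{B_+\cup B_-}u_1^\pm\chi_\alpha\,dx>0$, so $u_1^\pm\notin H^1_{0,\chi_\alpha}(B_+\cup B_-)$. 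Applying Lemma~\ref{l.convex} with $v_1=u_1^+$ and $v_2=u_1^-$ produces the admissible function
\[
v:=u_1^+-\gamma u_1^-\in H^1_{0,\chi_\alpha}(B_+\cup B_-)\setminus\{0\},\qquad \gamma=\frac{\int_{B_+\cup B_-}u_1^+\chi_\alpha\,dx}{\int_{B_+\cup B_-}u_1^-\chi_\alpha\,dx}=\frac{\alpha\|u_1^+\|_1}{\|u_1^-\|_1}.
\]

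Next I would evaluate the Rayleigh quotient of $v$. Since the supports of $u_1^+$ and $u_1^-$ are disjoint, $\|u_1^\pm\|_2^2=1$, and $\int|\nabla u_1^\pm|^2\,dx=\lambda_1(B_\pm)$, one gets
\[
\lambda_1^{\chi_\alpha}(B_+\cup B_-)\le \mathcal R(v)=\frac{\lambda_1(B_+)+\gamma^2\lambda_1(B_-)}{1+\gamma^2}.
\]
To simplify this I would use the scaling behaviour on balls: writing the $L^2$-normalized first eigenfunction on a ball of radius $R$ as the corresponding rescaling of the one on the unit ball $B_1$ gives $\lambda_1(B_\pm)=R_\pm^{-2}\lambda_1(B_1)$ and $\|u_1^\pm\|_1=c\,R_\pm^{d/2}$ for a dimensional constant $c$ independent of the radius; hence $\gamma^2=\alpha^2(R_+/R_-)^d$. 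Substituting and multiplying numerator and denominator by $R_+^{-d}$ yields
\[
\mathcal R(v)=\lambda_1(B_1)\,\frac{R_+^{-d-2}+\alpha^2R_-^{-d-2}}{R_+^{-d}+\alpha^2R_-^{-d}}.
\]

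Finally I would translate radii into volumes through $R_\pm^{-d}=\delta_d|B_\pm|^{-1}$ and $R_\pm^{-d-2}=\delta_d^{1+2/d}|B_\pm|^{-1-2/d}$, factor out the common power of $\delta_d$, and invoke the scale invariance $\lambda_1(B_1)\delta_d^{2/d}=\lambda_1(B_1)|B_1|^{2/d}=|B|^{2/d}\lambda_1(B)$, valid for every ball $B$. This gives
\[
\lambda_1^{\chi_\alpha}(B_+\cup B_-)\le |B|^{2/d}\lambda_1(B)\,\frac{|B_+|^{-1-2/d}+\alpha^2|B_-|^{-1-2/d}}{|B_+|^{-1}+\alpha^2|B_-|^{-1}},
\]
which is the asserted inequality once the fraction is moved to the left-hand side (the hypothesis $|B_+|\ge|B_-|$ is in fact not needed for this bound). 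There is no serious obstacle here: the argument is a single well-chosen test function plus scaling. The only point requiring care is the bookkeeping of the exponents -- keeping the factor $R^{-d/2}$ from the $L^2$-normalization (which fixes $\gamma$) separate from the factors $R^{-2}$ and $R^{d}$ coming respectively from the Dirichlet energy and the volume element -- together with the observation that the Bessel-type constant $c$ cancels entirely, so no explicit special-function computation enters.
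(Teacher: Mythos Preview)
Your proof is correct and follows essentially the same approach as the paper: both test \eqref{lambda} with the linear combination of the first Dirichlet eigenfunctions of $B_+$ and $B_-$ that lies in $H^1_{0,\chi_\alpha}(B_+\cup B_-)$, and then reduce everything via scaling to the quantity $|B|^{2/d}\lambda_1(B)$. The only cosmetic difference is that the paper rescales a single eigenfunction $u_1$ of a reference ball $B$ into each of $B_\pm$ and uses the test function $|B_-|u_+-\alpha|B_+|u_-$, whereas you take the $L^2$-normalized eigenfunctions of $B_\pm$ directly and invoke Lemma~\ref{l.convex}; the resulting Rayleigh quotient is the same.
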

\begin{proof}
Let $c$, $c_+$ and $c_-$ be the centers of the balls $B$, $B_+$ and $B_-$. Consider the first Dirichlet eigenfunction $u_1$ of $B$ for some ball $B$ in $\mathbb R^d$ (translate it) and scale it by the factor $a_\pm:=(|B_\pm|/|B|)^{1/d}$ to belong to  $H^1_0(B_\pm)$, namely define $u_\pm(x):=u_1((x-c_\pm +c)/a_\pm)$ for every $x \in B_\pm$. Now, the function $v:=|B_-|u_+-\alpha |B_+|u_-$ satisfies 
\[
\begin{split}
\int_{B_+\cup B_-} v\chi_\alpha dx&=\alpha|B_-|\int_{B_+} u_+dx-\alpha|B_+|\int_{B_-} u_-dx\\&=\alpha|B_-|a_+^d\int_B u_1dx-\alpha|B_+|a_-^d\int_B u_1dx=0
\end{split}
\]
and thus, since $v\in H^1_{0,\chi_\alpha}(B_+\cup B_-)$, one may test \eqref{lambda} with $v$ to obtain
\[
\begin{split}
  \lambda_{1}^{\chi_\alpha}(B_{+} \cup B_{-})&\le   
\frac{|B_-|^2\int_{B_{+}} |\nabla u_+|^2dx  +\alpha^2 |B_+|^2\int_{B_{-}} |\nabla u_-|^2dx }{|B_-|^2\int_{B_{+}} u_+^2 dx+\alpha^2|B_+|^2 \int_{B_{-}}u_-^2dx}\\
&=\frac{|B_-|^2a_+^{d-2}\int_{B} |\nabla u_{1}|^2dx+\alpha^2|B_+|^2a_-^{d-2}\int_{B} |\nabla u_{1}|^2dx}{|B_-|^2a_+^d\int_{B} u_{1}^2dx+\alpha^2|B_+|^2a_-^d\int_{B} u_{1}^2dx}\\
&=\bigg(\frac{|B_-|^2a_+^{d-2}+\alpha^2|B_+|^2a_-^{d-2}}{|B_-|^2a_+^d+\alpha^2|B_+|^2a_-^d}\bigg)\frac{\int_{B} |\nabla u_{1}|^2dx}{\int_{B} u_{1}^2dx},
\end{split}
\]
where the first equality holds by (a translation and) a change of variable with $a_\pm$. Replacing the expressions for $a_\pm$ yields the desired bound.
\end{proof}

\begin{cor}\label{cor.upper}
If $\alpha<1$ there exist two disjoint balls $B_+, B_-$, $|B_+|> |B_-|$ and     \begin{equation*}
      |B_{+} \cup B_{-}|^\frac{2}{d}\lambda_{1}^{\chi_\alpha}(B_{+} \cup B_{-}) < |B^\wedge \cup B^\vee|^\frac{2}{d}\lambda_2(B^\wedge \cup B^\vee),
    \end{equation*}
where $B^\wedge, B^\vee$ are any disjoint balls of equal measure. 
\end{cor}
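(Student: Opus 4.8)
The plan is to realize the inequality with $\Omega=B_+\cup B_-$ a union of two disjoint balls and to optimize over their measure ratio, starting from the upper bound of Proposition~\ref{p.upper}. Since the functional $\Omega\mapsto|\Omega|^{2/d}\lambda_1^g(\Omega)$ is scale invariant, I may normalize $|B_+|=1$ and set $|B_-|=m$ with $m\in(0,1]$; multiplying the estimate of Proposition~\ref{p.upper} by $|B_+\cup B_-|^{2/d}=(1+m)^{2/d}$ gives
\[
|B_+\cup B_-|^{2/d}\lambda_1^{\chi_\alpha}(B_+\cup B_-)\le F(m)\,|B|^{2/d}\lambda_1(B),\qquad
F(m):=(1+m)^{2/d}\,\frac{1+\alpha^2 m^{-1-2/d}}{1+\alpha^2 m^{-1}}.
\]
On the right-hand side of the desired inequality, for two disjoint balls $B^\wedge,B^\vee$ of equal measure the spectrum of the union is the union of the spectra, so $\lambda_2(B^\wedge\cup B^\vee)=\lambda_1(B^\wedge)$, and by scale invariance of the Faber--Krahn constant $|B^\wedge\cup B^\vee|^{2/d}\lambda_2(B^\wedge\cup B^\vee)=2^{2/d}|B|^{2/d}\lambda_1(B)$. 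Hence it is enough to produce some $m\in(0,1)$ with $F(m)<2^{2/d}$.

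To this end I would first observe that $F(1)=2^{2/d}$ (so $m=1$ reproduces exactly the Hong--Krahn--Szeg\H{o} value, in agreement with \eqref{eq.ind}) and then compute the logarithmic derivative of $F$ at $m=1$. Writing $\log F(m)=\tfrac2d\log(1+m)+\log(1+\alpha^2 m^{-1-2/d})-\log(1+\alpha^2 m^{-1})$ and differentiating yields, after evaluating at $m=1$,
\[
\frac{F'(1)}{F(1)}=\frac1d\Bigl(1-\frac{2\alpha^2}{1+\alpha^2}\Bigr)=\frac1d\cdot\frac{1-\alpha^2}{1+\alpha^2}>0,
\]
the strict positivity coming precisely from the hypothesis $\alpha<1$. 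Since $F$ is smooth near $m=1$ with $F'(1)>0$, there is $m\in(0,1)$ with $F(m)<F(1)=2^{2/d}$; taking two disjoint balls with $|B_-|/|B_+|=m$ (so that $|B_+|>|B_-|$) then gives $|B_+\cup B_-|^{2/d}\lambda_1^{\chi_\alpha}(B_+\cup B_-)\le F(m)|B|^{2/d}\lambda_1(B)<2^{2/d}|B|^{2/d}\lambda_1(B)=|B^\wedge\cup B^\vee|^{2/d}\lambda_2(B^\wedge\cup B^\vee)$, as claimed.

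The computation is entirely elementary; the only points needing care are the bookkeeping — keeping the weight $\alpha$ attached to the larger ball $B_+$ as in Proposition~\ref{p.upper}, so that $m\le1$ is the relevant regime — and the identification $|B^\wedge\cup B^\vee|^{2/d}\lambda_2(B^\wedge\cup B^\vee)=2^{2/d}|B|^{2/d}\lambda_1(B)$. I do not expect a genuine obstacle: the sign of $F'(1)$, controlled by $(1-\alpha^2)/(1+\alpha^2)$, is exactly the mechanism by which a strictly unequal pair of balls outperforms the equal pair once $\alpha<1$, while at $\alpha=1$ the derivative vanishes and $m=1$ becomes critical.
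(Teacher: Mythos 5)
Your proof is correct and follows essentially the same route as the paper: both start from Proposition~\ref{p.upper}, express the bound as a function of the measure ratio $m=|B_-|/|B_+|$, observe that its value at $m=1$ equals $|B^\wedge\cup B^\vee|^{2/d}\lambda_2(B^\wedge\cup B^\vee)$, and show the one-sided derivative at $m=1$ is strictly positive when $\alpha<1$. The only difference is cosmetic (you compute $F'(1)/F(1)$ via the logarithmic derivative rather than $f'$ directly), and the resulting derivative value $2^{2/d}(1-\alpha^2)/(d(1+\alpha^2))$ matches the paper's $4^{1/d}(1-\alpha^2)/(d(1+\alpha^2))$.
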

\begin{proof}
By denoting $m=|B_-|/|B_+|$, the bound in Proposition~\ref{p.upper} reads as follows 
\[
 |B_+\cup B_-|^\frac{2}{d}\lambda_{1}^{\chi_\alpha}(B_{+} \cup B_{-})\le  \frac{m^{1+\frac2d}+\alpha^2 }{m+\alpha^2}\bigg(1+\frac{1}{m}\bigg)^\frac{2}{d}|B|^\frac{2}{d}\lambda_1(B)=:f(m).  
 \]
The function $f$ just defined is continuously differentiable in $(0,1)$ with 
 $$f(1)= |B^\wedge \cup B^\vee|^\frac{2}{d}\lambda_1(B^\wedge \cup B^\vee) \quad \text{and} \quad \lim_{m \rightarrow 1^-}f'(m)=\frac{4^{\frac1d}(1-\alpha^2)}{d(1+\alpha^2)}|B|^\frac{2}{d}\lambda_1(B)>0.$$
Thus, there exists $m<1$, sufficiently close to $1$, such that $f(m)<f(1)$.
\end{proof}

\begin{figure}[t]
\includegraphics[width=4.1cm]{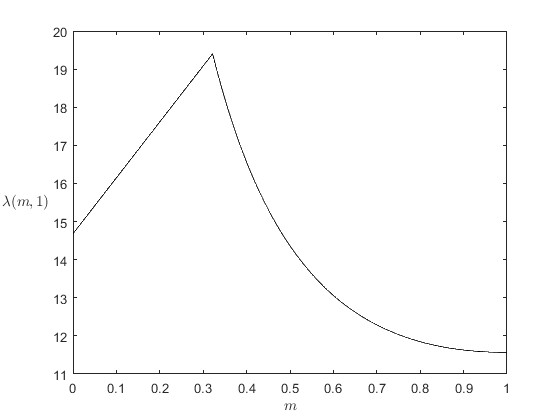}
\includegraphics[width=4.1cm]{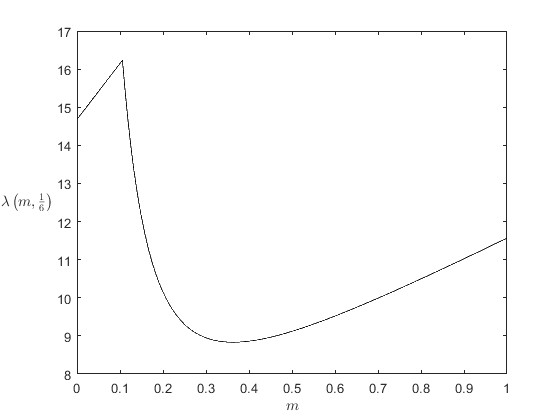}
\includegraphics[width=4.1cm]{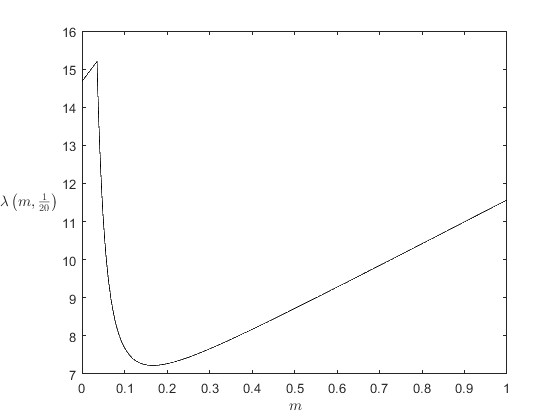}
        \caption{Plots (for $d=2$) of $\lambda(m,\alpha)= |B_+\cup B_-|^{\frac{2}{d}}\lambda_1^{\chi_\alpha}(B_+\cup B_-)$ in terms of the variable $m=|B_-|/|B_+|$, at fixed $\alpha=1,1/6,1/20$ (continuous lines) with their upper bounds in Proposition~\ref{p.upper} (dashed lines).}\label{fig.mlambda}
\end{figure}

Lemma~\ref{p.ex1} with Corollary~\ref{cor.upper} provide bounds on the value $\lambda_1^{\chi_\alpha}(B_+\cup B_-)$ in terms of the ratio $m=|B_-|/|B_+|$; see Figure~\ref{fig.mlambda} for some of these curves.
The following theorem is crucial for proving the main results of the paper: this enables the reduction of the minimization problem to a family of pairs of disjoint balls and bang-bang functions.

\begin{thm}\label{t.rearrangement}
Let $\Omega$ be such that one of the
following conditions holds:
\begin{itemize}
    \item[(i)] $\alpha \in (0,1)$ and $|\Omega|^\frac{2}{d}\lambda_1^g(\Omega)< |B^\wedge \cup B^\vee|^\frac{2}{d}\lambda_2(B^\wedge \cup B^\vee)$;
    \item[(ii)] $\alpha=1$ and $|\Omega|^\frac{2}{d}\lambda_1^g(\Omega)< |B^\wedge|^\frac{2}{d}\lambda_2(B^\wedge)$;
\end{itemize}
where $B^\wedge,B^\vee$ denote disjoint balls of equal measure and $g\in L^\infty_\alpha$. Then there exist $B_{+}$, $B_{-}$ disjoint balls with $|B_{+}| \ge |B_{-}|$ such that
    \begin{equation}\label{inequality.rearrangement}
        |\Omega|^\frac2d\lambda_{1}^{g}(\oo) \ge |B_+\cup B_-|^\frac2d \lambda_{1}^{\chi_\alpha}(B_{+} \cup B_{-}) ,
    \end{equation}
    where $\chi_\alpha = \alpha\chi_{B_{+}} + \chi_{\mathbb R^d\setminus B_+}$ and $B_+,B_-$ are the nodal domains of the twisted eigenfunction corresponding to $\lambda_1^{\chi_\alpha}(B_{+} \cup B_{-})$.
    Equality holds in \eqref{inequality.rearrangement}  if and only if $\oo=B_+ \cup B_-$ and $g=\chi_\alpha=\alpha \chi_{B_+}+\chi_{\mathbb R^d\setminus B_+}$ a.e. in $\Omega$.
\end{thm}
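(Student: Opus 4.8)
The idea is to symmetrize in two stages: first to reduce a general $\Omega$ to a disjoint union of two balls via Faber--Krahn applied on nodal domains, and then to reduce a general bang-bang type constraint to the canonical $\chi_\alpha$ via Proposition~\ref{p.monotonicity2}; finally one checks that the nodal domains of the resulting twisted eigenfunction on $B_+\cup B_-$ are again the two balls themselves, which closes the argument and pins down the equality case.

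\smallskip
First I would take a twisted eigenfunction $u$ corresponding to $\lambda_1^g(\Omega)$. Since $g\in L^\infty_\alpha\subset L^\infty_+$, Remark~\ref{r.atleast} and Proposition~\ref{p.CH} give that (choosing $u$ appropriately) $u$ has exactly two nodal domains $\Omega^+,\Omega^-$ with $u=u^+-u^-$ and $u^\pm>0$; we may label them so that $\mathcal R(u^+)\le\mathcal R(u^-)$. By Proposition~\ref{p.monotonicity2} (second part) we get $\lambda_1^g(\Omega)\ge\lambda_1^{\chi}(\Omega)$ with $\chi=\alpha\chi_{\Omega^+}+\chi_{\Omega^-}$, with equality only if $g=\chi$ a.e.\ in $\Omega$ or $\mathcal R(u^+)=\mathcal R(u^-)$. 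Now I replace $\Omega^+$ and $\Omega^-$ separately by balls $\widetilde B_1,\widetilde B_2$ of the same respective measures: by the Faber--Krahn inequality \eqref{FK} one has $|\Omega^\pm|^{2/d}\lambda_1(\Omega^\pm)\ge|\widetilde B|^{2/d}\lambda_1(\widetilde B)$, hence testing the Rayleigh quotient on $\widetilde B_1\cup\widetilde B_2$ with the (scaled) first Dirichlet eigenfunctions of the two balls — arranged to satisfy the orthogonality constraint exactly as in Lemma~\ref{l.convex}, using the invariance \eqref{invariance2} to match the weights — gives, after the algebraic step of Lemma~\ref{l.algebraic} applied to the quotient $Q(\gamma^2)$ of the two Rayleigh quotients, the bound $|\Omega|^{2/d}\lambda_1^g(\Omega)\ge|\widetilde B_1\cup\widetilde B_2|^{2/d}\lambda_1^{\chi_\alpha}(\widetilde B_1\cup\widetilde B_2)$, where $\chi_\alpha=\alpha\chi_{\widetilde B_1}+\chi_{\widetilde B_2}$ and $|\widetilde B_1|\ge|\widetilde B_2|$ after a relabelling (here I must keep track of which ball carries the weight $\alpha$ and which carries $1$: the monotonicity in Lemma~\ref{l.algebraic} forces the larger ball to carry $\alpha$, consistently with Proposition~\ref{p.rayleigh}). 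Equality in Faber--Krahn forces $\Omega^\pm$ to be balls, and equality in the $Q$-step forces $\gamma$ to match exactly, i.e.\ $\Omega=\widetilde B_1\cup\widetilde B_2$ with the correct weights.

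\smallskip
The remaining point — and the one that needs the hypothesis of the theorem — is that $B_+,B_-$ in the statement are required to be the nodal domains of the twisted eigenfunction $u_\alpha$ \emph{on} $B_+\cup B_-$, not just two balls of some fixed ratio. After the first reduction I have a disjoint union $\widetilde B_1\cup\widetilde B_2$ with $\chi_\alpha=\alpha\chi_{\widetilde B_1}+\chi_{\widetilde B_2}$; let $u_\alpha$ be a twisted eigenfunction there, with nodal domains $(\widetilde B_1\cup\widetilde B_2)^\pm$. By Lemma~\ref{p.ex1}, unless $\lambda_1^{\chi_\alpha}(\widetilde B_1\cup\widetilde B_2)=\lambda_2(\widetilde B_1)$ with both nodal domains inside $\widetilde B_1$, the nodal domains are $\widetilde B_1$ and $\widetilde B_2$ themselves; the excluded borderline case is precisely ruled out by the strict inequality assumed in the hypothesis, since that equality would force (by Lemma~\ref{p.ex1} combined with the scaling) $|\Omega|^{2/d}\lambda_1^g(\Omega)\ge|\widetilde B_1\cup\widetilde B_2|^{2/d}\lambda_2(\widetilde B_1)\ge|B^\wedge\cup B^\vee|^{2/d}\lambda_2(B^\wedge\cup B^\vee)$ via Hong--Krahn--Szeg\H{o} \eqref{HKS} (and in the $\alpha=1$ case the alternative bound with $\lambda_2(B^\wedge)$), contradicting the hypothesis; so I set $B_\pm=(\widetilde B_1\cup\widetilde B_2)^\pm$, which are exactly $\widetilde B_1$ and $\widetilde B_2$. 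Finally the equality case is assembled by tracing back: equality in \eqref{inequality.rearrangement} forces equality in each step — Faber--Krahn gives $\Omega^\pm$ balls, the $Q$-step and Proposition~\ref{p.monotonicity2} give $g=\chi_\alpha$ a.e., and the labelling/ratio is determined — yielding $\Omega=B_+\cup B_-$ with $g=\chi_\alpha$.

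\smallskip
The main obstacle I anticipate is bookkeeping rather than conceptual: one has to carry the two weights ($\alpha$ on one ball, $1$ on the other) consistently through Faber--Krahn, through the change of variables that rescales the Dirichlet eigenfunctions to the right balls, and through Lemma~\ref{l.algebraic}, making sure at each stage that the larger ball is the one weighted by $\alpha$ (so that $Q$ is monotone in the right direction and the eventual nodal-domain identification via Proposition~\ref{p.rayleigh} and Lemma~\ref{p.ex1} goes through). The second delicate point is the use of the quantitative hypothesis to exclude the degenerate configuration in Lemma~\ref{p.ex1}; this is where the two alternative thresholds ($\lambda_2(B^\wedge\cup B^\vee)$ in general, $\lambda_2(B^\wedge)$ when $\alpha=1$) enter, and one must check that the chain of inequalities genuinely produces a contradiction in each case.
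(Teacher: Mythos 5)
Your overall plan (symmetrize in two stages, identify the nodal domains of the eigenfunction on the limit configuration, trace back for rigidity) is the right skeleton, and your use of Lemma~\ref{p.ex1} together with the quantitative hypothesis to rule out the degenerate case is essentially the same move as in the paper. But the central symmetrization step is carried out with the wrong tool, and this leaves a genuine gap.

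You invoke the Faber--Krahn inequality on the nodal domains and then propose to \emph{replace} $u^\pm$ by the (scaled) first Dirichlet eigenfunctions of the corresponding balls. The trouble is that the nodal restrictions $u^\pm$ of a twisted eigenfunction are \emph{not} first Dirichlet eigenfunctions of $\Omega^\pm$: they solve $-\Delta u^\pm = \lambda_1^g(\Omega)u^\pm + \xi g$ with $\xi\neq 0$ in general, so $\mathcal R(u^\pm) > \lambda_1(\Omega^\pm)$ strictly. Once you swap $u^\pm$ for the Dirichlet ground states of the balls, the $L^2$-weights in the resulting Rayleigh quotient $Q(\gamma^2)$ are governed by the new orthogonality parameter $\gamma$, not by $\|u^+\|_2^2,\|u^-\|_2^2$; Lemma~\ref{l.algebraic} only tells you that $Q(\gamma^2)$ lies between $\lambda_1(\widetilde B_1)$ and $\lambda_1(\widetilde B_2)$, which is \emph{not} enough to conclude $Q(\gamma^2)\le\lambda_1^g(\Omega)$. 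One can cook up numerical configurations (e.g.\ $\mathcal R(u^+)=2$, $\mathcal R(u^-)=10$, $\|u^+\|_2^2\gg\|u^-\|_2^2$ so that $\lambda_1^g(\Omega)\approx 2$, while $\gamma$ forces $Q(\gamma^2)$ close to $\lambda_1(\widetilde B_2)\approx 9$) in which the test-function bound you would obtain points in the wrong direction. The paper avoids this by applying the P\'olya--Szeg\H{o} inequality directly to $u^+$ and $u^-$: the Schwarz rearrangement preserves all $L^p$-norms of each part (hence both the $L^2$-weights and the $L^1$-constraint that encodes orthogonality to the bang-bang function) while only lowering the Dirichlet energies, so $\mathcal R(u^*)\le\mathcal R(u)$ with $u^*$ admissible. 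Faber--Krahn is merely a corollary of P\'olya--Szeg\H{o} for the ground state, and here one needs the full P\'olya--Szeg\H{o} statement applied to the actual nodal restrictions.

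A secondary issue is the order in which you reduce the constraint. You jump straight from $g$ to $\chi_\alpha=\alpha\chi_{\Omega^+}+\chi_{\Omega^-}$ via the second part of Proposition~\ref{p.monotonicity2}, then rearrange. But after rearranging one is sitting on a union of two balls, and it is precisely there that Proposition~\ref{p.rayleigh} (which needs the domain to be a union of balls) becomes available to establish the relationship between $|\Omega^+|\gtrless|\Omega^-|$ and $\mathcal R(u^+)\lessgtr\mathcal R(u^-)$ and hence to justify passing from a general bang-bang $\chi=\alpha_1\chi_{\cdot}+\alpha_2\chi_{\cdot}$ to $\chi_\alpha$ with the coefficients placed correctly. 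The paper's order --- first part of Proposition~\ref{p.monotonicity2} (to a general bang-bang $\chi$), then rearrangement, then Proposition~\ref{p.rayleigh} plus the second part of Proposition~\ref{p.monotonicity2} --- is not cosmetic; it makes the constraint function constant on the nodal domains being rearranged, which is what makes the orthogonality survive the rearrangement. Finally, the rigidity case is considerably more involved than your one-line ``trace back'': the paper needs the Brothers--Ziemer rigidity theorem for P\'olya--Szeg\H{o} (together with analyticity of the rearranged eigenfunction) and a separate simplicity statement (Proposition~\ref{p.shapederivative}) to conclude $g=\chi_\alpha$ a.e.\ and $\Omega=B_+\cup B_-$.
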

\begin{proof}
By Remark~\ref{r.atleast} there exists a twisted eigenfunction corresponding to $\lambda_1^g(\Omega)$ that changes sign in $\Omega$. By the first statement in Proposition~\ref{p.monotonicity2} there exist $\alpha_1,\alpha_2 \in [\alpha,1]$ such that 
\begin{equation}\label{dis.function}
  \lambda_1^g(\Omega)\ge \lambda_1^{\chi}(\Omega),
\end{equation}
where $\chi=\alpha_1\chi_{\Omega^+}+\alpha_2\chi_{\Omega^-}\in L^\infty_\alpha$.
By using Remark~\ref{r.atleast} again, there exists a twisted eigenfunction $u=u^+-u^-$ corresponding to $\lambda_1^{\chi}(\Omega)$ that changes sign in $\Omega$, which, without loss of generality, satisfies $|\Omega^+|\ge |\Omega^-|$. 
Then consider the \emph{(spherically) symmetric decreasing rearrangements}  (the so-called \emph{Schwarz symmetrizations}) $u_+^*$ and $u_-^*$ of $u^{+}$ and $u^{-}$, respectively. From \cite[Theorems~2.1.2 and 2.1.3]{h}, it is well-known that this rearrangement preserves the $L^p$-norms of a function, while decreasing the $L^p$-norm of its gradient (the so-called \emph{P\'olya-Szeg\H{o} inequality}). Therefore,
\begin{equation}\label{dis.rearrangement}
\begin{split}
    \lambda_{1}^{\chi}(\Omega)
    &=\dfrac{\int_{\oo^+} |\nabla u^{+}(x)|^2  dx + \int_{\oo^-} |\nabla u^{-}(x)|^2  dx}{\int_{\oo^+} u^{+}(x)^2  dx+  \int_{\oo^-} u^{-}(x)^2  dx} \\
    &\ge \dfrac{\int_{\Omega_+^*} |\nabla u_+^{*}(x)|^2  dx+ \int_{\Omega_-^*} |\nabla u_{-}^*(x)|^2  dx}{\int_{\Omega_+^*} u_{+}^*(x)^2  dx+  \int_{\Omega_-^*} u_{-}^*(x)^2  dx},
\end{split}
\end{equation}
where $\Omega_+^*$ and $\Omega_-^*$ are two disjoint balls such that $|\Omega_+^*|=|\Omega^+|$, $|\Omega_-^*|=|\Omega^-|$, $|\Omega_+^*|\ge |\Omega_-^*|$, and eventually $|\Omega_+^*\cup \Omega_-^*|\le |\Omega|$.  
The function $u^*:= u^*_+- u^*_- \in H^1_0(\Omega_+^* \cup \Omega_-^*)$ satisfies
\[
\begin{split}
\int_{\Omega_+^*\cup\Omega_-^*}\!\!\! u^*\chi dx=\alpha_1\!\int_{\Omega_+^*} \!\!u^*_+ dx-\alpha_2\!\int_{\Omega_-^*} \!\!u^*_- dx
&=\alpha_1\!\int_{\Omega^+} \!\!u^+ dx-\alpha_2\!\int_{\Omega^-} \!\! u^- dx\\&=\!\int_{\Omega^+\cup\Omega^-} \!\!\!u\chi dx=\!0,
\end{split}
\]
hence $u^*\in H^1_{0,\chi}(\Omega_+^*\cup \Omega_-^*)$ is admissible in \eqref{lambda} and
\begin{equation}\label{dis.eigenfunction}
    \dfrac{\int_{\Omega_+^*} |\nabla u_+^{*}(x)|^2  dx+ \int_{\Omega_-^*} |\nabla u_{-}^*(x)|^2  dx}{\int_{\Omega_+^*} u_{+}^*(x)^2  dx+  \int_{\Omega_-^*} u_{-}^*(x)^2  dx}
    \ge \lambda_{1}^{\chi}(\Omega_+^*\cup \Omega_-^*) .
\end{equation}
By 
Proposition~\ref{p.rayleigh} and the second statement in  Proposition~\ref{p.monotonicity2} we obtain
\begin{equation}\label{prima}
    \lambda_{1}^{\chi}(\Omega_+^*\cup \Omega_-^*) \ge \lambda_{1}^{\chi_\alpha}(\Omega_+^*\cup \Omega_-^*),
\end{equation}
 where $\chi_\alpha = \alpha\chi_{\Omega_+^*} + \chi_{\Omega_-^*}$. 
Overall, we obtain
\begin{equation}\label{dis.measure}
    |\Omega|^{\frac{2}{d}}\lambda_1^g(\Omega)\ge |\Omega|^{\frac{2}{d}}\lambda_1^{\chi_\alpha}(\Omega_+^*\cup \Omega_-^*)\ge|\Omega_+^*\cup \Omega_-^*|^{\frac{2}{d}}\lambda_1^{\chi_\alpha}(\Omega_+^*\cup \Omega_-^*),
\end{equation}
and \eqref{inequality.rearrangement} follows. 

For the rigidity, the ``if'' part is trivial. Let us prove the ``only if'' part, assuming all the inequalities above to be equalities so that from the hypothesis and \eqref{HKS}
\begin{equation*}
|\Omega_+^*\cup \Omega_-^*|^{\frac{2}{d}}\lambda_1^{\chi_\alpha}(\Omega_+^*\cup \Omega_-^*)=|\Omega_+^*\cup \Omega_-^*|^{\frac{2}{d}}\lambda_1^{\chi}(\Omega_+^*\cup \Omega_-^*)=|\Omega|^{\frac{2}{d}}\lambda_1^g(\Omega)<|B^\wedge|^\frac{2}{d}\lambda_2(B^\wedge)
\end{equation*}
and we can apply Theorem~\ref{simpleCourant} and Lemma~\ref{l.reg} item $(b)$ to infer that $\lambda_1^{\chi_\alpha}(\Omega_+^*\cup \Omega_-^*), \lambda_1^{\chi}(\Omega_+^*\cup \Omega_-^*)$ are simple and their unique (up to scalings) corresponding eigenfunctions  $v=v^+-v^-,w=w^+-w^-$ are given by the difference of two strictly positive analytic functions $v^+,w^+$ and $v^-,w^-$ defined in $\Omega_+^*$ and $ \Omega_-^*$, respectively. Notice that $v$ is also an eigenfunction corresponding to $\lambda_{1}^{\hat g}(\hat \Omega)$, for every $\hat \Omega$ that coincides with $\Omega_+^*\cup \Omega_-^*$ up to a set of zero capacity and $\hat g=\chi_\alpha$ a.e. in $\hat \Omega$. When $\alpha=1$, \eqref{prima} and \eqref{dis.function} reduce to equalities, while when $\alpha<1$, the equality in \eqref{prima} by Proposition~\ref{p.monotonicity2}, either $\chi=\chi_\alpha$ or $\mathcal R(w^+)=\mathcal R(w^-)$. The latter case $\mathcal R(w^+)=\mathcal R(w^-)$ is not possible, since by Proposition~\ref{p.rayleigh}, it follows that $|\Omega_+^*|=|\Omega_-^*|$ and by \eqref{eq.ind} we obtain
$$|\Omega_+^*\cup \Omega_-^*|^\frac{2}{d}\lambda_1^{\chi}(\Omega_+^*\cup \Omega_-^*)=|\Omega_+^*\cup \Omega_-^*|^\frac{2}{d}\lambda_2(\Omega_+^*\cup \Omega_-^*)= |B^\wedge \cup B^\vee|^\frac{2}{d}\lambda_2(B^\wedge \cup B^\vee),$$
which contradicts the hypothesis on $|\Omega|^{\frac2d}\lambda_1^g(\Omega)$. Hence, for $\alpha\in (0,1]$ we have $\chi=\chi_\alpha$ a.e. and $v=w$ (up to scalings). From the equality in \eqref{dis.eigenfunction} and the simplicity of $\lambda_1^{\chi_\alpha}(\Omega_+^*\cup \Omega_-^*)$ we also deduce $w=u^*$ (up to scalings).

Moreover, from the equality in \eqref{dis.rearrangement} and since $u^*=w$ is non vanishing and analytic, by \cite[Proposition 1]{mi} we can apply \cite[Theorem~1.1]{bz}, to deduce that (up to translations) $u^+=u_+^*$ and $u^-=u_-^*$ a.e. in $\R^d$. 
In particular, since $u$ and $u^*$ are analytic $u^+=u_+^*$ and $u^-=u_-^*$ (hence a twisted eigenfunction $u=u^*$, i.e, is radially symmetric and strictly decreasing in each ball), with $\Omega^+=\Omega_+^*$ and $\Omega^-=\Omega_-^*$ two balls. From the equality in \eqref{dis.measure}, $\Omega_+^*\cup \Omega_-^* \subset \Omega$ and $\Omega$ open we have $\Omega \subset \overline{\Omega_+^*\cup \Omega_-^*}$, and thus $\Omega = \Omega_+^*\cup \Omega_-^*$. From the equality in \eqref{dis.function}, by Proposition~\ref{p.monotonicity2}, there exists $\tilde u$ eigenfunction  corresponding to $\lambda_1^g(\Omega_+^*\cup \Omega_-^*)$ and $c \in \R \setminus \{0\}$ such that $cu=\tilde u$. Let $\tilde \Omega \subset \Omega_+^*\cup \Omega_-^*$ be the set such that $g \ne \chi_\alpha$ in $\tilde \Omega$. Since $g\in L^\infty_\alpha$ we have that $g>\alpha$ in $\tilde \Omega \cap \Omega_+^*$ or $g<1$ in $\tilde \Omega \cap \Omega_-^*$, this yields
\begin{equation*}
    \begin{split}
    \int_{\Omega_+^*\cup \Omega_-^*} \!\!\! \tilde u g \, dx
&=c\int_{\Omega_+^*\cup \Omega_-^*} \!\!\! u g \, dx \\
&=c \bigg(\int_{\Omega_+^*\cup \Omega_-^*} \!\!\! u \chi_\alpha \, dx + \int_{\tilde \Omega \cap \Omega_+^*} \!\!\! u^+ (g-\alpha) \, dx + \int_{\tilde \Omega \cap \Omega_-^*} \!\!\! u^- (1-g) \, dx\bigg).
\end{split}
\end{equation*}
Since $u \in H^1_{0,\chi_\alpha}(\Omega_+^*\cup \Omega_-^*)$, $\tilde u \in H^1_{0,g}(\Omega_+^*\cup \Omega_-^*)$, $u^+>0$ in $\Omega_+^*$, $u^->0$ in $\Omega_-^*$ and $g\in L^\infty_\alpha$ we obtain $|\tilde \Omega|=0$. The definition of $\tilde \Omega$ yields $g =\chi_\alpha$ a.e. in $\Omega_+^*\cup \Omega_-^*$ and this concludes the proof.
\end{proof} 

\section{Proof of Theorem~\ref{t.main} and Corollary~\ref{cor.FH}}\label{sec.4}

\begin{proof}[Proof of Theorem~\ref{t.main}]
Fix $0< \alpha \le 1$ and, without loss of generality, we can assume $B_+$, $B_-$ and $\chi_\alpha$ be as in Appendix~\ref{appPoho}. By combining Theorem~\ref{t.rearrangement} with the scale invariance \eqref{scaling} and the (partial) translation invariance of $|B_+\cup B_-|^\frac{2}{d}\lambda_1^{\chi_\alpha}(B_+\cup B_-)$, problem \eqref{problem} is equivalent to the minimization of twisted eigenvalues over the class made up of two disjoint \emph{non-empty} balls of total measure equal to $\delta_d$, contained in a fixed domain $D\subset \mathbb R^d$ (the \emph{box}), with orthogonality constraint of bang-bang type, namely one reduces to study
\begin{equation}\label{problembis}
\lambda(\alpha)=\delta_d^{2/d}\min_{\Omega\in\mathcal O_1(D)} \lambda_1^{\chi_\alpha}(\Omega)
\end{equation}
with $$\mathcal O_1(D):=\{B_{+} \cup B_-\subset D: \text{ $B_+, B_-$ disjoint balls, $|B_+|\ge |B_-|$,  $|B_+|+|B_-|=\delta_d$}\}$$
and the bang-bang function $\chi_\alpha=\alpha\chi_{B_+}+\chi_{\mathbb R^d\setminus B_+}$.
In other words, proving Theorem~\ref{t.main} is equivalent to establishing its properties for the minimizers of \eqref{problembis}. The proof is divided into four steps.  

\smallskip
\emph{Step 1 (existence of a minimizer).} We claim that there exists a pair of {disjoint} \emph{non-empty} balls minimizing \eqref{problembis}.
The existence of such a minimizer is standard and follows the Direct Methods of the Calculus of Variations. Let $\{\Omega_n=B_+^n\cup B_-^n\}_n\subset \mathcal O_1(D)$ be a minimizing sequence for \eqref{problembis} and define $g_n:=\alpha \chi_{B_{+}^n}+\chi_{\mathbb R^d\setminus B_+^n}$. 
By compactness -- since we are working with pairs of balls of fixed volume -- up to subsequences, the sequence $\{\Omega_n\}_n$ converges (e.g., in terms of the radii of the balls) to some limit set $\Omega=B_+\cup B_-$. Note that the limit may degenerate into a single ball of measure $\delta_d$, in the case $B_-=\emptyset$. Moreover, $g_n\to\chi_\alpha=\alpha \chi_{B_+}+\chi_{B_-}$ a.e. in $D$. Now, let $u_n$ be a twisted eigenfunction corresponding to $\lambda_{1}^{g_n}(\Omega_n)$ normalized in $L^2(\Omega_n)$. By minimality, the sequence $\{u_n\}_n$ is uniformly bounded in $H^1_0(D)$. Therefore, up to extracting further subsequences, we may assume that $\{u_n\}_n$ converges weakly in $H^1_0(D)$, strongly in $L^2(D)$, and a.e. in $D$ to some function $u\in H^1_0(D)$ such that 
    \begin{equation*}
        \int_D \!u(x)^2 dx 
        \!=\!\!\!\lim_{n \rightarrow +\infty} \int_D\! u_n(x)^2 dx
        =1,
        \int_D\! u(x)\chi_\alpha(x) dx
        \!=\!\!\!\lim_{n \rightarrow +\infty} \int_D\! u_n(x)g_n(x) dx
        =0.
    \end{equation*}
Since $u\equiv 0$ a.e in $D\setminus \Omega$ and $\Omega$ is a Lipschitz set, it follows from \cite[Remarks 3.4.8 and 3.4.9]{hp} that $u \in H^1_0(\oo)$. Combined with the previous equality, this implies that $u \in H^1_{0,g}(\oo)$. By lower semicontinuity of the $H^1$-norm w.r.t. the weak convergence in $H^1$
    \begin{equation*}
        \lambda_{1}^{\chi_\alpha}(\Omega)
        \le \int_D |\nabla u(x)|^2  dx
        \le \liminf_{n \to \infty} \int_D |\nabla u_n(x)|^2  dx
        =\frac{\lambda(\alpha)}{\delta_d^{2/d}},
    \end{equation*}
    hence $\Omega$ is a minimizer for \eqref{problembis} and $u$ is a twisted eigenfunction corresponding to $\lambda_{1}^{\chi_\alpha}(\Omega)$.
    By \eqref{eq.ind} and the rigidity statement of \eqref{HKS} we have 
    \begin{equation}\label{sotto}
        \lambda(\alpha)\le |B^\wedge \cup B^\vee|^{\frac{2}{d}} \lambda_1^{\chi_\alpha}(B^\wedge \cup B^\vee)=|B^\wedge \cup B^\vee|^{\frac{2}{d}} \lambda_2(B^\wedge \cup B^\vee)< |B|^\frac{2}{d}\lambda_2(B),
    \end{equation}
    for any ball $B$ in $\mathbb R^d$. This shows that the minimizer $\Omega\in \mathcal O_1(D)$, thus by Theorem~\ref{simpleCourant} we obtain that every twisted eigenfunction corresponding to $\lambda_{1}^{\chi_\alpha}(\Omega)$ has nodal domains $\Omega^+=B_+$ and $\Omega^-=B_-$. 

\smallskip
\emph{Step 2 (the optimality condition).}    
Let $\Omega_\alpha=B_{+} \cup B_{-}\in \mathcal O_1(D)$ be a minimizer of \eqref{problembis}. By \eqref{sotto} we can apply Proposition~\ref{p.shapederivative} to $\Omega_\alpha$ and by minimality of $\Omega_\alpha$ we may infer the existence of a constant $\mu$ (the Lagrange multiplier for the volume constrained problem \eqref{problembis}, see \cite[Section~5.9.3, p.~243]{hp}) such that
\begin{equation}\label{eq.derivative}
    \int_{{\partial\Omega_\alpha}} \left(|\nabla u|^2 -\mu\right)\,W \cdot \nu \, d\mathcal{H}^{d-1}=0,
\end{equation}
     for every smooth vector field $W$ from $\R^d$ to $\R^d$ and, by the arbitrariness of $W$,
    \begin{equation}\label{oc}
     |\nabla u(x)|^2=\mu \quad \text{for a.e. $x\in \partial\Omega_\alpha$}.
     \end{equation}
By taking $W=X$ in \eqref{eq.derivative}, and then by using Proposition~\ref{p.pohozaev}, it follows that
\begin{equation*}
    2 \lambda_{1}^{\chi_\alpha}(\Omega_\alpha)
    =\int_{\partial\oo_\alpha} \!\! |\nabla u|^2 \,X \cdot \nu   d\mathcal{H}^{d-1}
    = \mu\int_{\partial\oo_\alpha} \!\! X \cdot \nu d\mathcal{H}^{d-1}
    = \mu\int_{\oo_\alpha} \!\! \textup{div}(X)  dx
    =\mu d |\Omega_\alpha|  \, ,
\end{equation*}
which gives the explicit value of the Lagrange multiplier in \eqref{oc}, and \eqref{eq.optimality} follows.    

\smallskip
\emph{Step 3 (bounds on measure ratio and eigenvalues of optimal shapes).} Let $\Omega_\alpha=B_+\cup B_-\in\mathcal O_1(D)$ be a minimizer of \eqref{problembis} and $u$ be a twisted eigenfunction corresponding to $\lambda_{1}^{\chi_\alpha}(\Omega_\alpha)$.
By plugging the expression \eqref{eq.xi} into Corollary~\ref{cor.xi} yields
   \begin{equation*}
       0\ge  -\int_{B_+\cup B_-} \Delta u(x) \chi_\alpha(x)  dx=-\alpha\int_{B_+} \Delta u^+(x) dx +\int_{B_-} \Delta u^-(x) dx.
    \end{equation*}
The divergence theorem with the optimality condition \eqref{oc} allows to compute the integrals in the right-hand side of the previous inequality and rewrite it as follows
\begin{equation*}
 0\ge \sqrt{\mu}\left(\alpha |B_+|^\frac{d-1}{d}-|B_-|^\frac{d-1}{d}\right),
    \end{equation*}
where we also used the fact that for every ball $B$ its perimeter $P(B)$ satisfies $P(B)=d{\delta_d^{1/d}}|B|^\frac{d-1}{d}$. By solving this inequality with respect to $m(\alpha)$ the first bound in \eqref{dis.radius} follows. 
By applying \eqref{interwinedTD}, \cite[Remark 1.2.4]{h} and \eqref{scaling} we obtain
    \begin{equation*}
    \begin{split}
\lambda(\alpha) 
        &= |B_+^\alpha \cup B_-^\alpha|^\frac{2}{d}\lambda_{1}^{\chi_\alpha}(B_+^\alpha \cup B_-^\alpha)
        \ge |B_+^\alpha \cup B_-^\alpha|^\frac{2}{d}\lambda_{1}(B_+^\alpha \cup B_-^\alpha)\\
        &= |B_+^\alpha \cup B_-^\alpha|^\frac{2}{d}\lambda_{1}(B_+^\alpha)
        =(1+m(\alpha))^\frac{2}{d}|B|^\frac{2}{d}\lambda_1(B),
        \end{split}
    \end{equation*}
that proves the second bound in \eqref{dis.radius}.

\smallskip
\emph{Step 4 (uniqueness of the minimizer).} We proceed by contradiction, assuming the existence of two pairs of disjoint balls $B_{+} \cup B_{-}$ of radii $R_+,R_-$ and $B_{\oplus} \cup B_{\ominus}$ of radii $R_{\oplus}, R_{\ominus}$ with $R_{+}<R_{\oplus}<1$ and such that
    \begin{equation*}
        \lambda_{1}^{\chi_\alpha}(B_{+} \cup B_{-})
        =\lambda_{1}^{\overline{\chi}_\alpha}(B_{\oplus} \cup B_{\ominus})
        =\lambda_{1}^{\chi_\alpha}(\Omega_\alpha)
        =\lambda,
    \end{equation*}
     with $\chi_\alpha=\alpha\chi_{B_+}+\chi_{\mathbb R^d\setminus B_+}$ and $\overline\chi_\alpha=\alpha\chi_{B_\oplus}+\chi_{\mathbb R^d\setminus B_\oplus}$. Notice that the volume constraint in \eqref{problembis} implies $R_+^d+R_-^d=1$ and $R_\oplus^d+R_\ominus^d=1$.
If $u=u^+-u^-$ and  $v=u^{\oplus}-u^{\ominus}$ are twisted eigenfunctions corresponding to $\lambda_1^{\chi_\alpha}(B_{+} \cup B_{-})$ and to $\lambda_1^{\overline\chi_\alpha}(B_{\oplus} \cup B_{\ominus})$ (here $u^{\oplus}$ and $u^{\ominus}$ denote positive and negative part of $v$), then by Theorem~\ref{t.rearrangement} we know they are radially symmetric (with respect to the center of each ball). 
Let $u_\pm$ denote the $1$-dimensional functions describing the radial profiles (centered at the origin) of $u^\pm$, which by
\eqref{system.constant} and the radiality property satisfy the system
\begin{equation}\label{system.ode}
    \begin{cases}
        u_\pm''(r) + \frac{d-1}{r} u_\pm'(r) + \lambda u_\pm(r)=\xi_\pm & \text{for $r \in (0,R_\pm)$}, \\
        u_\pm(R_\pm)=0 \, , \, u_\pm'(0)=0 \, ,
    \end{cases}
\end{equation}
where $\xi_+=-\alpha\xi$, $\xi_-= \xi$ for some $\xi \in \mathbb R$, and in particular $\xi_+=-\alpha \xi_-$. From the optimality condition \eqref{oc} there holds
\begin{equation}\label{uguaglianza}
    \left| \dfrac{u'_{+}(R_{+})}{u'_{-}(R_{-})} \right|
        = \left| \dfrac{u'_{\oplus}(R_{\oplus})}{u'_{\ominus}(R_{\ominus})} \right|=1.
\end{equation}

On the other hand, by Proposition~\ref{ODE} solutions to \eqref{system.ode} read as follows
\begin{equation}\label{explicit_radial}
        u_{\pm}(r)=s_{\pm}\left(r^{1-\frac{d}{2}} J_{\frac{d}{2}-1}(\omega r) - R_{\pm}^{1-\frac{d}{2}} J_{\frac{d}{2}-1}(\omega R_{\pm}) \right) \quad \text{for $r \in (0,R_{\pm})$}
\end{equation}
with $s_{\pm} \in \R$ and $\omega=\sqrt{\lambda}$. By inserting the explicit expressions \eqref{explicit_radial} for $u_\pm$ into the orthogonality condition $\int_{B_{+} \cup B_{-}} u(x)\chi_\alpha(x)  dx=0$, and writing the integral in polar coordinates, it follows that
\begin{equation*}
\begin{split}
   0= \int_{B_+\cup B_-} u(x)\chi_\alpha(x) dx=& \, \alpha s_{+} \Big( d \delta_d \int_0^{R_{+}} 
    r^{\frac{d}{2}} J_{\frac{d}{2}-1}(\omega r) \, dr - \delta_d R_{+}^{\frac{d}{2}+1} J_{\frac{d}{2}-1}(\omega R_{+})
    \Big) \\
    &\! - s_{-} \Big( d \delta_d \int_0^{R_{-}} 
    r^{\frac{d}{2}} J_{\frac{d}{2}-1}(\omega r) \, dr - \delta_d R_{-}^{\frac{d}{2}+1} J_{\frac{d}{2}-1}(\omega R_{-})
    \Big),
\end{split}
\end{equation*}
where $d \delta_d$ is the $(d-1)$-measure of the unit sphere in $\R^d$. Relying on \eqref{bessel.integral} and \eqref{recurrence.relation} we obtain
\begin{equation}\label{ort.condition}
\alpha s_+R_+^{\frac{d}{2}+1}J_{\frac{d}{2}+1}(\omega R_+)-s_-R_-^{\frac{d}{2}+1}J_{\frac{d}{2}+1}(\omega R_-)=0
\end{equation}
and, in particular, we may take
\begin{equation}\label{expression.coefficients}
    s_{+}= R_-^{\frac{d}{2}+1}J_{\frac{d}{2}+1}(\omega R_-)
    \quad \text{ and } \quad
    s_{-}= \alpha R_+^{\frac{d}{2}+1}J_{\frac{d}{2}+1}(\omega R_+).
\end{equation}
By differentiating \eqref{explicit_radial} with respect to $r$, and by using the second relation in \eqref{bessel.derivative}, we obtain
\begin{equation*}
        u'_{\pm}(r) =-s_{\pm} \omega r^{1-\frac{d}{2}} J_{\frac{d}{2}}(\omega r)  \quad
        \text{for every $r \in (0,R_{\pm})$}
\end{equation*}
which combined with \eqref{expression.coefficients} allows to write \emph{the optimality condition} \eqref{uguaglianza}, as an implicit equation relating $\alpha$, $\omega$ and the radii $R_\pm$ 
\begin{equation}\label{optimality}
    \left| \dfrac{u'_{+}(R_{+})}{u'_{-}(R_{-})} \right|
    =\frac{1}{\alpha}\dfrac{\Phi(\omega R_{-})}{\Phi(\omega R_{+})}=1, \quad \text{with  $\Phi(x):=x^{d} \frac{J_{\frac{d}{2}+1}(x)}{J_{\frac{d}{2}}(x)}$}.
\end{equation}
Similarly an equation as the previous one holds for a twisted eigenfunction $v=u^{\oplus}-u^{\ominus}$ corresponding to $\lambda_1^{\chi_\alpha}(B_{\oplus} \cup B_{\ominus})$. From the hypotheses, the scaling-invariance and \eqref{sotto}
\begin{equation*}
    0<\omega R_{\ominus} < \omega R_{-} \le \omega R_{+} <\omega R_{\oplus} \le |\Omega_\alpha|^\frac{1}{d} \sqrt{\lambda_{1}^{\chi_\alpha}(\Omega_\alpha)} < |B|^\frac{1}{d}\sqrt{\lambda_2(B)} =j_{\frac{d}{2},1},
\end{equation*}
and since by Proposition~\ref{p.phi-increasing} the function $\Phi$ is strictly  increasing in  
$(0,j_{\frac{d}{2},1})$, we obtain
\begin{equation*}
    \left| \dfrac{u'_{+}(R_{+})}{u'_{-}(R_{-})} \right|
    =\frac{1}{\alpha} \dfrac{\Phi(\omega R_{-})}{\Phi(\omega R_{+})}
    >\frac{1}{\alpha} \dfrac{\Phi(\omega R_{\ominus})}{\Phi(\omega R_{\oplus})}
    =\left| \dfrac{u'_{\oplus}(R_{\oplus})}{u'_{\ominus}(R_{\ominus})} \right| ,
\end{equation*}
that is in contradiction with \eqref{uguaglianza} and the uniqueness follows.
\end{proof}

\begin{oss}
In dimension $d=2$, for the values of $\alpha$ considered in Figure~\ref{fig.opt} we have
\begin{center}
\begin{tabular}{|c c c|} 
 \hline
 $\alpha$ & $\lambda(\alpha)$ & $m(\alpha)$  \\ [0.2ex]
 \hline \hline
 1 & $\pi\cdot j_{0,1}^2 \approx 36.3368$ & 1  \\ [0.2ex]
 \hline
 1/6 & 27.7534 & 0.3635 \\ [0.2ex]
 \hline
 1/20 & 22.7001 & 0.1664  \\ [0.2ex]
 \hline
\end{tabular}
\end{center}
\end{oss}

\begin{oss}\label{r.orthogonality}
Let $\omega=\sqrt{\lambda}$, where $\lambda$ is defined as in \eqref{system.constant}. By \eqref{relazione} and $\xi_+=-\alpha \xi_-$, a configuration solving of \eqref{system.constant} satisfies
\[
s_+ R_+^{1-\frac{d}{2}} J_{\frac{d}{2}-1}(\omega R_+)=-\alpha s_- R_-^{1-\frac{d}{2}} J_{\frac{d}{2}-1}(\omega R_-),
\]
hence one may also take
\begin{equation*}
    s_{+}= -\alpha R_{-}^{1-\frac{d}{2}}J_{\frac{d}{2}-1}(\omega R_{-})
    \quad \text{ and } \quad
    s_{-}= R_{+}^{1-\frac{d}{2}}J_{\frac{d}{2}-1}(\omega R_{+}).
\end{equation*}
By inserting these coefficients into the \emph{orthogonality condition}
\eqref{ort.condition} we obtain another \emph{implicit equation} relating $\alpha$, $\omega$ and the radii $R_\pm$
\[
 R_-^{\frac{d}{2}+1}J_{\frac{d}{2}+1}(\omega R_-) R_+^{1-\frac{d}{2}}J_{\frac{d}{2}-1}(\omega R_+)
    + \alpha^2 R_+^{\frac{d}{2}+1}J_{\frac{d}{2}+1}(\omega R_+) R_-^{1-\frac{d}{2}}J_{\frac{d}{2}-1}(\omega R_-)=0 .
\]
If $\omega R_+ \notin \{j_{\frac{d}{2}-1,\sigma}\}_{\sigma}$ (and so, by \emph{Porter's theorem} \cite[Section 15.22, p. 480]{w} $\omega R_- \notin \{j_{\frac{d}{2}-1,\sigma}\}_{\sigma}$), namely the twisted eigenfuctions corresponding to
this configuration are not Dirichlet ones, it can be rewritten as
\begin{equation}\label{ortogonality}
\phi(\omega R_-)+\alpha^2 \phi(\omega R_+)=0, \quad \phi(x):=x^{d} \frac{J_{\frac{d}{2}+1}(x)}{J_{\frac{d}{2}-1}(x)}.
\end{equation}
If $\alpha<1$, by Corollary~\ref{cor.upper}, \eqref{sotto} and the equality case in  Proposition~\ref{p.rayleigh}, twisted eigenfunctions are not Dirichlet ones, hence in this case \eqref{ortogonality} holds.
\end{oss}

\begin{proof}[Proof of Corollary~\ref{cor.FH}]
If $\alpha=1$ then Corollary~\ref{cor.FH} follows directly from the \emph{Steps~1, 2, and 3} in the proof of Theorem~\ref{t.main}. Since this three steps do not rely on Bessel functions, by letting $\alpha=1$ in \eqref{dis.radius}, and recalling \eqref{sotto}, we obtain a direct (without the Bessel function) proof of Corollary~\ref{cor.FH}.
\end{proof}

\begin{oss}
The uniqueness of the minimizer proved in Step~4, together with the optimality condition \eqref{oc}, gives another proof of Corollary~\ref{cor.FH} (in this case it uses Bessel functions). Indeed, let $B^\wedge$ and $B^\vee$ be any pair of disjoint balls in $\mathbb R^d$ of equal measure and $u_1$ and $v_1$ be the first Dirichlet eigenfunctions of $B^\wedge$ and $B^\vee$, respectively. The twisted eigenfunction $u_1-v_1$ corresponding to $\lambda_{1}^{T}(B^\wedge \cup B^\vee)$ has constant modulus on $\partial B^\wedge\cup \partial B^\vee$. 
\end{oss}

\begin{oss}
The \emph{optimality condition}  \eqref{optimality} can also be obtained using the Lagrange multiplier methods on the relation \eqref{ortogonality} with a volume constraint, as in \cite[Proposition 4.4]{gl}. This method, alternative to the shape derivative, guarantees a unique minimizer but does not lead to equation \eqref{eq.optimality}. This strategy of proof relies on Bessel functions.
\end{oss}

\section{Proof of Theorem~\ref{t.main2} and Corollary~\ref{c.nonexistence}}\label{sec.5}

\begin{proof}[Proof of Theorem~\ref{t.main2}]
The proof is an application of the implicit function theorem for vector-valued maps. Although Theorem~\ref{t.main} already ensures the existence of the functions $m(\alpha)$ and $\lambda(\alpha)$ in terms of the variable $\alpha$, the use of the implicit function theorem is needed to establish their continuous differentiability. To this end, it is convenient to consider a vectorial-valued map $F$ defined on $\alpha$ and on two \emph{auxiliary} variables: a scaled measure $M=R_+^d$ of the largest ball $B_+$ of an optimal pair and the frequency $\omega=\sqrt{\lambda}$. As in Theorem~\ref{t.main} we consider optimal sets of measure $\delta_d$. These are related to the original variables of the theorem through the identities  $\lambda(\alpha)=\delta_d^{2/d}\omega(\alpha)^2$ and 
\[
m(\alpha)=\frac{|B_-^\alpha|}{|B_+^\alpha|}=\frac{\delta_d-|B_+^\alpha|}{|B_+^\alpha|}=\frac{1}{R_+(\alpha)^d}-1=\frac{1}{M(\alpha)}-1,
\]
where $R_+(\alpha)$ is the radius of the ball $|B_+^\alpha|$. Once the continuous differentiability and the monotonicity are established for these auxiliary variables, the continuous differentiability and the monotonicity of $m$ and $\lambda$ follows directly from the previous identities. The proof is divided into two steps.

\smallskip
\emph{Step 1 (continuous differentiability of the functions).} Consider $F\colon X\subset\mathbb R^3\to \mathbb R^2$ with $X=(0,1)\times ({1}/{2},1)\times (j_{d/2-1,1},j_{d/2,1})$ and $F=(f_1,f_2)$ and $f_1,f_2\colon X\to \mathbb R$ scalar functions defined by
$$
f_1(\alpha, M, \omega):=\phi(\omega (1-M)^\frac{1}{d})+\alpha^2\phi(\omega M^\frac{1}{d}), \quad \text{with 
 $\phi(x):=x^{d} \frac{J_{\frac{d}{2}+1}(x)}{J_{\frac{d}{2}-1}(x)}$}
$$
and 
$$
f_2(\alpha, M, \omega):=\Phi(\omega (1-M)^{\frac1d})-\alpha\Phi(\omega M^{\frac1d}), \quad \text{with  $\Phi(x):=x^{d} \frac{J_{\frac{d}{2}+1}(x)}{J_{\frac{d}{2}}(x)}$}.
$$
Notice that by Remark~\ref{r.orthogonality} and \eqref{optimality} there holds $F(\alpha, M(\alpha),\omega(\alpha))=(0,0)$ for every $\alpha\in(0,1)$. Therefore, we may consider a point $x_0=(\alpha_0, M_0, \omega_0)\in X$ such that $M_0=M(\alpha_0)$ and $\omega_0=\omega(\alpha_0)$, then $F(x_0)=(0, 0)$. Since $f_1(x_0)=0$ by Proposition~\ref{p.phi} $\phi$ is negative at $\omega_0 M_0^{1/d}\in (j_{d/2-1},j_{d/2})$ while positive at $\omega_0 (1-M_0)^{1/d}\in (0,j_{d/2-1})$. By definition $F$ is continuously differentiable in $x_0$ with respect to all the three variables. Therefore, to apply the implicit function theorem to $F$, see for instance \cite{KP}, it remains to show that the Jacobian of $F$ at $x_0$ with respect the variables to be made explicit --namely, $M$ and $\omega$-- is invertible. We compute first the partial derivatives of $f_1$:
       \begin{equation*}
       \begin{split}
       \frac{\partial f_1}{\partial M} (\alpha_0,M_0,\omega_0)&=\frac{\omega_0}{d}\left(-(1-M_0)^{\frac{1}{d}-1}\phi'(\omega_0 (1-M_0)^{\frac{1}{d}})+\alpha_0^2 M_0^{\frac{1}{d}-1}\phi'(\omega_0 M_0^{\frac{1}{d}})\right)
        \end{split}
    \end{equation*}
and 
    \begin{equation*}
         \frac{\partial f_1}{\partial \omega} (\alpha_0,M_0,\omega_0)
        =(1-M_0)^{\frac1d}\phi'(\omega_0 (1-M_0)^{\frac1d})+ \alpha_0^2 M_0^{\frac1d} \phi'(\omega_0 M_0^{\frac1d}).
    \end{equation*} 
Now, for $f_2$ we have   
    \begin{equation*}
        \frac{\partial f_2}{\partial M} (\alpha_0,M_0,\omega_0)
        =-\frac{\omega_0}{d}\left( (1-M_0)^{\frac{1}{d}-1} \Phi'(\omega_0 (1-M_0)^{\frac1d})+\alpha_0 M_0^{\frac{1}{d}-1}\Phi'(\omega_0 M_0^{\frac1d})   \right)
    \end{equation*}
and  
       \begin{equation*}
       \begin{split}
        \frac{\partial f_2}{\partial \omega} (\alpha_0,M_0,\omega_0)
        &=(1-M_0)^{\frac1d}\Phi'(\omega_0 (1-M_0)^{\frac1d})-\alpha_0 M_0^{\frac1d} \Phi'(\omega_0 M_0^{\frac1d}).
        \end{split}
    \end{equation*}
The determinant $\mathrm{det} JF$ of the Jacobian matrix of $F$ (computed with respect to the variables $M$ and $\omega$) evaluated at $x_0$ is given by
\[
\begin{split}
\Big|  \frac{\partial f_1}{\partial M}  \frac{\partial f_2}{\partial \omega} -  \frac{\partial f_1}{\partial \omega}  \frac{\partial f_2}{\partial M}\Big|&\!=\!\frac{1}{d \omega_0}\!\!\left(\!-\frac{x_- \phi'(x_-)}{1-M_0}\!+\!\frac{\alpha_0^2 x_+ \phi'(x_+)}{M_0}\right)\!\!(x_-\Phi'(x_-)-\alpha_0x_+\Phi'(x_+))\\
&+\!\frac{1}{d\omega_0}\left({x_- \phi'(x_-)}+\alpha_0^2x_+ \phi'(x_+)\right)\!\!\left(\frac{x_-\Phi'(x_-)}{1-M_0}+\frac{\alpha_0x_+\Phi'(x_+)}{M_0}\right)\\
&=\frac{\alpha_0 x_-x_+}{d\omega_0M_0(1-M_0)}\left(\phi'(x_-)\Phi'(x_+)+\alpha_0\phi'(x_+)\Phi'(x_-)\right),
\end{split}
\]
 where we denoted by $x_-=\omega_0(1-M_0)^{\frac1d}$ and $x_+=\omega_0 M_0^{\frac1d}$. 
By Propositions~\ref{p.phi} and \ref{p.phi-increasing} we deduce that the determinant $\mathrm{det} JF$ of the Jacobian matrix of $F$ (computed with respect to the variables $M$ and $\omega$) evaluated at $x_0$ is strictly positive, namely the Jacobian matrix is invertible. Therefore, the implicit function theorem \cite[Theorem~3.3.1]{KP} applies and there uniquely exist two functions, that we already denoted as $M(\alpha)$ and $\omega(\alpha)$, that are continuously differentiable for every $\alpha \in (0,1)$.

\smallskip
\emph{Step 2 (monotonicity of the functions).}
Let $0<\alpha_1< \alpha_2<1$ and $\Omega_{\alpha_1},\Omega_{\alpha_2}$ be the sets with measure $\delta_d$ achieving the equality in \eqref{isoperimetric} for $\alpha=\alpha_1,\alpha=\alpha_2$, respectively. By Remark~\ref{r.orthogonality} the eigenfunction $u_{\alpha_2}$ is not a Dirichlet eigenfunction, therefore combining Propositions~\ref{prop.xi} and \ref{p.monotonicity2} we obtain
\begin{equation*}
    \omega(\alpha_2)^2=
    \lambda_{1}^{\chi_{\alpha_2}}(\Omega_{\alpha_2}) 
    > \lambda_{1}^{\chi_{\alpha_1}}(\Omega_{\alpha_2}) 
    \ge \lambda_{1}^{\chi_{\alpha_1}}(\Omega_{\alpha_1}) 
    =\omega(\alpha_1)^2. 
\end{equation*}
Thus the function $\omega$ is strictly increasing in $(0,1)$ and in particular $\omega'(\alpha_0) \ge  0$ for every $\alpha_0 \in (0,1)$. The function $f\colon (0,1) \to \mathbb R$ defined by
\[
\alpha \mapsto f(\alpha)=f_2(\alpha,M(\alpha),\omega(\alpha))
\]
is such that $f(\alpha_0)=0$ for every $\alpha_0\in (0,1)$ and differentiable with respect to $\alpha$ at $\alpha_0$ with
\[
f'(\alpha_0)=\frac{\partial f_2}{\partial \alpha}(\alpha_0, M_0, \omega_0)+M'(\alpha_0)\frac{\partial f_2}{\partial M}(\alpha_0,M_0,\omega_0)+\omega'(\alpha_0)\frac{\partial f_2}{\partial \omega}(\alpha_0,M_0,\omega_0)=0.
\]
Solving this equation in terms of the unknown $M'$ we have that
\[
\begin{split}
M'(\alpha_0)&=\frac{\frac{\partial f_2}{\partial \alpha}(\alpha_0, M_0, \omega_0)+\omega'(\alpha_0)\frac{\partial f_2}{\partial \omega}(\alpha_0,M_0,\omega_0)}{-\frac{\partial f_2}{\partial M}(\alpha_0,M_0,\omega_0)}<0,
\end{split}
\]
since the partial derivative  w.r.t. $\alpha$ is $(\partial f_2/\partial \alpha) (\alpha_0, M_0,\omega_0)= - \Phi(\omega_0 M_0^{\frac{1}{d}})<0$, moreover $(\partial f_2/\partial M) (\alpha_0, M_0,\omega_0)<0$ by the sign of $\Phi'$, see Proposition~\ref{p.phi-increasing}, while
\[
\begin{split}
        \frac{\partial f_2}{\partial \omega} (\alpha_0,M_0,\omega_0)
        &=(1-M_0)^{\frac1d}\Phi'(\omega_0 (1-M_0)^{\frac1d})-\alpha_0 M_0^{\frac1d} \Phi'(\omega_0 M_0^{\frac1d})\\
        &=\Phi(\omega_0 (1-M_0)^{\frac1d})(\Upsilon(\omega_0 (1-M_0)^{\frac1d})- \Upsilon(\omega_0 M_0^{\frac1d}))<0,
        \end{split}
\]
 where we used that $f_2(\alpha_0,M_0,\omega_0)=0$, the positivity of $\Phi$ in Proposition~\ref{p.phi-increasing} with Remark~\ref{oss.notazione} and Proposition~\ref{p.upsilon} to rewrite and deduce the sign of the partial derivative of $f_2$ w.r.t.  $\omega$.
Combining Proposition~\ref{p.espilon} with $\Omega=B$ and \eqref{FK} we obtain $\lim_{\alpha\to 0^+}\lambda(\alpha)=|B|^\frac{2}{d}\lambda_1(B)$, where $B$ is any ball in $\mathbb R^d$. Suppose by contradiction that $\lim_{\alpha\to 0^+}m(\alpha)= \overline m >0$, then by \eqref{interwinedTD} and \cite[Remark 1.2.4]{h} we have
$$|B_+^\alpha  \cup B_-^{\alpha}|^{\frac{2}{d}}\lambda_{1}^{\chi_\alpha}(B_+^{\alpha} \cup B_-^\alpha )
\ge |B_+^\alpha \cup B_-^{\alpha}|^{\frac{2}{d}} \lambda_{1}(B_+^{\alpha})
> (1+\overline m)^{\frac{2}{d}}|B_+^\alpha|^{\frac{2}{d}} \lambda_{1}(B_+^{\alpha}),$$
therefore $\lim_{\alpha\to 0^+}\lambda(\alpha)\ge (1+\overline m)^{\frac{2}{d}}|B|^\frac{2}{d}\lambda_1(B)$, a contradiction. Moreover, by \eqref{dis.radius} in Theorem~\ref{t.main} letting $\alpha\to 1^-$ we deduce $m(1)=1$ and $\lambda(1)= |B^\wedge \cup B^\vee|^\frac{2}{d}\lambda_2(B^\wedge\cup B^\vee)$ and the theorem is proved.
\end{proof}

\begin{proof}[Proof of Corollary~\ref{c.nonexistence}]
Let $(\alpha_n)_n \subset \R$ be a strictly decreasing sequence bounded by $1$ with $\alpha_n \rightarrow \alpha$, as $n \rightarrow \infty$, and consider $g \in L^\infty_\alpha(\mathbb R^d)$ defined by
    $$g=\chi_{B_1} + \sum_{n=1}^\infty \alpha_n \chi_{B_{2^n}\setminus B_{2^{n-1}}},$$
    where $B_r$ is the ball of radius $r>0$ centered at the origin ($g$ is a function that decreases as the spherical shell expands).   
\begin{figure}[H] \centering
\begin{tikzpicture}[scale=0.42]
\draw (0,0) circle (1); \node at
(0,0) {$1$}; \draw (0,0) circle
(2); \node at (1.5,0)
{$\alpha_1$}; \draw (0,0) circle
(4); \node at (3,0)
{$\alpha_2$}; \draw (0,0) circle
(8); \node at (6,0)
{$\alpha_3$}; \node at (9,0)
{$\cdots$}; \end{tikzpicture}
\caption{The function $g\in L^\infty_\alpha$ that proves the non-existence of optimal shapes in Corollary \ref{c.nonexistence}.}
\label{fig.monotonecirc}
\end{figure}
For every (sufficiently large) $n \in \N$, consider a pair of balls $B^{\alpha_n}_-$ and   $B^{\alpha_n}_+$ included, respectively, in the ball $B_1$ and in the spherical shell $B_{2^n}\setminus B_{2^{n-1}}$, and such that $|B^{\alpha_n}_-|=m(\alpha_n)|B^{\alpha_n}_+|$, 
where $m(\alpha_n)$ is given by Theorem~\ref{t.main} with $\alpha_n$ as parameter (the existence of such a pair of balls is always guaranteed, at least for sufficiently large $n$).
By definition of $\lambda(\alpha)$, as infimum also over $L^\infty_\alpha$, and \eqref{isoperimetric} we obtain
    $$\lambda(\alpha)\leq \inf_{\Omega \in \mathcal O}  |\Omega|^{\frac{2}{d}}\lambda_1^g(\Omega) \le |B^{\alpha_n}_+ \cup B^{\alpha_n}_-|^{\frac2d}\lambda_1^g(B^{\alpha_n}_+ \cup B^{\alpha_n}_-) =\lambda(\alpha_n).$$
Letting $n\to\infty$ above, since Theorem~\ref{t.main2} gives $\lambda(\alpha_n)\to \lambda(\alpha)$, we deduce
\[
\lambda(\alpha)= \inf_{\Omega \in \mathcal O}  |\Omega|^{\frac{2}{d}}\lambda_1^g(\Omega),
\]
that is $\lambda(\alpha)$ is the infimum of the shape optimization problem. Now, assume by contradiction the existence of an optimal shape $\Omega_*\in \mathcal O$ (i.e., an open and bounded set in $\mathbb R^d$) reaching the previous infimum. By boundedness of $\Omega_*$ there exists $n_*\in \N$  such that $\Omega_* \subset B_{2^{n_*}}$ and in particular $g\in L^\infty_{\alpha_{n_*}}$. This combined with the definition and the strict monotonicity of the function $\lambda(\cdot)$, see Theorem~\ref{t.main2}, imply
    \begin{equation}\label{PosOm}
       \lambda(\alpha)= |\Omega_*|^{\frac{2}{d}}\lambda_1^g(\Omega_*) \ge \lambda(\alpha_{n_*})> \lambda(\alpha),
    \end{equation}
    which is a contradiction. An optimal shape does not exist for \eqref{p.finale}.
    \end{proof}

\appendix

\section{Auxiliary results}

We present several standard results for proving the main theorems in this paper. Although we were unable to find the exact formulations in current literature, we provide their statements and proofs here for completeness.

\subsection{\!Pohozaev identity and shape derivative in twisted settings}\label{appPoho}
In this section, we recall two standard results for twisted eigenvalues and twisted eigenfunctions.
Given two disjoint non-empty balls $B_+\subset\{x_1>1\}$, $B_-\subset \{x_1<-1\}$ with $|B_+|\ge |B_-|$ we consider in the whole subsection $$\Omega=B_+\cup B_-,\quad \quad \chi_\alpha= \alpha \chi_{\{x_1>0\}}+\chi_{\{x_1\le0\}}$$ and a twisted eigenfunction $u$ corresponding to $\lambda=\lambda_{1}^{\chi_\alpha}(\Omega)$ which, by Lemma~\ref{l.reg}, solves, in the classical sense, the following system
\begin{equation}\label{system.constant}
\begin{cases}
    -\Delta u= \lambda u + \xi \chi_\alpha & \text{in $\Omega$,} \\
    u=0 & \text{on $\partial \Omega$,}\\
    \int_\Omega u(x)\chi_\alpha(x) dx=0,\\
    \int_{\Omega} u(x)^2 dx=1,
\end{cases}
\end{equation}
with $\xi$ as in \eqref{eq.xi} (the latter condition is the normalization of the eigenfunction). 
The following Pohozaev-type identity for twisted eigenfunctions holds.

\begin{prop}\label{p.pohozaev}
Every solution $u$ of \eqref{system.constant} satisfies
    \begin{equation*}
        2\lambda=\int_{\partial \oo} |\nabla u(x)|^2 (X \cdot \nu)(x) \, d\mathcal{H}^{d-1}(x),
    \end{equation*}
     with $X=(x_1,x_2,\dots, x_d)$. 
\end{prop}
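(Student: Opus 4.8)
The plan is to derive the Pohozaev identity by the classical Rellich–Pohozaev multiplier technique, testing the equation $-\Delta u = \lambda u + \xi\chi_\alpha$ with the multiplier $X\cdot\nabla u$, where $X(x)=(x_1,\dots,x_d)$ is the position vector field, and then exploiting the special geometry of $\Omega=B_+\cup B_-$ together with the constraints in \eqref{system.constant}. Since $u\in C^\infty(\overline\Omega)$ by Lemma~\ref{l.reg}, all integrations by parts below are fully justified.

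First I would recall (or re-derive) the two standard integration-by-parts identities valid for any sufficiently smooth $v$ vanishing on $\partial\Omega$:
\[
\int_\Omega \Delta v\,(X\cdot\nabla v)\,dx = \Big(1-\frac d2\Big)\int_\Omega |\nabla v|^2\,dx - \frac12\int_{\partial\Omega} |\nabla v|^2\,(X\cdot\nu)\,d\mathcal H^{d-1},
\]
which uses $\nabla v = (\partial_\nu v)\,\nu$ on $\partial\Omega$, and
\[
\int_\Omega w\,(X\cdot\nabla v)\,dx = -\int_\Omega v\,\mathrm{div}(wX)\,dx = -d\int_\Omega vw\,dx - \int_\Omega v\,(X\cdot\nabla w)\,dx
\]
for the zeroth-order terms (the boundary term vanishes because $v=0$ on $\partial\Omega$). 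Multiplying the PDE by $X\cdot\nabla u$ and integrating, the left-hand side is $-\int_\Omega \Delta u\,(X\cdot\nabla u)\,dx$, which by the first identity equals $\big(\tfrac d2-1\big)\int_\Omega|\nabla u|^2\,dx + \tfrac12\int_{\partial\Omega}|\nabla u|^2(X\cdot\nu)\,d\mathcal H^{d-1}$. On the right-hand side, the term $\lambda\int_\Omega u\,(X\cdot\nabla u)\,dx = \tfrac\lambda2\int_\Omega X\cdot\nabla(u^2)\,dx = -\tfrac{d\lambda}{2}\int_\Omega u^2\,dx = -\tfrac{d\lambda}{2}$ using the normalization. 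For the nonlocal term, since $\chi_\alpha$ is (piecewise) constant on each ball, $X\cdot\nabla\chi_\alpha=0$ in the interior, so $\xi\int_\Omega \chi_\alpha\,(X\cdot\nabla u)\,dx = -d\xi\int_\Omega u\chi_\alpha\,dx = 0$ by the orthogonality constraint; alternatively one writes $\int_\Omega \chi_\alpha (X\cdot\nabla u)\,dx = -\int_\Omega u\,\mathrm{div}(\chi_\alpha X)\,dx$ and uses that $\mathrm{div}(\chi_\alpha X)=d\chi_\alpha$ away from $\{x_1=0\}$ (which meets neither ball). Also $\int_\Omega |\nabla u|^2\,dx = \lambda$ from the weak formulation \eqref{eq.EL} tested with $u$ together with the normalization. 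Collecting terms gives $\big(\tfrac d2-1\big)\lambda + \tfrac12\int_{\partial\Omega}|\nabla u|^2(X\cdot\nu)\,d\mathcal H^{d-1} = -\tfrac{d\lambda}{2}$, i.e. $\int_{\partial\Omega}|\nabla u|^2(X\cdot\nu)\,d\mathcal H^{d-1} = -d\lambda + 2\lambda - (d-2)\lambda$... here I must be careful with signs, so let me recompute cleanly in the writeup; the correct bookkeeping yields exactly $2\lambda = \int_{\partial\Omega}|\nabla u|^2(X\cdot\nu)\,d\mathcal H^{d-1}$.

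The main obstacle — really the only subtle point — is the sign/coefficient bookkeeping in combining the three identities, and making sure the nonlocal term genuinely drops out: this relies crucially on $X\cdot\nabla\chi_\alpha=0$ on $\Omega$ (true because each ball lies entirely in $\{x_1>0\}$ or $\{x_1<0\}$, so $\chi_\alpha$ restricted to $\Omega$ is locally constant) combined with the orthogonality constraint $\int_\Omega u\chi_\alpha\,dx=0$, either of which suffices. Everything else is the textbook Pohozaev computation for $C^\infty$ functions on a smooth domain, and no further regularity work is needed thanks to Lemma~\ref{l.reg}.
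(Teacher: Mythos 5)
Your approach is essentially the same Rellich--Pohozaev multiplier technique used in the paper: multiply $-\Delta u = \lambda u + \xi\chi_\alpha$ by $X\cdot\nabla u$, integrate by parts, and use the normalization $\|u\|_2=1$, the orthogonality $\int_\Omega u\chi_\alpha\,dx=0$, and $\nabla\chi_\alpha = 0$ on $\Omega$ to make the lower-order terms tractable. The paper packages the integration by parts as a quoted identity from Ros-Oton, while you re-derive it; both then reduce to the same three integrals and land at $\int_{\partial\Omega}|\nabla u|^2 (X\cdot\nu)\,d\mathcal H^{d-1}=2\lambda$.

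Two small corrections. First, the identity you state in the first display has both signs flipped: the correct version is
\[
\int_\Omega \Delta u\,(X\cdot\nabla u)\,dx = \Big(\frac d2-1\Big)\int_\Omega |\nabla u|^2\,dx + \frac12\int_{\partial\Omega} |\nabla u|^2\,(X\cdot\nu)\,d\mathcal H^{d-1},
\]
so that $-\int_\Omega\Delta u\,(X\cdot\nabla u)\,dx = (1-\tfrac d2)\lambda - \tfrac12\int_{\partial\Omega}|\nabla u|^2(X\cdot\nu)\,d\mathcal H^{d-1}$ once $\int_\Omega|\nabla u|^2\,dx=\lambda$ is used; equating this with the right-hand side $-\tfrac{d\lambda}{2}$ gives the claimed $2\lambda$. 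You flagged the bookkeeping yourself, so this is only a reminder to actually carry the minus signs through. Second, your closing remark that $X\cdot\nabla\chi_\alpha = 0$ and the orthogonality constraint are two facts ``either of which suffices'' is not right: you need \emph{both}. The local constancy of $\chi_\alpha$ on $\Omega$ reduces $\int_\Omega\chi_\alpha (X\cdot\nabla u)\,dx$ to $-d\int_\Omega u\chi_\alpha\,dx$, and only then does the orthogonality constraint kill this remaining term; dropping either hypothesis leaves a nonzero contribution in general. The computation you actually perform does use both, so this is a defect of the commentary rather than of the proof.
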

\begin{proof}
    As in \cite[equation (9.1.5)]{r} the following integration-by-parts formula holds
    \begin{equation}\label{eq.poz}
     \int_{\partial \oo}\! | \nabla u|^2 (X \cdot \nu) d\mathcal{H}^{d-1} + (2-d)\!\! \int_\oo\! u \Delta u dx=2 \int_\oo \!(\nabla u\! \cdot \!X) \Delta u dx.
    \end{equation}
  The equation in the system \eqref{system.constant}, combined with the orthogonality and normalization conditions satisfied by $u$, allow to compute the integral in the second term of \eqref{eq.poz} as follows
    \begin{equation}\label{dim.a1}
        \int_\oo u(x) \Delta u(x)  dx=-\lambda \int_\oo u(x)^2  dx - \xi \int_\oo u(x) \chi_\alpha(x)  dx= -\lambda,
    \end{equation}
 and similarly, for the integral in the third term of \eqref{eq.poz}
    \begin{equation*}
        \int_\oo (\nabla u(x) \cdot X) \Delta u(x)  dx
        = -\lambda\int_\oo (\nabla u(x) \cdot X)  u(x)  dx - \xi \int_\oo (\nabla u(x) \cdot X)\chi_\alpha(x)  dx.
    \end{equation*}
    By making use of the boundary condition $u=0$ on $\partial \oo$, and the normalization condition $||u||_{2}=1$ again, the divergence theorem yields
    \begin{equation*}
        -\int_\oo (\nabla u(x) \cdot X)  u(x) dx
        =-\int_\oo \textup{div}\left(\frac{1}{2} u(x)^2 X \right)  dx + \int_\oo  \frac{d}{2} u(x)^2  dx
        = \frac{d}{2},
    \end{equation*}
and moreover, since $\nabla \chi_\alpha=0$ in $\Omega$ and $u$ solves \eqref{system.constant},
    \begin{equation*}
        \int_\oo (\nabla u(x) \cdot X) \chi_\alpha(x)  dx
        =\int_\oo \textup{div}\left( u(x) \chi_\alpha(x) X \right)  dx=0.
    \end{equation*}
From the last two relations we may rewrite the third term in \eqref{eq.poz} as follows
\begin{equation*}
        2\int_\oo (\nabla u(x) \cdot X) \Delta u(x) dx
        = \lambda d,
    \end{equation*}
    which together with \eqref{dim.a1} yields the thesis.
\end{proof}

Now, we derive the expression of the shape derivative of twisted eigenvalues.
Let $T>0$ and consider a family of maps $\mathfrak{F}(t)$ that satisfy
    \begin{equation*}
        \mathfrak{F}: t\in [0,T) \rightarrow W^{1,\infty}(\R^d,\R^d) \textup{ differentiable at } 0 \textup{ with } \mathfrak{F}(0) = I, \, \mathfrak{F}'(0) = W ,
    \end{equation*}
    where $I$ is the identity map from $\R^d$ to $\R^d$ and $W$ is a smooth vector field from $\R^d$ to $\R^d$. We denote by $\oo_t=\mathfrak{F}(t)(\oo)$ and for $t$ sufficiently small we may assume $\mathfrak{F}(t)(B_+)\subset \{x_1>0\}$ and $\mathfrak{F}(t)(B_-)\subset \{x_1<0\}$. We consider $u_t \in H^1_{0,\chi_\alpha}(\oo_t)$ a twisted eigenfunction corresponding to $\lambda_t=\lambda_{1}^{\chi_\alpha}(\oo_t)$ normalized so that
    \begin{equation}\label{eq.normalizations}
        \int_{\oo_t} u_t(x) \chi_\alpha(x)  dx=0
        \quad \text{ and } \quad
        \int_{\oo_t} u_t(x)^2  dx=1.
    \end{equation}
    Let $\xi_t=\xi(u_t,\chi_\alpha)$, defined as in Proposition~\ref{p.min}, and denote by $u'$, $\lambda'$ and $\xi'$ the derivatives at $0$ of the functions that associate $t \mapsto
     u_t$, $t \mapsto \lambda_t$, and $t \mapsto \xi_t$, respectively.
     
\begin{prop}\label{p.shapederivative}
Let $\Omega=B_+\cup B_-$ with $|\Omega|^{2/d}\lambda< |B|^{2/d}\lambda_2(B)$ with $B$ any ball in $\mathbb R^d$ and $u$ be an eigenfunction corresponding to $\lambda$. Then  
   \[
    \lambda'=-\int_{\partial\Omega}|\nabla u(x)|^2 (W \cdot \nu)(x) \, d\mathcal{H}^{d-1}(x),
    \]
where $W$ is a smooth vector field from $\R^d$ to $\R^d$.
\end{prop}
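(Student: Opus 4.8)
The plan is to prove the two assertions in turn: first the simplicity of $\lambda=\lambda_{1}^{\chi_\alpha}(\Omega)$, which is the structural fact behind everything, and then the Hadamard-type formula for $\lambda'$, which once the eigenelements are known to be differentiable is a short Green's-identity computation.

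\emph{Step 1 (simplicity).} Let $u=u^+-u^-$ be a twisted eigenfunction for $\lambda$; by Lemma~\ref{l.reg} it is $C^\infty(\overline{\Omega})$ and analytic in each ball. Since $|B_+|<|\Omega|$, the hypothesis gives $|B_+|^{2/d}\lambda<|\Omega|^{2/d}\lambda<|B|^{2/d}\lambda_2(B)=|B_+|^{2/d}\lambda_2(B_+)$, hence $\lambda<\lambda_2(B_+)\le\lambda_2(B_-)$; combined with Lemma~\ref{p.ex1} this excludes that both nodal domains of $u$ lie in $B_+$, and Proposition~\ref{p.CH2} ensures $u$ has exactly two nodal domains, so these are precisely the two balls, with $u$ of opposite signs on them. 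Testing \eqref{eq.EL} with functions supported in a single ball, the restriction of $u$ to each ball $B_i$ solves $-\Delta w=\lambda w+c_i\xi$ with $w=0$ on $\partial B_i$, where $c_i\in\{\alpha,1\}$ is the (constant) value of $\chi_\alpha$ on $B_i$ and $\xi$ is as in \eqref{eq.xi}. If $\lambda$ is not a Dirichlet eigenvalue of either ball, each such boundary value problem has a unique solution, so a twisted eigenfunction is determined by its multiplier $\xi$; the map $u\mapsto\xi$ is injective (a zero multiplier would make $u$ a Dirichlet eigenfunction of a ball with eigenvalue $\lambda$, which is excluded), hence the eigenspace is one-dimensional. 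The only alternative is that $\lambda$ equals a Dirichlet eigenvalue of one of the balls; since $\lambda_1(\Omega)\le\lambda<\lambda_2(B_+)$ by \eqref{interwinedTD} and the above, a short check shows $\lambda$ must then be the \emph{first} Dirichlet eigenvalue of that ball, and testing $-\Delta u^\pm=\lambda u^\pm+c_i\xi$ against its single-signed first eigenfunction forces $\xi=0$; thus $u$ is a Dirichlet eigenfunction, which compels $\lambda_1(B_+)=\lambda_1(B_-)$, i.e.\ $|B_+|=|B_-|$ and $\lambda=\lambda_2(\Omega)$. In that configuration the $\lambda$-eigenspace in $H^1_0(\Omega)$ is two-dimensional, but the single constraint $\int_\Omega v\chi_\alpha\,dx=0$ reduces it to one dimension, so $\lambda$ is simple in every case.

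\emph{Step 2 (differentiability of the eigenelements).} I would transport the problem to the fixed domain $\Omega$ via $v_t:=u_t\circ\mathfrak{F}(t)$. The orthogonality constraint $\int_{\Omega_t}u_t\chi_\alpha\,dx=0$ becomes a $C^1$ family of linear conditions $L_t(v_t)=0$ on $H^1_0(\Omega)$ with a Jacobian weight, and the Rayleigh quotient a $C^1$ family of quotients of quadratic forms; parametrizing the closed subspaces $\ker L_t$ by a $C^1$ family of isomorphisms of the fixed Hilbert space $\ker L_0$, $\lambda_t$ is realized as the bottom eigenvalue of a $C^1$ family of self-adjoint operators. Since $\lambda$ is simple by Step~1, the analytic perturbation theory of simple eigenvalues standard in shape optimization (see \cite{h} and \cite{hp}) yields that $t\mapsto\lambda_t$ and $t\mapsto v_t$, hence $t\mapsto u_t$, are differentiable at $0$; differentiating the Euler--Lagrange equation \eqref{eq.EL} then gives differentiability of $t\mapsto\xi_t$. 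Denote by $u'$, $\lambda'$, $\xi'$ the derivatives at $0$, with $u'$ the shape derivative of $u$.

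\emph{Step 3 (the formula).} Differentiating the system \eqref{system.constant} along $\Omega_t=\mathfrak{F}(t)(\Omega)$ at $t=0$, and using that $\chi_\alpha$ does not depend on $t$ (it is fixed on the two half-spaces), the shape derivative satisfies $-\Delta u'=\lambda' u+\lambda u'+\xi'\chi_\alpha$ in $\Omega$ with $u'=-(W\cdot\nu)\,\partial_\nu u$ on $\partial\Omega$, while differentiating the normalizations in \eqref{eq.normalizations} and using $u=0$ on $\partial\Omega$ gives $\int_\Omega u'\chi_\alpha\,dx=0$ and $\int_\Omega u\,u'\,dx=0$. Multiplying the equation for $u'$ by $u$, integrating over $\Omega$, applying Green's identity twice and using $-\Delta u=\lambda u+\xi\chi_\alpha$, the boundary condition $u=0$ on $\partial\Omega$, the two orthogonality relations, and $\int_\Omega u\chi_\alpha\,dx=0$, $\int_\Omega u^2\,dx=1$, all interior terms cancel and there remains
\begin{equation*}
\lambda'=\int_{\partial\Omega}(\partial_\nu u)\,u'\,d\mathcal{H}^{d-1}=-\int_{\partial\Omega}(\partial_\nu u)^2(W\cdot\nu)\,d\mathcal{H}^{d-1}=-\int_{\partial\Omega}|\nabla u|^2(W\cdot\nu)\,d\mathcal{H}^{d-1},
\end{equation*}
where the last equality uses that $\nabla u$ is parallel to $\nu$ on $\partial\Omega$ because $u$ vanishes there. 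I expect the main obstacle to be Step~2: the nonlocal term $\xi\chi_\alpha$ rules out a direct appeal to the classical shape-differentiability results for the Dirichlet Laplacian, so one must reformulate the twisted problem as a perturbed eigenvalue problem on the moving constraint subspace and invoke the simplicity from Step~1; Step~1 is where the hypothesis on $|\Omega|^{2/d}\lambda$ is genuinely used, while Step~3 is then routine.
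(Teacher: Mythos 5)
Your proof is correct, and Steps~2 and 3 coincide with the paper's argument: transport the eigenvalue problem along the deformation, appeal to the standard shape-derivative machinery for simple eigenvalues (the paper cites \cite[Section 5.3]{hp} at the same juncture), and then perform the same formal differentiation of the system and the same Green's-identity computation.

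Step~1 (simplicity), however, is a genuinely different and considerably longer route than the paper's. Both proofs begin identically: the strict hypothesis, Lemma~\ref{p.ex1}, Lemma~\ref{l.reg}(a), and the nodal-domain count force every twisted eigenfunction for $\lambda$ to have nodal domains exactly $B_+$ and $B_-$, one sign on each ball. At that point the paper closes in a single line: if $u$ and $v$ were two $L^2$-orthogonal twisted eigenfunctions, after possibly replacing $v$ by $-v$ they would have the same sign pattern on $(B_+,B_-)$, hence $uv>0$ a.e.\ in $\Omega$ and $\int_\Omega uv\,dx>0$, contradicting orthogonality. You instead branch on whether $\lambda$ is a Dirichlet eigenvalue of one of the balls, parametrize the eigenspace linearly by the Lagrange multiplier $\xi$ in the non-resonant case, and treat the resonant case $|B_+|=|B_-|$ by a dimension count inside the Dirichlet eigenspace after showing $\xi=0$ is forced. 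This works, but the sign argument is cleaner and more robust: once the nodal-domain structure is fixed, $L^2$-orthogonality alone rules out multiplicity, with no need to distinguish the resonant from the non-resonant case or to invoke uniqueness of the boundary-value problem.
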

\begin{proof}
By Theorem~\ref{simpleCourant} the eigenvalue $\lambda$ is simple. As it is often the case, the shape derivative of a simple eigenvalue can be obtained through a formal calculation, which can be rigorously justified \emph{a posteriori} as in \cite[Section 5.3]{hp}. For every sufficiently small $t\in [0,T)$ consider the family of maps $\mathfrak{F}(t)$ introduced prior to the statement of the Proposition. In particular, the function $u_t$ solves
\begin{equation*}
\begin{cases}
    -\Delta u_t= \lambda_t u_t + \xi_t \chi_\alpha & \text{in $\Omega_t$,} \\
    u_t=0 & \text{on $\partial \Omega_t$,}\\
\end{cases}
\end{equation*}
thus by differentiating w.r.t. $t$ the equations in the previous system as well as \eqref{eq.normalizations}, and evaluating at $t=0$, we obtain
\begin{equation}\label{system}
\begin{cases}
-\Delta u'=\lambda'u+\lambda u'+\xi'\chi_\alpha \quad  & \text{in $\oo$,}\\
u'=-|\nabla u|\, W \cdot \nu \quad & \text{on $\partial \Omega$,} \\
\int_\Omega u'(x)\chi_\alpha(x)  dx=0, \\
\int_{\Omega} u(x) u'(x) dx=0.
\end{cases}
\end{equation}
By multiplying the first equation in \eqref{system} by $u$ and integrating over $\Omega$, then performing an integration by parts and using the orthogonality of $u$ to $u'$ and $\chi_\alpha$ (see \eqref{system} and \eqref{system.constant}), we obtain
\[
\int_\Omega \nabla u'(x)\nabla u(x)dx=\lambda'\int_\Omega u(x)^2 dx+\lambda \int_\Omega u'(x)u(x)dx+\xi'\int_\Omega u(x)\chi_\alpha(x)dx= \lambda'.
\]
Another integration by parts, applied to the integral in the left-hand side of the previous equality, yields 
\[
\int_\Omega \nabla u'(x)\nabla u(x)dx=\lambda\int_\Omega u'(x)u(x) dx+\xi \int_\Omega u'(x)\chi_\alpha(x)dx-\int_{\partial \Omega} u'\, (\nabla u\cdot \nu) d\mathcal H^{d-1},
\]
which combined with the boundary and orthogonality conditions satisfied by $u'$ in \eqref{system}, gives the thesis.
\end{proof}

\subsection{Some results on Bessel functions}
In this subsection, we recall some facts about Bessel functions of arbitrary order.
For $\sigma\in \mathbb R$ and $m \in \N$ let $J_\sigma$ denote the \emph{Bessel function of the first kind} of order $\sigma$ (it is an analytic function of real variable, defined for positive values) and $j_{\sigma,m}$ its $m$-th zero. If $\sigma>-1$ by \cite[Section 15.22, p. 479]{w} $j_{\sigma, 1}<j_{\sigma+1,1}$ and moreover the Bessel function $J_{\sigma}$ is positive in $(0,j_{\sigma,1})$. By \cite[Section 3.2, p. 45]{w}, for every $x>0$, the following  \emph{recurrence formulae} hold:
    \begin{equation}\label{recurrence.relation}
        \frac{2\sigma}{x}J_\sigma(x)- J_{\sigma-1}(x)=J_{\sigma+1}(x),
    \end{equation}
and 
    \begin{equation}\label{bessel.derivative}
     (x^{\sigma} J_\sigma(x))'= x^{\sigma} J_{\sigma-1}(x), \quad  (x^{-\sigma} J_\sigma(x))'=- x^{-\sigma} J_{\sigma+1}(x). 
    \end{equation}
In particular, given $R,\omega>0$, from the first formula for the derivative in \eqref{bessel.derivative} we derive the following \emph{integral representation} of Bessel functions
    \begin{equation}\label{bessel.integral}
    \int_0^R r^{\sigma} J_{\sigma-1}(\omega r) \, dr= \frac{R^{\sigma}}{\omega}  J_{\sigma}(\omega R).
\end{equation}

\begin{prop}\label{p.phi}
For $\sigma \ge 1/2$, the analytic function $\phi_{\sigma}\colon (0,j_{\sigma-1,1})\cup (j_{\sigma-1,1},j_{\sigma,1})\to \mathbb R$ defined by
\begin{equation*}
    \phi_\sigma(x):=x^{2\sigma} \frac{J_{\sigma+1}(x)}{J_{\sigma-1}(x)}, \quad \text{for every $x \in (0,j_{\sigma-1,1})\cup (j_{\sigma-1,1},j_{\sigma,1})$},
\end{equation*}
is positive in $(0,j_{\sigma-1,1})$ and negative in $(j_{\sigma-1,1}, j_{\sigma,1})$. Moreover, $\phi_\sigma$ is strictly increasing in  $(0,j_{\sigma-1,1})\cup (j_{\sigma-1,1},j_{\sigma,1})$.
\end{prop}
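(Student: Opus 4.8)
The plan is to reduce both assertions to the single closed-form identity
\[
\phi_\sigma'(x)=2\sigma\,x^{2\sigma-1}\,\frac{J_\sigma(x)^2}{J_{\sigma-1}(x)^2},
\qquad x\in(0,j_{\sigma-1,1})\cup(j_{\sigma-1,1},j_{\sigma,1}),
\]
after which the sign statement and the strict monotonicity are both immediate. The only preliminary input is the classical interlacing of first zeros of Bessel functions of consecutive orders: for $\sigma\ge 1/2$ one has $j_{\sigma-1,1}<j_{\sigma,1}<j_{\sigma+1,1}$ (apply \cite[Section~15.22, p.~479]{w} with orders $\sigma-1$ and $\sigma$) and $j_{\sigma,1}<j_{\sigma-1,2}$ (the first zero of $J_\sigma$ lies between the first two zeros of $J_{\sigma-1}$, again \cite[Section~15.22]{w}); recall also, as already noted in the text, that $J_\mu$ is positive on $(0,j_{\mu,1})$ whenever $\mu>-1$.

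With this, the sign part is bookkeeping. On $(0,j_{\sigma-1,1})$ all three factors $x^{2\sigma}$, $J_{\sigma+1}(x)$ and $J_{\sigma-1}(x)$ are positive (for $J_{\sigma+1}$ because $j_{\sigma-1,1}<j_{\sigma+1,1}$), so $\phi_\sigma>0$; on $(j_{\sigma-1,1},j_{\sigma,1})$ the factor $J_{\sigma-1}(x)$ is negative, since this interval lies between the first two zeros of $J_{\sigma-1}$ (here we use $j_{\sigma,1}<j_{\sigma-1,2}$), while $J_{\sigma+1}(x)>0$ still holds because $j_{\sigma,1}<j_{\sigma+1,1}$, so $\phi_\sigma<0$. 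Note in passing that $J_{\sigma-1}$ does not vanish on either open interval, so $\phi_\sigma$ is well defined and real-analytic there.

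Next I would establish the key identity by a direct differentiation. Writing $\phi_\sigma=x^{2\sigma}J_{\sigma+1}J_{\sigma-1}^{-1}$, the product and quotient rules give
\[
\phi_\sigma'=2\sigma x^{2\sigma-1}\frac{J_{\sigma+1}}{J_{\sigma-1}}
+x^{2\sigma}\,\frac{J_{\sigma+1}'\,J_{\sigma-1}-J_{\sigma+1}\,J_{\sigma-1}'}{J_{\sigma-1}^2}.
\]
From \eqref{bessel.derivative} one extracts $J_{\sigma+1}'=J_\sigma-\frac{\sigma+1}{x}J_{\sigma+1}$ and $J_{\sigma-1}'=\frac{\sigma-1}{x}J_{\sigma-1}-J_\sigma$, whence
\[
J_{\sigma+1}'\,J_{\sigma-1}-J_{\sigma+1}\,J_{\sigma-1}'
=J_\sigma\,(J_{\sigma-1}+J_{\sigma+1})-\frac{2\sigma}{x}\,J_{\sigma-1}\,J_{\sigma+1},
\]
and substituting the recurrence \eqref{recurrence.relation} in the form $J_{\sigma-1}+J_{\sigma+1}=\frac{2\sigma}{x}J_\sigma$ this equals $\frac{2\sigma}{x}\,(J_\sigma^2-J_{\sigma-1}J_{\sigma+1})$. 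Plugging back, the two contributions of $J_{\sigma-1}J_{\sigma+1}$ cancel and one is left with $\phi_\sigma'=2\sigma x^{2\sigma-1}J_\sigma^2/J_{\sigma-1}^2$, as claimed. Since $\sigma\ge 1/2>0$, $x>0$, and on each of the two intervals $J_\sigma$ is nonzero (its smallest positive zero is the excluded endpoint $j_{\sigma,1}$) and $J_{\sigma-1}$ is nonzero, we conclude $\phi_\sigma'>0$, i.e. $\phi_\sigma$ is strictly increasing on $(0,j_{\sigma-1,1})$ and on $(j_{\sigma-1,1},j_{\sigma,1})$.

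I do not expect a genuine obstacle: everything follows from the identity for $\phi_\sigma'$, and the only delicate point is keeping track of which Bessel function is positive, negative, or vanishing on each interval, which the chain $j_{\sigma-1,1}<j_{\sigma,1}<\min\{j_{\sigma+1,1},j_{\sigma-1,2}\}$ settles. If a shorter write-up is preferred, the same identity can be reached by first rewriting $\phi_\sigma(x)=2\sigma x^{2\sigma-1}J_\sigma(x)/J_{\sigma-1}(x)-x^{2\sigma}$ via \eqref{recurrence.relation} and then differentiating the quotient $x^{2\sigma-1}J_\sigma/J_{\sigma-1}$ using \eqref{bessel.derivative}, which yields $x^{2\sigma-1}(1+J_\sigma^2/J_{\sigma-1}^2)$ for that derivative.
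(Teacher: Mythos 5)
Your proof is correct and takes essentially the same approach as the paper: both verify the signs via the interlacing of first zeros of consecutive-order Bessel functions, and both establish strict monotonicity by computing $\phi_\sigma'(x)=2\sigma x^{2\sigma-1}J_\sigma(x)^2/J_{\sigma-1}(x)^2$ from the recurrence \eqref{recurrence.relation} and the derivative formulas \eqref{bessel.derivative}. The only cosmetic difference is that the paper reaches the derivative identity by first rewriting $\phi_\sigma$ as the quotient $x^{\sigma+1}J_{\sigma+1}/(x^{1-\sigma}J_{\sigma-1})$ so that \eqref{bessel.derivative} applies directly to numerator and denominator, whereas you expand $J_{\sigma\pm 1}'$ first and then simplify, and you spell out the zero interlacing that the paper compresses into a citation of Porter's theorem.
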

\begin{proof}
 The sign of $\phi_\sigma$ follows from the signs of the Bessel functions $J_{\sigma+1}$ and $J_{\sigma-1}$ in the domain of definition of $\phi_\sigma$ (see Porter's theorem above).
By  \eqref{bessel.derivative} and \eqref{recurrence.relation} we have 
   \begin{equation*}
    \begin{split}
         \phi_\sigma'(x)
        &=\left(\dfrac{x^{\sigma+1} J_{\sigma+1}(x)}{x^{-\sigma+1} J_{\sigma-1}(x)} \right)'
        = \frac{x^{2}J_{\sigma}(x)(J_{\sigma-1}(x)+J_{\sigma+1}(x))}{x^{-2\sigma+2} J_{\sigma-1}(x)^2}\\
        &=x^{2\sigma} \frac{J_{\sigma}(x)}{J_{\sigma-1}(x)} \left( \frac{J_{\sigma-1}(x)+J_{\sigma+1}(x)}
    {J_{\sigma-1}(x)} \right)=2\sigma x^{2\sigma-1} \frac{J_{\sigma}(x)^2}{J_{\sigma-1}(x)^2},
    \end{split}
\end{equation*}
which implies the positivity of $\phi'_\sigma$ on its domain of definition.
\end{proof}

\begin{prop}\label{p.phi-increasing}
For $\sigma \ge 1/2$, the analytic function $\Phi_\sigma\colon (0,j_{\sigma,1})\to \mathbb R$ defined by 
\begin{equation*}
    \Phi_\sigma(x):=x^{2\sigma} \frac{J_{\sigma+1}(x)}{J_{\sigma}(x)}, \quad \text{for every $x \in (0,j_{\sigma,1})$},
\end{equation*}      
is positive, strictly increasing with
\begin{equation}\label{derivative.Phi}
        \Phi_\sigma'(x)
        =x^{2\sigma} -x^{-1}\Phi_\sigma(x) +x^{-2\sigma}\Phi_\sigma(x)^2, \quad \text{for every $x \in (0,j_{\sigma,1})$}.
    \end{equation} 
  \end{prop}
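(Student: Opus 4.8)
The plan is to prove the three assertions in turn, the first two being routine and the third needing an extra ingredient. Positivity is immediate: on $(0,j_{\sigma,1})$ the function $J_\sigma$ is positive, and since $\sigma\ge 1/2>-1$ we have $j_{\sigma,1}<j_{\sigma+1,1}$, so $J_{\sigma+1}$ is positive on $(0,j_{\sigma,1})$ as well (both facts recalled above from \cite[Section~15.22]{w}); hence $\Phi_\sigma=x^{2\sigma}J_{\sigma+1}/J_\sigma>0$ there.

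For the identity \eqref{derivative.Phi} I would repeat, almost verbatim, the computation in the proof of Proposition~\ref{p.phi}. Using \eqref{bessel.derivative} at orders $\sigma$ and $\sigma+1$ one gets $J_\sigma'=\frac{\sigma}{x}J_\sigma-J_{\sigma+1}$ and $J_{\sigma+1}'=J_\sigma-\frac{\sigma+1}{x}J_{\sigma+1}$; substituting these into the quotient rule for $t:=J_{\sigma+1}/J_\sigma$ gives the Riccati relation $t'=1+t^2-\frac{2\sigma+1}{x}t$. Since $\Phi_\sigma=x^{2\sigma}t$, this yields $\Phi_\sigma'=2\sigma x^{2\sigma-1}t+x^{2\sigma}t'=x^{2\sigma}-x^{2\sigma-1}t+x^{2\sigma}t^2$, and rewriting $x^{2\sigma-1}t=x^{-1}\Phi_\sigma$ and $x^{2\sigma}t^2=x^{-2\sigma}\Phi_\sigma^2$ produces exactly \eqref{derivative.Phi}.

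The remaining point, strict monotonicity, is the delicate one, and \eqref{derivative.Phi} does not settle it by itself: read as a quadratic in $\Phi_\sigma$, its right-hand side has discriminant $x^{-2}(1-4x^2)$, which is positive for $x<1/2$, so near the origin one needs size information on $\Phi_\sigma$. I would supply it through the Mittag-Leffler partial-fraction expansion $J_{\sigma+1}(x)/J_\sigma(x)=\sum_{m\ge 1}2x/(j_{\sigma,m}^2-x^2)$, valid on $(0,j_{\sigma,1})$ (it follows from the classical partial-fraction expansion of $J_\sigma'/J_\sigma$ together with $J_{\sigma+1}=\frac{\sigma}{x}J_\sigma-J_\sigma'$; see, e.g., \cite[Section~15.41]{w}). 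Then $\Phi_\sigma(x)=2\sum_{m\ge 1}x^{2\sigma+1}/(j_{\sigma,m}^2-x^2)$, the series of derivatives converges locally uniformly on $(0,j_{\sigma,1})$ (Weierstrass test, the $m$-th term being $O(j_{\sigma,m}^{-2})$), so termwise differentiation is legitimate and
\[
\Phi_\sigma'(x)=2\sum_{m\ge 1}\frac{x^{2\sigma}\big((2\sigma+1)j_{\sigma,m}^2+(1-2\sigma)x^2\big)}{(j_{\sigma,m}^2-x^2)^2}.
\]
For $x\in(0,j_{\sigma,1})$ one has $0<x<j_{\sigma,1}\le j_{\sigma,m}$ and $2\sigma-1\ge 0$, whence $(2\sigma+1)j_{\sigma,m}^2+(1-2\sigma)x^2\ge(2\sigma+1)j_{\sigma,m}^2-(2\sigma-1)j_{\sigma,m}^2=2j_{\sigma,m}^2>0$; thus every summand is strictly positive and $\Phi_\sigma'>0$ on $(0,j_{\sigma,1})$, i.e.\ $\Phi_\sigma$ is strictly increasing. (Incidentally, this expansion also gives the positivity of $\Phi_\sigma$ directly.)

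The main obstacle is precisely this last step for small $x$: the closed form \eqref{derivative.Phi} already forces $\Phi_\sigma'>0$ for $x\ge 1/2$, but the associated quadratic form is indefinite for $x<1/2$, so one must pass to the finer partial-fraction representation and differentiate it term by term; the positivity of each differentiated summand — which is exactly where the hypothesis $\sigma\ge 1/2$ is used — then closes the argument. Everything else is routine bookkeeping with the recurrences \eqref{recurrence.relation}--\eqref{bessel.derivative}.
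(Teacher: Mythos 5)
Your argument is correct, and it departs from the paper's proof precisely at the one non-trivial point, namely strict monotonicity. The paper proves monotonicity in one line by citing \cite[Lemma~2.4]{ab}: it writes $\Phi_\sigma(x)=x^{2\sigma+1}\cdot\frac{J_{\sigma+1}(x)}{xJ_\sigma(x)}$ and uses the Ashbaugh--Benguria result that the second factor is positive and strictly increasing on $(0,j_{\sigma,1})$, so that $\Phi_\sigma$ is a product of two positive increasing functions. You instead make the argument self-contained by invoking the Mittag--Leffler expansion $J_{\sigma+1}(x)/J_\sigma(x)=\sum_{m\ge1}2x/(j_{\sigma,m}^2-x^2)$ and differentiating term by term, with each term seen to be positive precisely because $\sigma\ge 1/2$ (and $x<j_{\sigma,m}$). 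This is the same toolkit the paper already deploys for Proposition~\ref{p.upsilon}, so your route is both internally consistent with the paper and shows explicitly where the hypothesis $\sigma\ge 1/2$ enters — something the citation hides. Your observation that the Riccati-type identity \eqref{derivative.Phi} alone is insufficient for $x<1/2$ (the quadratic in $\Phi_\sigma$ being indefinite there) is accurate and worth making; the paper does not address this point, since it derives \eqref{derivative.Phi} separately from the monotonicity claim. Your derivation of \eqref{derivative.Phi} via the Riccati relation $t'=1+t^2-\tfrac{2\sigma+1}{x}t$ for $t=J_{\sigma+1}/J_\sigma$ is an equivalent reorganization of the paper's direct computation with \eqref{bessel.derivative}, and the termwise differentiation is justified as you say (the $m$-th derivative term is $O(j_{\sigma,m}^{-2})$ uniformly on compact subsets, and $j_{\sigma,m}\sim m\pi$).
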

\begin{proof}
By the \textit{Mittag-Leffler expansion}, see \cite[Section 15.41, p. 498, equation (1)]{w}, we have
$$\frac{J_{\sigma+1}(x)}{J_{\sigma}(x)}=2x\sum_{m=1}^{\infty} \dfrac{1}{(j_{\sigma,m})^2-x^2}.$$
So, since it is a pointwise convergent sum of positive strictly increasing functions,  the function $J_{\sigma+1}(x)/ J_{\sigma}(x)$ is positive and strictly increasing for every $x \in (0,j_{\sigma,1})$. Therefore, the function $\Phi_\sigma$ is positive and strictly increasing, as it is the product of two positive, strictly increasing functions. Now, by \eqref{bessel.derivative} then 
      \begin{equation*}
       \Phi_\sigma'(x)
       =\left( \dfrac{x^{\sigma+1}J_{\sigma+1}(x)}{x (x^{-\sigma}J_{\sigma}(x))} \right)'
       =x^{2\sigma}-\frac{x^{2\sigma-1}J_{\sigma+1}(x)}{J_{\sigma}(x)}+\frac{x^{2\sigma}J_{\sigma+1}(x)^2}{J_{\sigma}(x)^2},
   \end{equation*}
and also \eqref{derivative.Phi} follows.
\end{proof}

\begin{prop}\label{p.upsilon}
For $\sigma \ge 1/2$, the analytic function $\Upsilon_{\sigma}\colon (0,j_{\sigma,1})\to \mathbb R$ defined by
        \begin{equation*}
            \Upsilon_{\sigma}(x)
            :=x\left( \frac{J_{\sigma+1}(x)}{J_{\sigma}(x)} + \frac{J_{\sigma}(x)}{J_{\sigma+1}(x)} \right)
            =2(\sigma+1) + x\left( \frac{J_{\sigma+1}(x)}{J_{\sigma}(x)} - \frac{J_{\sigma+2}(x)}{J_{\sigma+1}(x)} \right)
        \end{equation*}
        for every $x \in (0,j_{\sigma,1})$, is strictly increasing.
    \end{prop}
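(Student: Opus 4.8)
\textbf{Proof proposal for Proposition~\ref{p.upsilon}.}

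The plan is to show that $\Upsilon_\sigma$ has a positive derivative on $(0,j_{\sigma,1})$. First I would introduce the shorthand $p(x):=xJ_{\sigma+1}(x)/J_\sigma(x)$, which by \cite[Lemma 2.4]{ab} (already invoked in the proof of Proposition~\ref{p.phi-increasing}) is positive and strictly increasing on $(0,j_{\sigma,1})$; note that $\Upsilon_\sigma(x)=p(x)+x^2/p(x)$. Differentiating gives
\begin{equation*}
\Upsilon_\sigma'(x)=p'(x)\Bigl(1-\frac{x^2}{p(x)^2}\Bigr)+\frac{2x}{p(x)}.
\end{equation*}
Since $p'>0$ and $x/p(x)>0$, the only term that could be negative is $-p'(x)x^2/p(x)^2$, so the inequality to beat is $p'(x)(p(x)^2-x^2)+2xp(x)>0$, i.e.\ it suffices to control $p(x)$ from below relative to $x$ and $p'(x)$ from above.

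Next I would compute $p'$ explicitly using the recurrence formulae \eqref{recurrence.relation} and \eqref{bessel.derivative}. Writing $p(x)=x\cdot J_{\sigma+1}(x)/J_\sigma(x)$ and using $(x^{-\sigma}J_\sigma)'=-x^{-\sigma}J_{\sigma+1}$ together with $(x^{\sigma+1}J_{\sigma+1})'=x^{\sigma+1}J_\sigma$, one finds after simplification a Riccati-type identity of the form
\begin{equation*}
p'(x)=1+\frac{2(\sigma+1)}{x}p(x)-\frac{1}{x}p(x)^2 \quad\text{(up to checking the exact constant)},
\end{equation*}
analogous to \eqref{derivative.Phi}; here I would be careful to get the coefficient of the middle term right, since the recurrence relation for $J_{\sigma}$ in terms of $J_{\sigma+1}$ and $J_{\sigma+2}$ introduces the factor $2(\sigma+1)$. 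Substituting this expression for $p'$ into $\Upsilon_\sigma'$ turns the desired inequality into a \emph{polynomial} (in fact rational) inequality in the single quantity $p:=p(x)$ and $x$, of degree at most $3$ or $4$, which should be provable directly once one also uses the elementary bound $0<p(x)<x^2/p(x)$ is \emph{false} in general --- rather one should use that $p(x)\le$ something like $2(\sigma+1)$ or compare with the known monotonicity, so the reduction must be done carefully. Alternatively, the cleaner route is the second expression in the statement: $\Upsilon_\sigma(x)=2(\sigma+1)+\bigl(p(x)-q(x)\bigr)$ where $q(x):=xJ_{\sigma+2}(x)/J_{\sigma+1}(x)$, and then show $p-q$ is increasing, i.e.\ $p'(x)>q'(x)$; since both $p$ and $q$ satisfy the same type of Riccati equation with shifted index, subtracting them yields $p'(x)-q'(x)=\tfrac{2\sigma+3}{x}\bigl(p(x)-q(x)\bigr)\cdot(\text{something})-\tfrac1x\bigl(p(x)^2-q(x)^2\bigr)$, and the sign follows from $p(x)>q(x)>0$ (which is itself a known monotonicity-in-order property of the ratios $xJ_{\nu+1}/J_\nu$, again from \cite[Lemma 2.4]{ab} or Porter-type arguments).

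I expect the main obstacle to be the bookkeeping in establishing the Riccati identities for $p$ and $q$ with the correct index-dependent constants, and then verifying that the resulting algebraic inequality has a definite sign on the whole interval $(0,j_{\sigma,1})$ --- in particular one must ensure no sign change occurs near $x=0$ (where $p(x)\to 0$ and the expansion $p(x)\sim x^2/(2(\sigma+1))$ should make the leading behaviour of $\Upsilon_\sigma'$ transparently positive) and near $x=j_{\sigma,1}$ (where $p(x)\to+\infty$ but $q(x)$ stays bounded, so $p-q\to+\infty$ and one needs the derivative estimate to remain valid). A safe fallback, if the purely algebraic manipulation becomes unwieldy, is to invoke the integral/series representations or the known fact that $x\mapsto xJ_{\nu+1}(x)/J_\nu(x)$ is not only increasing but convex on $(0,j_{\nu,1})$, from which $\Upsilon_\sigma'>0$ would follow more directly; but I would first attempt the self-contained Riccati computation since it parallels exactly the proof of Proposition~\ref{p.phi-increasing}.
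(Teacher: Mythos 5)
Your proposal takes a genuinely different route from the paper: you try to differentiate $\Upsilon_\sigma$ and control the sign of $\Upsilon_\sigma'$ via a Riccati identity for $p(x)=xJ_{\sigma+1}(x)/J_\sigma(x)$, whereas the paper instead starts from the second expression $\Upsilon_\sigma(x)=2(\sigma+1)+x\bigl(J_{\sigma+1}/J_\sigma-J_{\sigma+2}/J_{\sigma+1}\bigr)$ and plugs in the Mittag--Leffler expansion (Watson, Section 15.41), obtaining
\begin{equation*}
\Upsilon_\sigma(x)=2(\sigma+1)+2x^2\sum_{m=1}^\infty\frac{j_{\sigma+1,m}^2-j_{\sigma,m}^2}{\bigl(j_{\sigma,m}^2-x^2\bigr)\bigl(j_{\sigma+1,m}^2-x^2\bigr)},
\end{equation*}
so that every summand is positive and increasing on $(0,j_{\sigma,1})$ and the conclusion is immediate, with no differentiation of Bessel ratios at all.

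There are two concrete problems with the Riccati route as you have set it out. First, the identity you wrote, $p'=1+\tfrac{2(\sigma+1)}{x}p-\tfrac{1}{x}p^2$, has the wrong constant term, the wrong coefficient in the middle term, \emph{and} the wrong sign on the quadratic term. Carrying out the computation carefully (or specializing \eqref{derivative.Phi} via $p(x)=x^{1-2\sigma}\Phi_\sigma(x)$) gives
\begin{equation*}
p'(x)=x-\frac{2\sigma}{x}\,p(x)+\frac{1}{x}\,p(x)^2 .
\end{equation*}
Second, and more seriously, even with the correct identity the method does not close: substituting it into $\Upsilon_\sigma'=p'\bigl(1-x^2/p^2\bigr)+2x/p$ yields
\begin{equation*}
\Upsilon_\sigma'(x)=\frac{1}{x\,p(x)^2}\Bigl(p^4-2\sigma p^3+2(\sigma+1)x^2 p-x^4\Bigr),
\end{equation*}
and the quartic $g(p,x):=p^4-2\sigma p^3+2(\sigma+1)x^2p-x^4$ is \emph{not} nonnegative on the whole positive quadrant (for $p$ small with $x$ fixed it is close to $-x^4$), so its positivity along the curve $p=p(x)$ cannot follow from the Riccati ODE alone. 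Indeed, near $x=0$ the two candidate $x^4$-terms cancel exactly, leaving a delicate $O(x^6)$ remainder whose positivity depends on the second-order expansion of $p$ --- a sign of how tight the inequality is. Your alternative decomposition $p-q$ with $q(x)=xJ_{\sigma+2}(x)/J_{\sigma+1}(x)$ leads to $p'-q'=\tfrac{1}{x}\bigl[(p-q)(p+q-2\sigma)+2q\bigr]$, which is again not manifestly positive when $p+q<2\sigma$ (as happens near $x=0$). So the proposal as written identifies a plausible strategy and correctly flags where the bookkeeping is delicate, but it does not actually carry the proof through, and the obstacle is more than bookkeeping: the algebraic inequality needs extra analytic input beyond the Riccati relation. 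The series approach of the paper sidesteps all of this.
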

    \begin{proof}
    First notice that the last equality in the statement is obtained appealing to \eqref{recurrence.relation}.   By the \textit{Mittag-Leffler expansion}, \cite[Section 15.41, p. 498, equation (1)]{w}, we have
        \begin{equation*}
        \begin{split}
            \frac{J_{\sigma+1}(x)}{J_{\sigma}(x)} - \frac{J_{\sigma+2}(x)}{J_{\sigma+1}(x)}
            &=2x\sum_{m=1}^{\infty} 
            \left( \dfrac{1}{(j_{\sigma,m})^2-x^2}-\dfrac{1}{(j_{\sigma+1,m})^2-x^2} \right)
            \quad \textup{in } (0,j_{\sigma,1}).
        \end{split}
        \end{equation*}
        So it is sufficient to show that the function
        \begin{equation*}
            x \mapsto \sum_{m=1}^{\infty} \dfrac{(j_{\sigma+1,m})^2-(j_{\sigma,m})^2}{[(j_{\sigma,m})^2-x^2][(j_{\sigma+1,m})^2-x^2]}
        \end{equation*}
        is positive and strictly increasing in $(0,j_{\sigma,1})$. Let $n \in \N$, define the function 
        \begin{equation*}
            \theta_n(x):=\frac{1}{[(j_{\sigma,n})^2-x^2][(j_{\sigma+1,n})^2-x^2]} \quad \textup{in } (0,j_{\sigma,1}) \, ,
        \end{equation*}
        differentiating it we obtain
        \begin{equation*}
            \theta'_n(x)=\frac{2x}{[(j_{\sigma,n})^2-x^2]^2[(j_{\sigma+1,n})^2-x^2]}
            + \frac{2x}{[(j_{\sigma,n})^2-x^2][(j_{\sigma+1,n})^2-x^2]^2} > 0.
        \end{equation*}
        Eventually, the sequence of positive strictly increasing functions 
        \begin{equation*}
            \left\{x \mapsto \sum_{m=1}^{n} [(j_{\sigma+1,m})^2-(j_{\sigma,m})^2] \theta_m(x) \right\}_{n \in \N}
        \end{equation*}
        pointwise converge in $(0,j_{\sigma,1})$ to
        \begin{equation*}
            x \mapsto  \sum_{m=1}^{\infty} [(j_{\sigma+1,m})^2-(j_{\sigma,m})^2] \theta_m(x) \, ,
        \end{equation*}
        therefore this function is positive strictly increasing in $(0,j_{\sigma,1})$.
    \end{proof}
    
    \begin{oss}\label{oss.notazione}
    By \eqref{derivative.Phi} for every $x \in (0,j_{\sigma,1})$ there holds
\[
\Upsilon_{\sigma}(x)
            =x\left( \frac{x^{2\sigma}}{\Phi_\sigma(x)} +x^{-2\sigma}\Phi_\sigma(x)\right)
            = \frac{x\Phi_\sigma'(x)}{\Phi_\sigma(x)}+1.
            \]
\end{oss}

\begin{prop}\label{ODE}
    Let $R>0$ and ${\xi} \in \R$. 
    The solutions 
    \begin{equation*}
        (u, \lambda) \in (C^2(0,R) \cap C^1([0,R])) \times \R^+
    \end{equation*}
    of the eigenvalue problem
    \begin{equation*}
    \begin{cases}
        u''(r) + \frac{d-1}{r} u'(r) + \lambda u(r)={\xi} & \text{for } r \in(0,R)\,, \\
        u(R)=0 \, , \ u'(0)=0\, ,
    \end{cases}
\end{equation*}
are obtained by taking $u$ in the family
\begin{equation*}
    \left\{ u_s(r)=s\left( r^{1-\frac{d}{2}}J_{\frac{d}{2}-1}(\sqrt{\lambda} r) - R^{1-\frac{d}{2}}J_{\frac{d}{2}-1}(\sqrt{\lambda} R) \right) \, , \, s \in \R \right\} \, ,
\end{equation*}
and $\lambda$ one of the countably many positive solutions of the equation
\begin{equation}\label{relazione}
    {\xi}= - s \lambda R^{1-\frac{d}{2}} J_{\frac{d}{2}-1}(\sqrt{\lambda} R).
\end{equation}
\end{prop}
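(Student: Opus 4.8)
The plan is to integrate the ordinary differential equation explicitly and then read off the admissible pairs $(u,\lambda)$. Since $\lambda>0$ and the right-hand side $\xi$ is constant, the constant function $r\mapsto\xi/\lambda$ is a particular solution, so it is enough to solve the homogeneous equation $u''+\frac{d-1}{r}u'+\lambda u=0$ on $(0,R)$. This is the radial Helmholtz equation in $\mathbb R^d$: the substitution $u(r)=r^{1-d/2}f(r)$ followed by the rescaling $t=\sqrt\lambda\,r$ reduces it to Bessel's equation of order $\sigma:=d/2-1$, namely $g''(t)+\frac1t g'(t)+\big(1-\sigma^2/t^2\big)g(t)=0$ for $g(t):=f(t/\sqrt\lambda)$. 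Hence the general solution of the inhomogeneous problem on $(0,R)$ is
\[
u(r)=c_1\,r^{1-d/2}J_{\sigma}(\sqrt\lambda\,r)+c_2\,r^{1-d/2}Y_{\sigma}(\sqrt\lambda\,r)+\frac{\xi}{\lambda},
\]
with $c_1,c_2\in\mathbb R$ and $Y_\sigma$ the Bessel function of the second kind of order $\sigma$ (see, e.g., \cite{w}).

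Next I would impose the regularity and the boundary conditions. Since $d\ge2$ we have $\sigma\ge0$, so $r^{1-d/2}Y_{\sigma}(\sqrt\lambda\,r)$ is unbounded as $r\to0^+$ (it behaves like $r^{2-d}$, or like $\log r$ when $d=2$) and therefore cannot belong to $C^1([0,R])$; this forces $c_2=0$. On the other hand $r^{1-d/2}J_{\sigma}(\sqrt\lambda\,r)$ is smooth on $[0,R]$, being a power series in $r^2$, hence has vanishing derivative at the origin; so, once $c_2=0$, the condition $u'(0)=0$ is automatically satisfied (indeed it is forced by $u\in C^1([0,R])$ together with the equation, since a nonzero $u'(0)$ would make $\frac{d-1}{r}u'(r)$ non-integrable near $0$). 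Writing $s:=c_1$ and imposing $u(R)=0$ yields $s\,R^{1-d/2}J_\sigma(\sqrt\lambda\,R)+\xi/\lambda=0$, which is exactly \eqref{relazione}; substituting $\xi/\lambda=-s\,R^{1-d/2}J_\sigma(\sqrt\lambda\,R)$ back into the displayed expression gives $u=u_s$.

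It remains to justify the count. Fixing $s\neq0$ (if $s=0$ then $\xi=0$ and $u\equiv0$) and $\xi$, relation \eqref{relazione} reads $\lambda\,J_\sigma(\sqrt\lambda\,R)=-\xi/(s\,R^{1-d/2})$, and by the large-argument asymptotics $J_\sigma(x)=\sqrt{2/(\pi x)}\,\big(\cos(x-\sigma\pi/2-\pi/4)+O(1/x)\big)$ (see \cite{w}) the left-hand side oscillates as $\lambda\to+\infty$ with amplitude of order $\lambda^{3/4}$; thus it attains the prescribed level infinitely often, while the real-analyticity of $\lambda\mapsto\lambda\,J_\sigma(\sqrt\lambda\,R)$ guarantees that the solution set is discrete, hence countable. (When $\xi=0$ the positive solutions are exactly $\lambda=(j_{\sigma,m}/R)^2$, $m\in\mathbb N$.)

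The reduction to Bessel's equation and the algebra leading to \eqref{relazione} are routine; the step I expect to require the most care is the behaviour at $r=0$: checking that for \emph{every} dimension $d\ge2$ the $Y_\sigma$-branch is genuinely excluded by membership in $C^1([0,R])$, and that the surviving solution is $C^1$ up to the origin with $u'(0)=0$, so that the two-point problem in the statement has precisely the claimed family of solutions.
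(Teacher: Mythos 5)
Your proof is correct and follows essentially the same route as the paper's: reduce to Bessel's equation, write the general solution in terms of $J_\sigma$ and a second-kind solution, exclude the singular branch at the origin, impose $u(R)=0$ to obtain \eqref{relazione}, and argue countability via asymptotics and analyticity. The only cosmetic differences are that you work uniformly with $Y_\sigma$ (rather than splitting into $J_{-n}$ for non-integer order as the paper does) and exclude the second-kind branch by unboundedness of the function itself rather than of its derivative; both are valid.
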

\begin{proof}
    Let $Y_n$ be the \emph{Bessel function of the second kind} of order $n$, see \cite[p.~116]{bow}. By applying \cite[equations (6.80), (6.81) and (6.82)]{bow} (with $\alpha=1-\frac{d}{2}, \gamma=1, n=\frac{d}{2} -1$ and $\beta^2=\lambda$) we deduce that the general solution of the differential equation
    \begin{equation*}
        u''(r) + \frac{2n+1}{r} u'(r) + \lambda u(r)=\xi \, ,
    \end{equation*}
    is given by
    \begin{equation}\label{general.solution}
      u(r)=\begin{cases}
      r^{-n} (a J_n(\sqrt{\lambda} r) + b Y_n(\sqrt{\lambda} r)) + {\xi}/\lambda & \text{if $n \in \Z,$} \\
        r^{-n} (a J_n(\sqrt{\lambda} r) + b J_{-n}(\sqrt{\lambda} r)) + {\xi}/\lambda & \text{if  $n \notin \Z$,}
    \end{cases}
    \end{equation}
for every $a,b \in \R$. 
Since, by \cite[below equation (6.74) p.~116]{bow} and \cite[equations (6.2) and (6.3) p.~87]{bow} we have
    \begin{equation*}
         \lim_{r \rightarrow 0^+} \bigg|\left( r^{-n} Y_n(\sqrt{\lambda} r) \right)'\bigg|
        =+ \infty
        \quad , \quad
        \lim_{r \rightarrow 0^+} \left( r^{-n} J_{n}(\sqrt{\lambda} r) \right)'=0 \quad \text{if $n \in \Z,$}
    \end{equation*}
    and by \cite[equation (6.5) p. 88]{bow} we have
    \begin{equation*}
        \,\,\,\,\,\,\,\,\,\,\,\lim_{r \rightarrow 0^+} \bigg|\left( r^{-n} J_{-n}(\sqrt{\lambda} r) \right)' \bigg| 
        =+ \infty  \quad , \quad
        \lim_{r \rightarrow 0^+} \left( r^{-n} J_{n}(\sqrt{\lambda} r) \right)'=0 
        \quad  \text{if $n \in  \mathbb R^+ \setminus  \mathbb N,$} 
    \end{equation*}
    then from $u'(0)=0$ we obtain $b=0$ in \eqref{general.solution}.
    Thus we can restrict to the class of solutions of the form
    \begin{equation*}
      u(r)= ar^{-n}  J_n(\sqrt{\lambda} r)+ {\xi}/\lambda 
    \end{equation*}
    for $a \in \R$. The condition $u(R)=0$ is equivalent to find the $\sqrt{\lambda}$ that solve 
\begin{equation}\label{homogenous.term}
    \lambda  J_n(\sqrt{\lambda} R)=-\frac{{\xi}}{a}R^n .
\end{equation}
Consider the function $f \colon \R^+ \rightarrow \R$ defined by $f(\lambda):=\lambda J_n(\sqrt{\lambda} R)$ and since by truncating \emph{Hankel’s expansion}\footnote{This particular formula was first shown by Jacobi, see \cite[Section 7.1, p. 195]{w}.}, see \cite[Section 7.1, p. 195]{w}, we have
    \begin{equation*}
        J_n (x) \sim \sqrt{\frac{2}{\pi x}} \cos \Big( x - \frac{2 n +1}{4} \pi \Big) + O \Big( \frac{1}{x^{3/2}} \Big) \quad \text{as $x \rightarrow \infty$} \, ,
    \end{equation*}
there exist two strictly increasing sequences of positive numbers $\lambda_n^+$ and $\lambda_n^-$ such that
\begin{equation*}
    \lim_{n \rightarrow +\infty} f(\lambda_n^+)=+\infty
    \quad , \quad
    \lim_{n \rightarrow +\infty} f(\lambda_n^-)=-\infty.
\end{equation*}
Since $f$ is an analytic function of real variable we have $\textup{Im}(f)=\R$, moreover $f^{-1}(-{\xi}R^n/a)$ has countably many elements and \eqref{homogenous.term} can be rewritten as 
          ${{\xi}}=-a{\lambda}R^{-n}J_n(\sqrt{\lambda} R)$,
that is the last relation.
\end{proof}

\medskip

\noindent{\textbf{Acknowledgements.}}
The authors are grateful to Mark S. Ashbaugh, for sharing the valuable historical background  regarding the Hong-Krahn-Szego inequality, to Fabio Nicola and Diego Noja, for providing  physical evidence supporting the twisted eigenvalues (in Quantum Mechanics and in stability theory of dispersive equations, respectively), and to Pierre Bérard, for providing feedback on the results related to nodal domains of twisted eigenfunctions. The authors are also very grateful to Dorin Bucur for stimulating discussions and for suggesting an approch to proving the nodal domain theorem in Theorem~\ref{simpleCourant}. Two anonymous referees are also acknowledged for important comments on a preliminary version of the paper. The figures in the paper have been obtained using the software MATLAB.
This publication is part of the project PNRR-NGEU which has received funding from the MUR – DM 351/2022. Both the authors are members of the Research Group INdAM–GNAMPA. The second author has been partially supported by the PRIN 2022 project 2022R537CS \emph{NO3
- Nodal Optimization, NOnlinear elliptic equations, NOnlocal geometric problems, with a focus on regularity}, founded by the European Union - Next Generation EU.

\end{document}